\numberwithin{equation}{section}
\theoremstyle{plain}
\newtheorem{thm}{Theorem}[section]
\newtheorem{lem}[thm]{Lemma}
\newtheorem{cor}[thm]{Corollary}
\newtheorem{prop}[thm]{Proposition}
\theoremstyle{definition}
\newtheorem{defn}[thm]{Definition}
\newtheorem{ex}[thm]{Example}
\newtheorem{question}[thm]{Question}
\newtheorem{rmk}[thm]{Remark}
\newtheorem{rmks}[thm]{Remarks}
\newcommand{\surj}{\twoheadrightarrow}
\newcommand{\im}{{\rm im}}
\newcommand{\sC}{{\mathcal C}}
\newcommand{\sE}{{\mathcal E}}
\newcommand{\sL}{{\mathcal L}}
\newcommand{\sM}{{\mathcal M}}
\newcommand{\sR}{{\mathcal R}}
\newcommand{\sS}{{\mathcal S}}
\newcommand{\F}{{\mathbb F}}
\renewcommand{\H}{{\mathbb H}}
\newcommand{\Q}{{\mathbb Q}}
\newcommand{\U}{{\mathbb U}}
\newcommand{\Z}{{\mathbb Z}}
\def\NDT{{\fontencoding{T5}\selectfont Nguy\~ \ecircumflex n Duy T\^an}}
\begin{document}
\title{Counting Galois $\U_4(\F_p)$-extensions using Massey products} 

 \author{ J\'an Min\'a\v{c} and \NDT}
\address{Department of Mathematics, Western University, London, Ontario, Canada N6A 5B7}
\email{minac@uwo.ca}
 \address{Department of Mathematics, Western University, London, Ontario, Canada N6A 5B7 and Institute of Mathematics, Vietnam Academy of Science and Technology, 18 Hoang Quoc Viet, 10307, Hanoi - Vietnam } 
\email{duytan@math.ac.vn}
\thanks{JM is partially supported  by the Natural Sciences and Engineering Research Council of Canada (NSERC) grant R0370A01. NDT is partially supported  by the National Foundation for Science and Technology Development (NAFOSTED)}
 \begin{abstract}
We use Massey products and their relations to unipotent representations to parametrize and find an explicit formula for the number of Galois extensions of a given local field with the prescribed Galois group $\U_4(\F_p)$ consisting of unipotent four by four matrices over $\F_p$. 
Further applications of this method involve the counting of certain Galois extensions with restricted ramifications, and counting the numbers of Galois $\U_4(\F_p)$-extensions of some other fields. 
For each Demushkin pro-$p$-group, we find a very simple version of the condition when the $n$-fold Massey product of one-dimensional cohomological elements of $G$ with coefficients in $\F_p$, is defined.  As an easy consequence, we determine those $\U_n(\F_p)$ which occur as an epimorphic image of any given Demushkin group.
\end{abstract} 
\maketitle
\section{Introduction}
Recently, methods of Massey products arriving from several rather distinct entry points, have invaded the classical Galois theory of $p$-extensions. 
This development already resulted in new powerful techniques and results. In our paper we concentrate on the fundamental question of counting the number of certain Galois extensions over several significant base fields including
local fields and algebraic  number fields. Therefore our work is both influenced and contributes  further to the development discussed in the basic works of H. Koch, J. Labute and D. Vogel. (See \cite{La},\cite{Ko},\cite{Vo,Vo2}.)

In this introduction we merely point out some of these entry points, and explain how they have led us to a new way of counting the  number of specific Galois extensions. 
Massey products originated in topology, in an effort to produce finer topological invariants than those which existed before. (See \cite{Ma}.)
In the middle of the 1970s it was  recognized that the "non-vanishing of Massey products" could be viewed as an obstruction to the determination of the homotopy type of some topological spaces from their cohomological rings. (See \cite{DGMS}.) At the same time, a crucial link  between unipotent representations of groups, and a certain vanishing of the Massey product, was discovered. (See \cite{Dwy}.)  

In \cite{HW}, M. J. Hopkins and K. G.  Wickelgren proved that all triple Massey products with respect to $\F_2$, vanish over all global fields of characteristic not 2. In \cite{MT1} we proved that this is true for all fields $F$. We use this result to provide strong restrictions on the possible relations in Galois groups $G_F(2)$.
In \cite{MSp} some results of F. R. Villegas were used for the case $p=2$ to describe the quotient of the Galois group $G_F(2)$ of the quadratic closure of a field $F$ of ${\rm char}(F)\not=2$. These results were extended to all primes $p$ in \cite{EM1} and \cite{EM2} for both $p$-descending and Zassenhaus filtrations. In \cite{Ef} a beautiful, unifying description via unipotent representations in $\U_3(\F_p)$ of these results in \cite{MSp} and \cite{EM2} was obtained, and at the same time some important results on $n$-unipotent representations were obtained for all $n\geq 3$.  Influenced by these results and also by further considerations, 
we formulated a conjecture on the vanishing of $n$-fold Massey products and the kernel unipotent conjecture related to the maximal pro-$p$-quotients of absolute Galois groups of fields containing a primitive $p$th root of unity. These possible conditions on absolute Galois groups are rather strong. We verified the kernel unipotent conjecture for rigid fields when $p>2$ (see \cite{MTE}). The vanishing of 3-Massey product conjecture was recently verified in \cite{Mat}, \cite{EMa} \cite{MT3}, and \cite{MT4}.

In the work concerning  these two conjectures, it turned out that it was important to understand the Galois extensions of a given field with Galois group $\U_n(\F_p)$. (Recall that $\U_n(\F_p)$ is the group of all upper-triangular unipotent $n$-by-$n$ matrices with entries in the finite field $\F_p$.) Moreover we would like to parametrize and count these extensions via techniques of Massey products. The first important non-trivial case not yet solved in the literature, is the case $n=4$  and $F$ local fields. We solve this problem here. We further apply these techniques to other fields as well. We also found out that we can considerably relax the condition on defining  $n$-fold Massey products for any $n\geq 2$ and for any Demushkin group. The statement that this relaxed condition is sufficient to define $n$-fold Massey products seems rather strong, and we point out a relatively simple example when this condition is satisfied, and yet the $n$-fold Massey product is not defined. Nevertheless there is the possibility that this relaxed condition is always sufficient if our pro-$p$-group is the maximal pro-$p$-quotient  of an absolute Galois group of a field $F$ containing a primitive $p$th root of unity. 
This may lead to a new restriction of considerable strength, on the shape of of such profinite groups.
Now we shall turn to the details of our paper.

Let $K$ be a local field of characteristic $0$, i.e., $K$ is a finite extension of the field $\Q_p$ of $p$-adic numbers. It is well-known that $K$ has only  finitely many algebraic extensions (inside a fixed algebraic closure of $K$) with given degree. In particular, the number of Galois extensions of $K$ with prescribed finite Galois group $G$, is finite. We  follow \cite{Ya} to denote this number by $\nu(K,G)$. Finding  $\nu(K,G)$ for various finite $p$-groups is a classical problem. 
Assume that $G$ is a finite $p$-group. If $K$ does not contain a primitive $p$th root of unity, then Shafarevich \cite{Sha} proved that the Galois group $G_K(p)$ of the maximal $p$-extension of $K$, is a free pro-$p$-group of rank $n+1$, where $n=[K:\Q_p]$. He also provided an explicit formula for $\nu(K,G)$ in this case. If $K$ does contain a primitive $p$th root of unity, then the group $G_K(p)$ is a Demushkin group. Demushkin groups are completely classified by S.P. Demushkin, J.-P. Serre, and J. Labute (see \cite{La}). In \cite{Ya}, the author provided a general formula for $\nu(K,G)$ in this case using the classification of Demushkin groups, complex character theory of finite groups, and the M\"{o}bius inversion formula. Theoretically, if we know the character table of G and those of its subgroups, then we can determine $\nu(K,G)$. However in practice, it is not easy to do so. In \cite{Ya} the author applied this general formula for some special $p$-groups, those having quite simple character theory, e.g., groups of order $p^3$,  dihedral and  generalized quaternion groups of 2-power orders. In particular for the group $G=\U_3(\F_p)$, he recovered the result of R. Massy and T. Nguyen-Quang-Do \cite{MNg} when $p>2$, and of C. Jensen and N. Yui \cite{JY} when $p=2$. 

In this paper we shall provide an explicit formula for $\nu(K,\U_4(\F_p))$.  For example, $\Q_2$ has exactly 16 Galois extensions with the Galois group $\U_4(\F_2)$. It seems that it is technically difficult to find $\nu(K,\U_4(\F_p))$ using the above-mentioned group character theory approach. Here we use Massey products, more precisely triple Massey products, in computing $\nu(K,\U_4(\F_p))$. By the work \cite{Dwy}, triple Massey products  appear  naturally in this problem. Every surjective homomorphism from $G_K(p)$ to $\U_4(\F_p)$ determines a defined triple Massey product which in fact contains 0.  We can think of the set ${\rm Epi}(G_K(p),\U_4(\F_p))$ of surjective homomorphisms from $G_K(p)$ to $\U_4(\F_p)$ as a fiber space above the base space ${\rm TMP}(G_K(p),\F_p)$ consisting of defined triple Massey products containing 0. Then the problem of determining  ${\rm Epi}(G_K(p),\U_4(\F_p))$ is  reduced to two problems: determining the base space   ${\rm TMP}(G_K(p),\F_p)$ and determining the fibers. Using a general result in embedding problems, we can describe each fiber of this fiber space in  cohomological terms. (See  Section~\ref{sec:Massey} for more precise discussions.) The problem of determining ${\rm TMP}(G_K(p),\F_p)$ is in fact a problem in linear algebra (see Lemma~\ref{lem:LA}).
We would also like to note that using cup products instead of triple Massey products, the same method as above can be applied to calculate $|\nu(K,\U_3(\F_p))|$ (see Remark~\ref{rmk:CP}). In Subsection 2.4 we illustrate our method on counting the number of Galois extensions with restricted ramification.
 
The structure of our paper is as follows. In Section 2 we review Massey products and embedding problems. We provide a general parametrization for the set ${\rm Epi}(G,\F_p)$ in  Proposition~\ref{prop:counting Epi}. In Subsection 2.4  we  specialize this parametrization to obtain Proposition~ \ref{prop:R small}, when $G=S/R$ with $S$ free pro-$p$-group and $R\subseteq S_{(3)}$. 
Further  we develop a new  technique which we illustrate by  examples for counting the number of $\U_4(\F_2)$-Galois extensions of $\Q$ with restricted ramification. In Section 3 we prove the main results of this paper. These results are  Theorem~\ref{thm:counting U4 Demushkin} and Theorem~\ref{thm: local U4-extensions}. In Section 4 we discuss the question when ${\rm Epi}(G,\U_n(\F_p))$ is non-empty, where $G$ is  a Demushkin group. We answer this question as an easy corollary of Proposition~\ref{prop: Demushkin n-Massey}. The latter shows that we can relax the   defining condition for a $n$-fold Massey product for a Demushkin group.  We also show in Proposition~\ref{prop:free product} that  two pro-$p$-groups satisfy this relaxed condition on defining Massey products if  and only if their free product satisfies this condition.
In the last section, we find a formula for $|{\rm Epi}(G,\U_4(\F_p))|$ when $G$ is a free product of a Demushkin group and a free pro-$p$-group; or  $G$ is a free product of two Demushkin groups with odd $q$-invariants.
\\
\\
{\bf Acknowledgement: } We would like to thank Danny Neftin and Michael Rogelstad for their interest and encouragement related to this work. We are  grateful to an  anonymous referee for his/her  comments and valuable suggestions which we used to improve our exposition.
\\
\\
{\bf Notation:} For two profinite groups $G$ and $H$, we denote by ${\rm Epi}(G,H)$ the set consisting of continuous surjective homomorphisms from $G$ to $H$. (All homomorphisms of profinite groups considered in this paper are assumed to be continuous.) 

For a prime number $p$ and a field $F$, we denote by $G_F(p)$ the maximal pro-$p$-quotient of an absolute Galois group $G_F$ of $F$. If $E/F$ is a Galois extension with Galois group isomorphic to $G$, then we say that $E/F$ is a $G$-extension.

For a profinite group $G$ and a prime number $p$,  the Zassenhaus ($p$-)filtration $(G_{(n)})$ of $G$ is defined inductively by
\[
G_{(1)}=G, \quad G_{(n)}=G_{(\lceil n/p\rceil)}^p\prod_{i+j=n}[G_{(i)},G_{(j)}],
\]
where $\lceil n/p \rceil$ is the least integer which is greater than or equal to $n/p$. (Here for two closed subgroups $H$ and $K$ of $G$,  $[H,K]$ means the smallest closed subgroup of $G$ containing the  commutators $[x,y]=x^{-1}y^{-1}xy$, $x\in H, y\in K$. Similarly, $H^p$ means the smallest closed subgroup of $G$ containing  the $p$-th powers $x^p$, $x\in H$.)
\section{Massey products and embedding problems}
\label{sec:Massey}
\subsection{Massey products}
Let $G$ be a profinite group and $p$ a prime number. We consider the finite field $\F_p$ as  a trivial discrete $G$-module. Let $\sC^\bullet=(\sC^\bullet(G,\F_p),\delta,\cup)$ be the differential graded algebra of inhomogeneous continuous cochains of $G$ with coefficients in $\F_p$ \cite[Ch.\ I, \S2]{NSW}. We write $H^i(G,\F_p)$ for the corresponding cohomology groups. We denote by $Z^1(G,\F_p)$ the subgroup of $C^1(G,\F_p)$ consisting of all 1-cocycles. Because we use  trivial action on the coefficients $\F_p$, $Z^1(G,\F_p)=H^1(G,\F_p)={\rm Hom}(G,\F_p)$.
In this section we review  Massey products in $H^\bullet(G,\F_p)$ and their relations to certain types of embedding problems, which will be needed in the sequel.  (See \cite{MT1,MT2} and references therein for more general setups.)

Let $n\geq 3$ be an integer. Let $a_1,\ldots,a_n$ be elements in $H^1(G,\F_p)=Z^1(G,\F_p)\subseteq C^1(G,\F_p)$.
\begin{defn}
\label{defn:Massey product}
 A collection $\sM=\{a_{ij}\mid 1\leq i<j\leq n+1, (i,j)\not=(1,n+1)\}$ of elements $a_{ij}$ of $\sC^1(G,\F_p)$ is called a {\it defining system} for the {\it $n$-fold Massey product} $\langle a_1,\ldots,a_n\rangle$ if the following conditions are fulfilled:
\begin{enumerate}
\item $a_{i,i+1}=a_i$ for all $i=1,2\ldots,n$.
\item $\delta a_{ij}= \sum_{l=i+1}^{j-1} a_{il}\cup a_{lj}$ for  all $i+1<j$.
\end{enumerate}
Then $\sum_{k=2}^{n} a_{1k}\cup a_{k,n+1}$ is a $2$-cocycle.
Its  cohomology class in $H^2$  is called the {\it value} of the product relative to the defining system $\sM$,
and is denoted by $\langle a_1,\ldots,a_n\rangle_\sM$.
The product $\langle a_1,\ldots, a_n\rangle$ itself is the subset of $H^2(G,\F_p)$ consisting of all  elements which can be written in the form $\langle a_1,\ldots, a_n\rangle_\sM$ for some defining system $\sM$.

When $n=3$ we will speak about a {\it triple} Massey product. Note that in this case the triple Massey  product $\langle a_1,a_2,a_3\rangle$ is defined if and only if $a_1\cup a_2=a_2\cup a_3=0$ in $H^2(G,\F_p)$.
\end{defn}
\begin{rmk}
\label{rmk:Dwyer}
As observed by Dwyer \cite{Dwy} in the discrete context (see also \cite[\S8]{Ef} in the profinite case), defining systems for Massey products can be interpreted in terms of upper-triangular unipotent representations of $G$, as follows.

Let $\U_{n+1}(\F_p)$ be the group of all upper-triangular unipotent $(n+1)\times(n+1)$-matrices with entries in  $\F_p$. Let $Z$ be the subgroup of all such matrices with all off-diagonal entries being $0$ except at position $(1,n+1)$. We may identify $\U_{n+1}(\F_p)/Z$ with the group $\bar\U_{n+1}(\F_p)$ of all upper-triangular unipotent $(n+1)\times(n+1)$-matrices with entries over $\F_p$ with the $(1,n+1)$-entry omitted.
For any representation $\rho\colon G\to \U_{n+1}(\F_p)$ and $1\leq i< j\leq n+1$, let $\rho_{ij}\colon G\to \F_p$ be the composition of $\rho$ with the projection from $\U_{n+1}(\F_p)$ to its $(i,j)$-coordinate. We use  similar notation for representations $\bar\rho\colon G\to\bar\U_{n+1}(\F_p)$. Note that for each $i=1,\ldots,n$, $\rho_{i,i+1}$ (resp., $\bar\rho_{i,i+1}$) is a group homomorphism.

Assume that $\sM=\{a_{ij}\mid 1\leq i<j\leq n+1, (i,j)\not=(1,n+1)\}$ is a defining system for an $n$-fold Massey product $\langle a_1,\ldots,a_n\rangle$. We define a map $\bar{\rho}_{\sM}\colon G\to \bar{\U}_{n+1}(\F_p)$ by $(\bar\rho_\sM)_{ij}=-a_{ij}$.  Then one can check that $\bar{\rho}_\sM$ is a (continuous) group homomorphism. Moreover, $\langle a_1,\ldots,a_n\rangle_\sM=0$ if and only if $\bar{\rho}_\sM$ can be lifted to a group homomorphism $G\to \U_{n+1}(\F_p)$.
On the other hand, if  $\bar{\rho}\colon G \to \bar{\U}_{n+1}(\F_p)$ is a group homomorphism, then $\{-\bar{\rho}_{ij}\mid 1\leq i<j\leq n+1, (i,j)\not=(1,n+1)\}$ is a defining system for $\langle -\bar{\rho}_{12},\ldots,-\bar{\rho}_{n,n+1}\rangle$. (See \cite[Theorem 2.4]{Dwy}.) 
\qed
\end{rmk}

\begin{defn}
 We say that $G$ has the {\it vanishing triple Massey product property (with respect to $\F_p$)} if every defined triple Massey product $\langle a_1,a_2, a_3\rangle$, where $ a_1,a_2, a_3\in H^1(G,\F_p)$, necessarily contains $0$.
\end{defn}

\subsection{Embedding problems} A {\it weak embedding problem} $\sE$ for a profinite group $G$ is a diagram 
\[
\sE:=
\xymatrix
{
{} & G \ar[d]^{\varphi}\\
U \ar[r]^f & \bar U
}
\] 
which consists of  profinite groups $U$ and $\bar U$ and homomorphisms $\varphi \colon G\to \bar U$, $f\colon U\to \bar U$ with $f$ being surjective.  If in addition $\varphi$ is also surjective, we call $\sE$ an {\it embedding problem}. 

A {\it weak solution} of $\sE$ is  a homomorphism $\psi\colon G\to U$ such that $f\psi=\varphi$.  
 We call $\sE$ a {\it finite} weak embedding problem if  group $U$ is finite. The {\it kernel} of $\sE$ is defined to be $M:=\ker(f)$.  
We denote by ${\rm Sol}(\sE)$ the set of weak solutions of $\sE$. 

Assume now that the kernel $M$ is abelian. The conjugation action of $U$ on $M$ is trivial when we restrict this action to $M\subseteq U$. Hence this induces an $\bar{U}$-module structure on $M$. We consider $M$ as a $G$-module via $\varphi$ and the conjugation action of $\bar{U}$ on $M$. We denote by $M_\varphi$ this $G$-module.
The following result is well-known.

\begin{lem}
\label{lem:Sol}
Let $\sE(G,f,\varphi)$ be a weak embedding problem with finite abelian kernel $M$ which has a weak solution. Then ${\rm Sol}(\sE)$ is a principal homogeneous space over the group of 1-cocycles $Z^1(G,M_\varphi)$.

In particular, any weak solution $\theta$ of $\sE$ induces a bijection 
\[
{\rm Sol}(\sE) \simeq Z^1(G,M_\varphi).
\]
\end{lem}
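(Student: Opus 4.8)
The plan is to exhibit an explicit simply transitive action of the group $Z^1(G,M_\varphi)$ on the set $\mathrm{Sol}(\sE)$, from which both assertions follow immediately (the second being the standard fact that a principal homogeneous space under a group becomes isomorphic to that group once a base point is chosen). First I would fix notation: given a weak solution $\psi\colon G\to U$ and a $1$-cocycle $z\in Z^1(G,M_\varphi)$, define a new map $z\cdot\psi\colon G\to U$ by $(z\cdot\psi)(g)=z(g)\,\psi(g)$, where we use that $M\subseteq U$ to make sense of the product and that $M$ is abelian and normal in $U$. The first step is to check that $z\cdot\psi$ is again a homomorphism: expanding $(z\cdot\psi)(gh)=z(gh)\psi(gh)$ and comparing with $(z\cdot\psi)(g)(z\cdot\psi)(h)=z(g)\psi(g)z(h)\psi(h)$, one rewrites $\psi(g)z(h)\psi(g)^{-1}={}^{g}z(h)$ (this is precisely the $G$-action on $M_\varphi$, induced by $\varphi=f\psi$ and conjugation, so it only depends on $\varphi$, not on the choice of lift $\psi$), and the homomorphism property reduces exactly to the cocycle identity $z(gh)=z(g)\cdot{}^{g}z(h)$. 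The second step is to observe $f(z\cdot\psi)=\varphi$, since $z$ takes values in $M=\ker f$; hence $z\cdot\psi\in\mathrm{Sol}(\sE)$.

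Next I would verify that this is a group action: $(z_1z_2)\cdot\psi=z_1\cdot(z_2\cdot\psi)$ and $1\cdot\psi=\psi$ are immediate from the definition and commutativity of $M$. Then comes the heart of the argument, transitivity and freeness. Given two weak solutions $\psi,\psi'$, define $z\colon G\to U$ by $z(g)=\psi'(g)\psi(g)^{-1}$. Because $f\psi'=f\psi=\varphi$, we get $f(z(g))=1$, so $z$ takes values in $M$; and the computation that $z$ is a cocycle is the same identity as above, run in reverse, using that $\psi,\psi'$ are homomorphisms. This shows the action is transitive, with $z\cdot\psi=\psi'$. For freeness, if $z\cdot\psi=\psi$ then $z(g)=1$ for all $g$, so the stabilizer of any point is trivial. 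Combining, $\mathrm{Sol}(\sE)$ is a principal homogeneous space over $Z^1(G,M_\varphi)$, and fixing any $\theta\in\mathrm{Sol}(\sE)$ (which exists by hypothesis) the map $z\mapsto z\cdot\theta$ is the asserted bijection $\mathrm{Sol}(\sE)\xrightarrow{\sim} Z^1(G,M_\varphi)$, or rather its inverse; one may instead send $\psi\mapsto \psi\cdot\theta^{-1}$ in the pointwise sense to land in $Z^1$.

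The only genuinely delicate point — and the one I would be most careful about — is the bookkeeping around the module structure: one must check that the conjugation action $\psi(g)(-)\psi(g)^{-1}$ on $M$ genuinely factors through $\varphi$ (equivalently through $\bar U$), so that the object $M_\varphi$ is well-defined independently of which weak solution is used, and that this is the same action appearing in $Z^1(G,M_\varphi)$. This is exactly what makes the cocycle identity match the homomorphism identity; everything else is formal manipulation with the semidirect-product-like structure of the preimage $f^{-1}(\varphi(G))\subseteq U$. Since $M$ is central in its own preimage and $U$ acts on it through $\bar U$, this is routine, but it is the step where a sign or a left/right convention error would propagate. Continuity is not an issue since all maps in sight are continuous by the standing assumption and $M$ is finite.
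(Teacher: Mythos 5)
Your proof is correct: the twisting action $(z\cdot\psi)(g)=z(g)\psi(g)$, the verification that differences of two weak solutions give $1$-cocycles for $M_\varphi$, and the simple transitivity are exactly the standard argument that the paper delegates to the cited reference (\cite{NSW}, proof of 3.5.11), so you are taking essentially the same route, just written out in full. The only quibble is the aside that ``$M$ is central in its own preimage,'' which is not needed (and, read as centrality in $f^{-1}(\varphi(G))$, need not hold); what your computation actually uses is only that conjugation by $\psi(g)$ on $M$ coincides with the $G$-action defining $M_\varphi$, which you do establish.
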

\begin{proof} See \cite[Proof of 3.5.11]{NSW}.
\end{proof}
\begin{ex}
\label{ex:Z1 free}
  Let $\sE(G,f\colon U\to \bar{U},\varphi\colon G\to \bar{U})$ be a weak embedding problem with finite abelian kernel $M$. Assume that $G$ is a free pro-$p$-group on generators $x_1,\ldots,x_d$,  and $U$ is a finite $p$-group. Then $|Z^1(G,M_\varphi)|=|M|^d$. (Observe that this means that one-cocycles can be arbitrarily prescribed on generators.)   Indeed, $\sE$ has a weak solution, as $G$ is free. Hence  by Lemma~\ref{lem:Sol} we have $|Z^1(G,M_\varphi)|=|{\rm Sol}(\sE)|$. A homomorphism $\psi\colon G\to U$ is an element in ${\rm Sol}(\sE)$ if and only if 
\[
\psi(x_i)\bmod M =\varphi(x_i), \quad \forall 1\leq i\leq d.
\]
Hence the number of such $\psi$'s is $|M|^d$. Therefore, $|Z^1(G,M_\varphi)|=|{\rm Sol}(\sE)|=|M|^d$.
\end{ex}
\subsection{Epimorphisms to $\U_4(\F_p)$}
We first have the following lemma, which will be useful later.
\begin{lem}
\label{lem:linear independence} Let $G$ be a pro-$p$-group.
 Let $\chi_1,\ldots,\chi_n$ be elements in $H^1(G,\F_p)$. Then the homomorphism \[\varphi:=(\chi_1,\ldots,\chi_n)\colon G\to \F_p\times\cdots\times \F_p\] is surjective if and only if $\chi_1,\ldots,\chi_n$ are $\F_p$-linearly independent in $H^1(G,\F_p)$.
\end{lem}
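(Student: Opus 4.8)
The plan is to prove both directions by translating surjectivity of $\varphi$ onto the elementary abelian $p$-group $\F_p^n$ into a statement about the induced map on Frattini quotients. Write $V = \F_p^n$; since $V$ is abelian of exponent $p$, its Frattini quotient is $V$ itself, and a homomorphism $G \to V$ is surjective if and only if the induced map $G/\Phi(G) \to V$ is surjective, where $\Phi(G) = G^p[G,G]$ is the Frattini subgroup of $G$. So the first step is to reduce everything to linear algebra over $\F_p$ for the map $\bar\varphi \colon G/\Phi(G) \to V$.

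Next I would identify the target of this linear-algebraic picture dually. Each $\chi_i \in H^1(G,\F_p) = \Hom(G,\F_p)$ factors through $G/\Phi(G)$ (since $\F_p$ is abelian of exponent $p$), giving $\bar\chi_i \colon G/\Phi(G) \to \F_p$, and $H^1(G,\F_p)$ is naturally identified with $(G/\Phi(G))^\vee$, the $\F_p$-dual of the Frattini quotient. Under this identification $\bar\varphi = (\bar\chi_1,\ldots,\bar\chi_n)$, so $\bar\varphi$ is surjective onto $\F_p^n$ precisely when the $\bar\chi_i$ span a subspace of $(G/\Phi(G))^\vee$ of dimension $n$, i.e.\ are linearly independent. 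The key step is then the standard duality fact: for a vector space $W$ over a field and functionals $\bar\chi_1,\ldots,\bar\chi_n \in W^\vee$, the map $(\bar\chi_1,\ldots,\bar\chi_n)\colon W \to \F_p^n$ is surjective if and only if the $\bar\chi_i$ are linearly independent. One direction is immediate (a linear dependence among the $\bar\chi_i$ forces the image to lie in a proper subspace of $\F_p^n$); the other follows because the kernel of $(\bar\chi_1,\ldots,\bar\chi_n)$ has codimension equal to $\dim \sum \F_p\bar\chi_i$, so the image has dimension $n$ exactly when that span is $n$-dimensional.

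The only subtlety—and the step I'd flag as the one needing care rather than being a genuine obstacle—is making sure the reduction to the Frattini quotient is legitimate in the profinite setting: I would invoke that for a pro-$p$-group $G$, a continuous homomorphism to a finite elementary abelian $p$-group factors through the (closed) Frattini subgroup $\Phi(G) = \overline{G^p[G,G]}$, and that $H^1(G,\F_p) = \Hom(G/\Phi(G),\F_p)$ as topological groups, with $G/\Phi(G)$ a (possibly infinite-dimensional) $\F_p$-vector space carrying the discrete-dual pairing with $H^1$. Once this identification is in place, a continuous surjection $G \to \F_p^n$ corresponds to a surjection $G/\Phi(G) \to \F_p^n$ of $\F_p$-vector spaces, and the duality argument above applies verbatim. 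I do not expect any real difficulty here; the proof is essentially a bookkeeping exercise once the Frattini-quotient dictionary is set up, and the whole argument is short.
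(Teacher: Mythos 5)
Your argument is correct, and it takes a mildly but genuinely different route from the paper. The paper dualizes: it invokes the criterion that a homomorphism of pro-$p$-groups is surjective if and only if the induced map $\varphi^*\colon H^1(\F_p^n,\F_p)\to H^1(G,\F_p)$ is injective (citing \cite[Proposition 1.6.14 (ii)]{NSW}), identifies $H^1(\F_p^n,\F_p)$ with $\F_p^n$, computes $\varphi^*(\chi_a)=\sum_i a_i\chi_i$, and concludes that injectivity of $\varphi^*$ is exactly linear independence of the $\chi_i$. You instead work covariantly: you pass to the Frattini quotient $G/\Phi(G)$, identify $H^1(G,\F_p)$ with continuous functionals on it, and use the elementary duality fact that $(\bar\chi_1,\ldots,\bar\chi_n)\colon W\to\F_p^n$ is surjective precisely when the functionals are linearly independent. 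These are two faces of the same Frattini-type reduction --- your proof essentially unpacks, in the special case of an elementary abelian target, the standard argument behind the NSW criterion the paper quotes --- so what the paper buys with the citation is brevity, while your version is self-contained and makes plain that the lemma is pure linear algebra once continuity lets every $\chi_i$ factor through $G/\Phi(G)$. Your handling of the one delicate point is fine: since the target is elementary abelian and discrete, $\varphi$ kills the closed subgroup $\overline{G^p[G,G]}$, so the factored map has the same image and surjectivity is preserved both ways; and linear independence of the $\bar\chi_i$ yields surjectivity by the usual dual-basis/codimension argument, which is valid even when $G/\Phi(G)$ is infinite-dimensional because only finitely many functionals are involved. (A small shortcut you might note: one can avoid the Frattini quotient altogether by observing that if the image of $\varphi$ were a proper subgroup of $\F_p^n$, it would lie in the kernel of some nonzero functional $(a_1,\ldots,a_n)$, giving the dependence $\sum_i a_i\chi_i=0$; together with the trivial converse this proves the lemma directly.)
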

\begin{proof}
We set $H:=\F_p\times\cdots\times \F_p$. Then $\varphi\colon G\to H$ is surjective if and only if the induced homomorphism $\varphi^*\colon H^1(H,\F_p)\to H^1(G,\F_p)$ is injective (\cite[Proposition 1.6.14 (ii)]{NSW}). We have a  (non-canonical) isomorphism
\[
H\to H^1(H,\F_p), a=(a_1,\ldots,a_n)\mapsto \chi_a,
\]
where $\chi_a$ is defined by $\chi_a(h_1,\ldots,h_n)=\sum_{i=1}^n a_ih_i$. Then for each $a=(a_1,\ldots,a_n)\in H$, 
\[
(\varphi^*(\chi_a))(g)=\sum_{i=1}^n a_i\chi_1(g),\; \forall g\in G.
\]
Therefore $\varphi^*$ is injective if and only if $\chi_1,\ldots,\chi_n$ are $\F_p$-linearly independent. 
\end{proof}
The following two lemmas provide some general properties of the set ${\rm Epi}(G,\U_n(\F_p))$.
\begin{lem}
\label{lem:G mod Zassenhaus subgroup} Let $G$ be a profinite group, and let $G(p)$ be its maximal pro-$p$-quotient. Then
\begin{enumerate}
\item[(a)] ${\rm Epi}(G,\U_n(\F_p))\simeq {\rm Epi}(G(p),\U_n(\F_p))$,
\item[(b)] ${\rm Epi}(G,\U_n(\F_p))\simeq {\rm Epi}(G/G_{(n)},\U_n(\F_p))$.
\end{enumerate}
\end{lem}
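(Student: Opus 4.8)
Part (a) is essentially formal: since $\U_n(\F_p)$ is a finite $p$-group, any continuous homomorphism $G \to \U_n(\F_p)$ factors uniquely through the maximal pro-$p$-quotient $G(p)$, and surjectivity is preserved and reflected by this factorization because $G \to G(p)$ has dense image. So the plan for (a) is just to invoke the universal property of $G(p)$ and check that composition with the projection $G \twoheadrightarrow G(p)$ gives a bijection between the two \textrm{Epi} sets, with inverse given by precomposition.

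The content is in part (b). The plan is to show that every $\rho \in {\rm Epi}(G, \U_n(\F_p))$ kills the $n$-th term $G_{(n)}$ of the Zassenhaus filtration, so that $\rho$ factors through $G/G_{(n)}$, and conversely that every element of ${\rm Epi}(G/G_{(n)}, \U_n(\F_p))$ pulls back to an element of ${\rm Epi}(G, \U_n(\F_p))$; the two assignments are mutually inverse. The converse direction and the bijectivity bookkeeping are trivial, so the heart of the matter is the claim $\rho(G_{(n)}) = 1$. For this I would use the standard fact that $\U_n(\F_p)$ itself has trivial $n$-th Zassenhaus subgroup: if $(U_{(k)})$ denotes the Zassenhaus filtration of $U := \U_n(\F_p)$, then $U_{(k)}$ is contained in the subgroup of matrices that agree with the identity in the first $k-1$ superdiagonals, hence $U_{(n)} = 1$. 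Granting this, functoriality of the Zassenhaus filtration under homomorphisms — i.e. $\rho(G_{(n)}) \subseteq U_{(n)}$ for any homomorphism $\rho\colon G \to U$, which follows by induction on $n$ directly from the inductive definition $G_{(n)} = G_{(\lceil n/p\rceil)}^p \prod_{i+j=n}[G_{(i)}, G_{(j)}]$ — immediately gives $\rho(G_{(n)}) = 1$.

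Concretely, for the nilpotence estimate on $U = \U_n(\F_p)$ I would set $U^{(k)}$ to be the subgroup of unipotent matrices $I + N$ with $N$ strictly upper triangular and vanishing on superdiagonals $1, \dots, k-1$ (so $U^{(1)} = U$ and $U^{(n)} = 1$), and verify the two closure properties needed: $[U^{(i)}, U^{(j)}] \subseteq U^{(i+j)}$, which is the usual observation that the commutator of $I+M$ and $I+N$ with $M, N$ nilpotent upper triangular lands deeper in the superdiagonal filtration by the degree-additivity $N^{(i)} \cdot N^{(j)} \subseteq N^{(i+j)}$ for nilpotent parts; and $(U^{(k)})^p \subseteq U^{(pk)} \subseteq U^{(k)}$ when $p \geq 2$, since raising $I + N$ to the $p$-th power produces terms $N^j$ with $j \geq 1$, and the lowest-order genuinely new term is $N^p$ lying in $U^{(pk)}$ — actually even $\binom{p}{1}N = pN = 0$ in characteristic $p$, so $(I+N)^p = I + \binom{p}{2}N^2 + \cdots$ already lies in $U^{(2k)}$. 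Then an easy induction on $k$ using the defining recursion shows $U_{(k)} \subseteq U^{(k)}$, whence $U_{(n)} = 1$.

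The main obstacle is purely one of care rather than depth: getting the indices in the Zassenhaus recursion to cooperate with the superdiagonal filtration, in particular handling the $p$-power term $G_{(\lceil k/p\rceil)}^p$ correctly (one needs $p \cdot \lceil k/p \rceil \geq k$, which holds, so its image lands in $U^{(k)}$). There is also a minor point that $G$ in (b) is an arbitrary profinite group and $G_{(n)}$ is defined with closed subgroups, so one should note that $\rho$ continuous implies $\rho(\overline{H}) \subseteq \overline{\rho(H)}$ and that $U$ is finite hence discrete, which keeps all the closure operations harmless. Finally one records that the composite $G \twoheadrightarrow G/G_{(n)} \to U$ is surjective iff $G/G_{(n)} \to U$ is, so the bijection respects the \textrm{Epi} condition.
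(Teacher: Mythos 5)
Your proposal is correct and follows essentially the same route as the paper: part (a) is the same finiteness observation, and part (b) rests on the same key fact that every homomorphism $G\to \U_n(\F_p)$ is trivial on $G_{(n)}$, which the paper simply cites (e.g.\ \cite[Lemma 3.7]{MT1}) rather than proving. Your superdiagonal-filtration verification of that fact is the standard argument behind those citations and is carried out correctly.
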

\begin{proof} (a) This is clear since $\U_n(\F_p)$ is a finite $p$-group.

(b) This follows from the fact that every homomorphism $\rho\colon G\to \U_n(\F_p)$ is trivial on $G_{(n)}$ (see for example \cite[Lemma 3.7]{MT1}, or \cite[Lemma 2.5]{MTE}, see also \cite[Corollary 7.2]{Ef}). 
\end{proof}

\begin{lem}
\label{lem:equiv}
Let $N,N^\prime$ be closed normal subgroups of a free pro-$p$-group $S$ such that $NS_{(n)}=N^\prime S_{(n)}$. Then 
\[
{\rm Epi}(S/N,\U_n(\F_p))\simeq {\rm Epi}(S/N^\prime,\U_n(\F_p)).
\]
\end{lem}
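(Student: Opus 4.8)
The plan is to reduce everything to the level of the finite quotient modulo the $n$-th Zassenhaus subgroup, where the hypothesis $NS_{(n)} = N'S_{(n)}$ can be exploited directly. First I would invoke Lemma~\ref{lem:G mod Zassenhaus subgroup}(b): since any homomorphism from a profinite group to $\U_n(\F_p)$ is trivial on the $n$-th Zassenhaus subgroup, we have
\[
{\rm Epi}(S/N,\U_n(\F_p)) \simeq {\rm Epi}\bigl((S/N)/(S/N)_{(n)},\U_n(\F_p)\bigr),
\]
and similarly for $N'$. The point is then to identify $(S/N)/(S/N)_{(n)}$ with $S/(NS_{(n)})$. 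This uses the standard fact that the Zassenhaus filtration is compatible with quotients, namely $(S/N)_{(n)} = N S_{(n)}/N$, so that $(S/N)/(S/N)_{(n)} \cong S/(NS_{(n)})$. Granting this, the hypothesis $NS_{(n)} = N'S_{(n)}$ gives $S/(NS_{(n)}) = S/(N'S_{(n)})$ as quotients of $S$, hence an equality (not merely an isomorphism) of these two finite groups together with their quotient maps from $S$. Combining the three displayed identifications yields the desired bijection
\[
{\rm Epi}(S/N,\U_n(\F_p)) \simeq {\rm Epi}(S/(NS_{(n)}),\U_n(\F_p)) = {\rm Epi}(S/(N'S_{(n)}),\U_n(\F_p)) \simeq {\rm Epi}(S/N',\U_n(\F_p)).
\]

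Concretely, the bijection can be described explicitly: given $\rho \in {\rm Epi}(S/N,\U_n(\F_p))$, compose with $S \twoheadrightarrow S/N$ to get a surjection $\tilde\rho\colon S \to \U_n(\F_p)$; since $\tilde\rho$ kills $S_{(n)}$ and kills $N$, it kills $NS_{(n)} = N'S_{(n)} \supseteq N'$, so it factors through $S/N'$, giving an element of ${\rm Epi}(S/N',\U_n(\F_p))$. This construction is manifestly inverse to the analogous one with the roles of $N$ and $N'$ swapped, which establishes the bijection without any further bookkeeping.

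The only genuine input beyond Lemma~\ref{lem:G mod Zassenhaus subgroup} is the compatibility $(S/N)_{(n)} = NS_{(n)}/N$ of the Zassenhaus filtration with quotients; this I would either cite (it is immediate from the inductive definition of $G_{(n)}$, since taking images commutes with forming $p$-th powers and commutator subgroups) or note in a single line. I do not anticipate a real obstacle here — the lemma is essentially a formal consequence of the fact that $\U_n(\F_p)$-representations only see $S/S_{(n)}$, and the main care needed is simply to keep the maps from $S$ consistent so that one obtains an actual bijection of sets rather than just an abstract isomorphism of the groups $S/N$ and $S/N'$ (which in general need not even be isomorphic — only their maximal class-$<n$ Zassenhaus quotients agree).
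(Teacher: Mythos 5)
Your proposal is correct and follows essentially the same route as the paper: both reduce via Lemma~\ref{lem:G mod Zassenhaus subgroup} to the quotient by the $n$-th Zassenhaus subgroup and use that surjections carry the Zassenhaus filtration onto the Zassenhaus filtration, so that $(S/N)/(S/N)_{(n)}\cong S/NS_{(n)}=S/N'S_{(n)}\cong (S/N')/(S/N')_{(n)}$. Your explicit description of the bijection by factoring maps through $S$ is a welcome extra precision but not a different argument.
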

\begin{proof} Let $G:=S/N$ and $G^\prime:=S/N^\prime$. Because surjective homomorphisms take $n$th Zassenhaus filtrations onto $n$th Zassenhaus filtrations,  using our assumption, we have
\[
G/G_{(n)}\cong S/NS_{(n)}=S/N^\prime S_{(n)}\cong G^\prime/G^\prime_{(n)}.
\]
Therefore our result follows from Lemma~\ref{lem:G mod Zassenhaus subgroup}.
 \end{proof}

We now consider the following exact sequence of finite groups
\begin{equation}
\label{eq:U4 exact sequence}
1 \longrightarrow M \longrightarrow \U_4(\F_p) \xrightarrow{(a_{12},a_{23},a_{34})} (\F_p)^3\longrightarrow 1,
\end{equation}
where $a_{ij}\colon\U_4(\F_p)\to\F_p$ is the map sending a matrix to its $(i,j)$-coefficient.  We set $B:=(\F_p)^3$.  
Let $\varphi\colon G \to B$ be any continuous homomorphism. We consider $M$ as a $G$-module via $\varphi$ and the conjugation action of $B$ on $M$. We denote by $M_\varphi$ this $G$-module.

We define ${\rm TMP}(G,\F_p)$ to be the set of triples $(x,y,z)\in (H^1(G,\F_p))^3$ such that the triple Massey product $\langle x,y,z\rangle$ is defined and contains 0, and that $x,y,z$ are $\F_p$-linearly independent in $H^1(G,\F_p)$. For each $\varphi\in {\rm TMP}(G,\F_p)$, let us  still denote, by abuse 
of notation, $\varphi \colon G\to (\F_p)^3$ the induced surjective group homomorphism.

\begin{prop}
\label{prop:counting Epi}
Let the notation be as above. Assume that the set ${\rm TMP}(G,\F_p)$ is finite, and that for each $\varphi \in {\rm TMP}(G,\F_p)$, $|Z^1(G,M_{\varphi})|$ is finite.  
Then 
\[
|{\rm Epi}(G,\U_4(\F_p))|=\sum_{\varphi\in {\rm TMP}(G,\F_p)} |Z^1(G,M_\varphi)|. 
\]
 \end{prop}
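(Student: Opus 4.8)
The plan is to partition $\mathrm{Epi}(G,\U_4(\F_p))$ according to the image of each surjection in $B=(\F_p)^3$, and to identify that image with an element of $\mathrm{TMP}(G,\F_p)$, after which Lemma~\ref{lem:Sol} counts each fiber. First I would observe that composing any $\rho\in\mathrm{Epi}(G,\U_4(\F_p))$ with the surjection $(a_{12},a_{23},a_{34})\colon\U_4(\F_p)\to B$ in the exact sequence~\eqref{eq:U4 exact sequence} yields a homomorphism $\varphi_\rho=(\rho_{12},\rho_{23},\rho_{34})\colon G\to B$. Since $\rho$ is surjective and $M\subseteq\Phi(\U_4(\F_p))$ lies in the Frattini subgroup (being contained in $[\U_4,\U_4]\U_4^p$, as $M$ is generated by entries above the superdiagonal), $\varphi_\rho$ is itself surjective; by Lemma~\ref{lem:linear independence} this is equivalent to $\rho_{12},\rho_{23},\rho_{34}$ being $\F_p$-linearly independent in $H^1(G,\F_p)$. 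Moreover, by Remark~\ref{rmk:Dwyer} applied with $n=3$, the existence of the lift $\rho\colon G\to\U_4(\F_p)$ of $\bar\rho=(\rho_{ij}\bmod Z)$ to $\bar\U_4(\F_p)$ shows precisely that the triple Massey product $\langle-\rho_{12},-\rho_{23},-\rho_{34}\rangle$ is defined and contains $0$; since scaling by $-1$ does not affect being defined, linearly independent, or containing $0$, this says exactly that $\varphi_\rho\in\mathrm{TMP}(G,\F_p)$ (under the identification of a triple with the homomorphism it induces). Hence $\rho\mapsto\varphi_\rho$ is a well-defined map $\mathrm{Epi}(G,\U_4(\F_p))\to\mathrm{TMP}(G,\F_p)$, and
\[
|\mathrm{Epi}(G,\U_4(\F_p))|=\sum_{\varphi\in\mathrm{TMP}(G,\F_p)}\bigl|\{\rho\in\mathrm{Epi}(G,\U_4(\F_p))\mid \varphi_\rho=\varphi\}\bigr|.
\]

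Next I would fix $\varphi\in\mathrm{TMP}(G,\F_p)$ and analyze the fiber over it. Consider the embedding problem $\sE=\sE(G,f,\varphi)$ given by $f=(a_{12},a_{23},a_{34})\colon\U_4(\F_p)\to B$ and $\varphi\colon G\to B$; its kernel is the abelian group $M$, made into the $G$-module $M_\varphi$ as described before the statement. The fiber $\{\rho\mid\varphi_\rho=\varphi\}$ is exactly the set of homomorphisms $\psi\colon G\to\U_4(\F_p)$ with $f\psi=\varphi$ that are \emph{surjective} — i.e. the surjective weak solutions of $\sE$. Because $\varphi\in\mathrm{TMP}(G,\F_p)$, the associated $\bar\rho_\varphi\colon G\to\bar\U_4(\F_p)$ (with entries $-\varphi$ on the superdiagonal and the Massey defining-system entries elsewhere) lifts to some $\theta\colon G\to\U_4(\F_p)$ with $f\theta=\varphi$; thus $\sE$ has a weak solution, and Lemma~\ref{lem:Sol} gives a bijection $\mathrm{Sol}(\sE)\simeq Z^1(G,M_\varphi)$, whose cardinality is finite by hypothesis. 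It therefore remains to show that \emph{every} weak solution $\psi\in\mathrm{Sol}(\sE)$ is automatically surjective, so that the fiber has size exactly $|Z^1(G,M_\varphi)|$.

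The surjectivity point is the step I expect to be the main (though still routine) obstacle. Let $\psi\in\mathrm{Sol}(\sE)$ and set $H=\psi(G)\subseteq\U_4(\F_p)$. Since $f\psi=\varphi$ is surjective, $f(H)=B$, so $H\cdot M=\U_4(\F_p)$; as $M$ lies in the Frattini subgroup of the $p$-group $\U_4(\F_p)$ (which equals $[\U_4,\U_4]\U_4^p$, and one checks directly that $M$, the group of matrices supported on the $(1,3),(2,4),(1,4)$ positions — wait, on the entries not in positions $(1,2),(2,3),(3,4)$ — is contained in it), Frattini's argument forces $H=\U_4(\F_p)$. Concretely: if $H\ne\U_4(\F_p)$ then $H$ is contained in some maximal subgroup, which contains the Frattini subgroup and hence $M$, contradicting $HM=\U_4(\F_p)$. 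With surjectivity established, combining the fiberwise count $|Z^1(G,M_\varphi)|$ over all $\varphi\in\mathrm{TMP}(G,\F_p)$ — a finite index set by hypothesis — yields the stated formula. I would write out the verification that $M$ lies in $\Phi(\U_4(\F_p))$ (equivalently, that the superdiagonal entries generate $\U_4(\F_p)$ modulo $M$, which is immediate since $\bar\U_4(\F_p)$ is generated by the three elementary superdiagonal generators) as the one genuinely computational lemma underpinning the argument.
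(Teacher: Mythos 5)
Your proposal is correct and follows essentially the same route as the paper, which simply cites Remark~\ref{rmk:Dwyer} (Dwyer's dictionary between defining systems and unipotent representations), Lemma~\ref{lem:Sol} (fiber count via $Z^1(G,M_\varphi)$), and Lemma~\ref{lem:linear independence} (surjectivity onto $(\F_p)^3$ versus linear independence); you have merely written out the details, including the Frattini argument for surjectivity of weak solutions, which the paper also invokes elsewhere. The only blemish is a harmless sign slip (a lift of the representation with $-\varphi$ on the superdiagonal satisfies $f\theta=-\varphi$, not $\varphi$), which is repaired by your own observation that $\varphi\mapsto-\varphi$ preserves ${\rm TMP}(G,\F_p)$.
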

 \begin{proof}
 This follows from Remark~\ref{rmk:Dwyer}, Lemma~\ref{lem:Sol} and Lemma~\ref{lem:linear independence}.
 \end{proof}

\begin{cor}
\label{cor:VTM}
 Let $G$ be a profinite group which has the vanishing triple Massey product property with respect to $\F_p$. Assume that for every surjective homomorphism $\varphi \colon G\surj (\F_p)^3$, $\dim Z^1(G,M_\varphi)=:s<\infty$, which is independent of $\varphi$. Assume further that  the number $N$ of triples $(a,b,c)\in (H^1(G,\F_p))^3$ such that $a\cup b=b\cup c=0$, and that $a,b$ and $c$ are $\F_p$ linearly independent, is finite. 
Then the number of surjective homomorphisms $G\surj \U_4(\F_p)$ is $Np^s$.
\end{cor}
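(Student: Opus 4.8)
The plan is to deduce the statement directly from Proposition~\ref{prop:counting Epi}, so the work amounts to checking that its hypotheses hold and that the two quantities appearing there take the claimed shape. First I would unwind the definition of ${\rm TMP}(G,\F_p)$ under the vanishing triple Massey product property. By the remark at the end of Definition~\ref{defn:Massey product}, for $a,b,c\in H^1(G,\F_p)$ the triple Massey product $\langle a,b,c\rangle$ is \emph{defined} if and only if $a\cup b=b\cup c=0$ in $H^2(G,\F_p)$; and whenever it is defined, the vanishing property forces $0\in\langle a,b,c\rangle$. Hence $(a,b,c)\in{\rm TMP}(G,\F_p)$ precisely when $a\cup b=b\cup c=0$ and $a,b,c$ are $\F_p$-linearly independent, so that $|{\rm TMP}(G,\F_p)|=N$, which is finite by assumption.

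Next I would verify the finiteness of $|Z^1(G,M_\varphi)|$ required in Proposition~\ref{prop:counting Epi}, for each $\varphi\in{\rm TMP}(G,\F_p)$. By Lemma~\ref{lem:linear independence}, linear independence of the three coordinate characters of $\varphi$ is equivalent to surjectivity of the induced homomorphism $\varphi\colon G\surj(\F_p)^3$; therefore the standing hypothesis of the corollary applies and gives $\dim Z^1(G,M_\varphi)=s<\infty$. Since $Z^1(G,M_\varphi)$ is an $\F_p$-vector space, this yields $|Z^1(G,M_\varphi)|=p^s$, independently of $\varphi$.

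Finally, plugging these two facts into Proposition~\ref{prop:counting Epi} gives
\[
|{\rm Epi}(G,\U_4(\F_p))|=\sum_{\varphi\in{\rm TMP}(G,\F_p)}|Z^1(G,M_\varphi)|=\sum_{\varphi\in{\rm TMP}(G,\F_p)}p^s=Np^s,
\]
which is the assertion. I do not expect a genuine obstacle here: all the substantive content is already contained in Proposition~\ref{prop:counting Epi} and in the Dwyer-type dictionary recorded in Remark~\ref{rmk:Dwyer}. The only points that deserve a sentence of care are (i) confirming that the vanishing triple Massey product property really does identify ${\rm TMP}(G,\F_p)$ with the set of triples counted by $N$ (so that no ``defined but not containing $0$'' triples are lost or spuriously added), and (ii) recording explicitly that each element of ${\rm TMP}(G,\F_p)$ induces a surjection onto $(\F_p)^3$, which is what makes the uniform bound $\dim Z^1(G,M_\varphi)=s$ available.
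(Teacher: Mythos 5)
Your proposal is correct and follows exactly the paper's route: the paper proves the corollary simply by noting it is a special case of Proposition~\ref{prop:counting Epi}, and your verification that the vanishing property identifies ${\rm TMP}(G,\F_p)$ with the $N$ triples (via the remark in Definition~\ref{defn:Massey product} that a triple Massey product is defined iff $a\cup b=b\cup c=0$) and that Lemma~\ref{lem:linear independence} makes the uniform bound $\dim Z^1(G,M_\varphi)=s$ applicable is precisely the bookkeeping the paper leaves implicit.
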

\begin{proof} Since $G$ has the vanishing triple Massey product property with respect to $\F_p$, we see that the cardinality of ${\rm TMP}(G,\F_p)$ is equal to $N$. Note also that, by assumption, $|Z^1(G,M_{\varphi})|=p^s$ for every surjecrtive homomorphism $\varphi\colon G\surj (\F_p)^3$. The statement then follows immediately from  Proposition~\ref{prop:counting Epi}.
\end{proof}
\subsection{Numbers of $\U_4(\F_2)$-extensions with restricted ramification}

 We now recall the definition of trace maps. Let $G$ be a pro-$p$-group. 
Let
\[ 1 \to R \to  S \to  G \to 1,\]
be a minimal presentation of $G$, i.e., $S$ a free pro-$p$-group and $R\subset S^p[S,S]$. Then the inflation map
\[ \inf :H^1(G, \F_p) \to  H^1(S, \F_p)\]
is an isomorphism by which we identify both groups. Since $S$ is free, we have $H^2(S,\F_p) = 0$ and from the  5-term exact sequence we obtain that the transgression map 
\[ {\rm trg}\colon H^1(R,\F_p)^ G \to H^2(G,\F_p)\]
is an isomorphism. 
Therefore any element $r\in R$ gives rise to a map
\[ {\rm tr}_r\colon H^2(G,\F_p) \to \F_p,\]
which is defined by $\alpha\mapsto {\rm trg}^{-1}(\alpha)(r)$ and is called the {\it trace map} with respect to $r$.

 \begin{prop}
\label{prop:R small}
 Let $G=S/R$, where $S$ is a free pro-$p$-group of rank $n$, and $R$ is a normal subgroup of $S$ with $R\subseteq S_{(3)}$. Then
 \[
 |{\rm Epi}(G,\U_4(\F_p))|=|{\rm TMP}(G,\F_p)| p^{3n}
 \]
 \end{prop}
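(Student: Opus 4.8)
The plan is to apply Proposition~\ref{prop:counting Epi} and show that under the hypothesis $R\subseteq S_{(3)}$, every defining system (equivalently, every homomorphism $\bar\rho\colon G\to\bar\U_4(\F_p)$) lifts to $\U_4(\F_p)$, so that the fiber over each $\varphi\in{\rm TMP}(G,\F_p)$ has the \emph{same} cardinality, namely $|Z^1(G,M_\varphi)|=p^{3n}$. First I would verify finiteness: since $G=S/R$ with $S$ free of rank $n$, $H^1(G,\F_p)$ is finite-dimensional, so ${\rm TMP}(G,\F_p)$ is finite; and each $M_\varphi$ is a finite $G$-module, so $Z^1(G,M_\varphi)$ is finite. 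This puts us in the situation of Proposition~\ref{prop:counting Epi}, giving
\[
|{\rm Epi}(G,\U_4(\F_p))|=\sum_{\varphi\in {\rm TMP}(G,\F_p)} |Z^1(G,M_\varphi)|.
\]

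Next I would compute $|Z^1(G,M_\varphi)|$ for a fixed $\varphi\in{\rm TMP}(G,\F_p)$. Here $M\cong(\F_p)^3$ is the kernel of \eqref{eq:U4 exact sequence}; although $M$ carries a possibly nontrivial $B$-action, its order is $p^3$ regardless. Consider the weak embedding problem for the \emph{free} group $S$ obtained by composing $\varphi$ with $S\surj G$:
\[
\xymatrix{
{} & S \ar[d]\\
\U_4(\F_p) \ar[r] & B.
}
\]
By Example~\ref{ex:Z1 free}, $|Z^1(S,M_\varphi)|=|M|^n=p^{3n}$, and one-cocycles (equivalently, weak solutions, i.e.\ lifts $S\to\U_4(\F_p)$) can be prescribed arbitrarily on the $n$ free generators. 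The key point is now to show the restriction map $Z^1(G,M_\varphi)\to Z^1(S,M_\varphi)$ is a bijection. It is injective because $S\surj G$ is surjective. For surjectivity, a $1$-cocycle of $S$ with values in $M_\varphi$ factors through $G$ iff the corresponding lift $\tilde\rho\colon S\to\U_4(\F_p)$ kills $R$; since $R\subseteq S_{(3)}$ and any homomorphism from a pro-$p$-group into $\U_4(\F_p)$ kills the third term of the Zassenhaus filtration (Lemma~\ref{lem:G mod Zassenhaus subgroup}(b), applied with $n=4>3$, or rather directly: $\U_4(\F_p)_{(3)}=1$ so $\tilde\rho(S_{(3)})\subseteq\U_4(\F_p)_{(3)}=1$), every such $\tilde\rho$ does kill $R$. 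Hence $Z^1(G,M_\varphi)\cong Z^1(S,M_\varphi)$ has order $p^{3n}$, independent of $\varphi$.

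Finally, substituting into the sum yields
\[
|{\rm Epi}(G,\U_4(\F_p))|=\sum_{\varphi\in {\rm TMP}(G,\F_p)} p^{3n}=|{\rm TMP}(G,\F_p)|\,p^{3n},
\]
as claimed. The only genuinely delicate step is the identification $Z^1(G,M_\varphi)\cong Z^1(S,M_\varphi)$, i.e.\ confirming that the condition $R\subseteq S_{(3)}$ forces every lift $S\to\U_4(\F_p)$ of the (already $R$-trivial) map $\bar\rho$ to itself be $R$-trivial; this rests on the fact that $\U_4(\F_p)$ has Zassenhaus filtration of length $3$, so that $\U_4(\F_p)_{(3)}$ is trivial. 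Everything else is bookkeeping: matching up lifts of $\bar\rho$, weak solutions of $\sE$, and $1$-cocycles via Remark~\ref{rmk:Dwyer}, Lemma~\ref{lem:Sol}, and Example~\ref{ex:Z1 free}.
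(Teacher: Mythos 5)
Your overall skeleton matches the paper's (reduce via Proposition~\ref{prop:counting Epi} to showing $|Z^1(G,M_\varphi)|=p^{3n}$ for each $\varphi\in{\rm TMP}(G,\F_p)$, and count cocycles/lifts over the free group $S$ as in Example~\ref{ex:Z1 free}), but the step you yourself flag as the delicate one is justified by a false claim, and this is a genuine gap. It is not true that $\U_4(\F_p)_{(3)}=1$: the Zassenhaus filtration of $\U_4(\F_p)$ is $\U_4(\F_p)_{(k)}=\{I+A:\ A_{ij}=0 \text{ for } j-i<k\}$, so $\U_4(\F_p)_{(3)}=Z\cong\F_p$ is the subgroup of matrices supported at the $(1,4)$-entry, and only $\U_4(\F_p)_{(4)}$ is trivial. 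Lemma~\ref{lem:G mod Zassenhaus subgroup}(b) applied with $n=4$ says homomorphisms to $\U_4(\F_p)$ kill $G_{(4)}$, not $G_{(3)}$, and $R\subseteq S_{(3)}$ does not give $R\subseteq S_{(4)}$. Hence a lift $\tilde\rho\colon S\to\U_4(\F_p)$ of $-\varphi$ only satisfies $\tilde\rho(R)\subseteq Z$; it need not kill $R$, and your bijection $Z^1(G,M_\varphi)\cong Z^1(S,M_\varphi)$ is not established. Note also that your argument never uses the hypothesis $\varphi\in{\rm TMP}(G,\F_p)$ at this point; if it were correct, every surjection $G\to(\F_p)^3$ would lift and ${\rm TMP}(G,\F_p)$ would be the full set of linearly independent triples, which Example~\ref{ex:ram02} refutes ($6$ versus $(2^3-1)(2^3-2)(2^3-2^2)=168$).

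The missing ingredient is exactly where the Massey product enters in the paper's proof: for $r\in R\subseteq S_{(3)}$ the only possibly nonzero coordinate of $\tilde\rho(r)$ is the $(1,4)$-entry, and by \cite[Lemma 3.4]{MT1} it equals $-{\rm tr}_r$ of the value of $\langle\alpha,\beta,\gamma\rangle$ with respect to the defining system determined by the induced map $G\to\bar\U_4(\F_p)$ (which exists since $\bar\U_4(\F_p)_{(3)}=1$ — this is where the filtration argument is actually valid). Because $R\subseteq S_{(3)}$, the triple Massey product on $G$ is single-valued, so this value coincides with $\langle\alpha,\beta,\gamma\rangle=0$, which holds precisely because $\varphi\in{\rm TMP}(G,\F_p)$; only then does every lift $\tilde\rho$ kill $R$, every one of the $p^{3n}$ prescriptions on generators descends to $G$, and the count $|Z^1(G,M_\varphi)|=p^{3n}$ follows. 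With that substitution your argument becomes the paper's proof; as written, the key step fails.
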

 \begin{proof} Let $\varphi=(\alpha,\beta,\gamma)$ be any fixed element in ${\rm TMP}(G,\F_p)$, which determines a surjective homomorphism, also  denoted by $\varphi\colon G\to(\F_p)^3$. It is enough to show that $|Z^1(G,M_\varphi)|=p^{3n}$. 
 
 To this end, let us first consider a minimal set $x_1,\ldots,x_n$ of topological generators of $S$ and its image $\bar{x}_1,\ldots,\bar{x}_n$ in $G$. Let $\tilde{\psi}$ be any homomorphism from $S$ to $\U_4(\F_p)$ such that 
 \[
 \tilde{\psi}(x_i) \bmod M=   -\varphi(\bar{x}_i), \quad \forall 1\leq i\leq n,
 \]
 or equivalently,
 \[
 \tilde{\psi}_{12}=-\alpha,  \tilde{\psi}_{23}=-\beta,  \tilde{\psi}_{34}=-\gamma.
 \]
 Note that $\tilde{\psi}(S_{(3)})=1$ in $\bar{\U}_4(\F_p)$. Hence $\tilde{\psi}$ induces a homomorphism $\bar{\psi}\colon G=S/R\to \bar{\U}_4(\F_p)$ such that the following diagram commutes
 \[\
\xymatrix{
S \ar@{->}[d]^{\tilde{\psi}} \ar@{->}[r] &G \ar@{->}[d]^{\bar{\psi}}\ar@{->}[r] &1\\
\U_4(\F_p)\ar[r] &\bar{\U}_4(\F_p)\ar[r] &1.
}
\]
Since $R\subseteq S_{(3)}$, it is well-known  (see \cite[Appendix]{Vo2}, see also \cite{Ef}) that we have a well-defined (single-valued) triple Massey product 
\[
\langle \cdot,\cdot,\cdot\rangle\colon H^1(G,\F_p)\times H^1(G,\F_p)\times H^1(G,\F_p) \to H^2(G,\F_p).
\]
Since $\varphi=(\alpha,\beta,\gamma)\in {\rm TMP}(G,\F_p)$, we obtain $\langle \alpha,\beta,\gamma\rangle =0$. Hence for every $r\in R$,  \cite[Lemma 3.4]{MT1} implies that
\[
0={\rm tr}_r(\langle\alpha,\beta,\gamma\rangle)=-\tilde{\psi}_{1,4}(r).
\]
Thus $\tilde{\psi}(r)=1$ for every $r\in R$. This implies that $\tilde{\psi}$ factors through a homomorphism $\psi\colon G\to \U_4(\F_p)$.
 Because $-\psi$ is a lift of $\varphi$ on topological generators of $G$, we see that $-\psi$ is a lift of $\varphi\colon G\to (\F_p)^3$.
  Therefore $|Z^1(G,M_\varphi)|$ is equal to the number of liftings of $\varphi$, which is in turn equal to $p^{3n}$, the number of such $\tilde{\psi}$'s. 
 \end{proof}

\begin{prop}
\label{prop:restricted ramification}
  Let $G=S/R$, where $S$ is a free pro-$p$-group on  $n$ generators $x_1,x_2,\ldots,x_n$, and $R$ is a normal (closed) subgroup of $S$ generated by $\rho_1,\ldots,\rho_r$. We assume that for each $1\leq m\leq r$ we have 
\[
\rho_m\equiv \prod_{1\leq i<j\leq n; k\leq j} [[x_i,x_j],x_k]^{e_{i,j,k,m}} \mod S_{(4)}. 
\]
Let $N$ be the number of  $(a_1,\ldots,a_n,b_1,\ldots,b_n,c_1,\ldots,c_n)\in \F_p^{3n}$ which satisfy the following two conditions:
\begin{enumerate}
\item For each $1\leq m\leq r$, we have
\[
\begin{split}
\sum_{i<j,k\leq j} (a_ib_jc_k-a_jb_ic_k+ a_kb_jc_i-a_kb_ic_j)e_{i,j,k,m} 
=0,
\end{split}
\]
\item ${\rm rank}(A)=3$, where  $A$ is the $3\times n$- matrix whose rows are $(a_1,\ldots,a_n)$, $(b_1,\ldots,b_n)$ and $(c_1,\ldots,c_n)$.
\end{enumerate}
Then 
\[
 |{\rm Epi}(G,\U_4(\F_p))|=N p^{3n}
\]
\end{prop}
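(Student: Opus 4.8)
The plan is to reduce the statement to Proposition~\ref{prop:R small} and then to identify ${\rm TMP}(G,\F_p)$ with the solution set counted by $N$ by an explicit trace-map computation. First I would observe that $R\subseteq S_{(3)}$: every triple commutator $[[x_i,x_j],x_k]$ lies in $[S_{(2)},S_{(1)}]\subseteq S_{(3)}$ and $S_{(4)}\subseteq S_{(3)}$, so the assumed congruence forces each $\rho_m\in S_{(3)}$. Hence Proposition~\ref{prop:R small} applies and gives $|{\rm Epi}(G,\U_4(\F_p))|=|{\rm TMP}(G,\F_p)|\,p^{3n}$, so it suffices to prove $|{\rm TMP}(G,\F_p)|=N$. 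I would then fix the basis $\chi_1,\dots,\chi_n$ of $H^1(G,\F_p)=H^1(S,\F_p)$ dual to $x_1,\dots,x_n$, write $\alpha=\sum a_i\chi_i$, $\beta=\sum b_i\chi_i$, $\gamma=\sum c_i\chi_i$, and note that (by Lemma~\ref{lem:linear independence}, or directly, since the rows of $A$ are the coordinate vectors of $\alpha,\beta,\gamma$) linear independence of $\alpha,\beta,\gamma$ is exactly condition (2). Since $R\subseteq S_{(3)}$, the triple Massey product is everywhere defined and single-valued on $(H^1(G,\F_p))^3$ (as recalled in the proof of Proposition~\ref{prop:R small}), so $(\alpha,\beta,\gamma)\in{\rm TMP}(G,\F_p)$ if and only if $A$ has rank $3$ and $\langle\alpha,\beta,\gamma\rangle=0$.

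Next I would translate the vanishing of $\langle\alpha,\beta,\gamma\rangle$ into linear equations via trace maps. Because $R$ is topologically normally generated by $\rho_1,\dots,\rho_r$, a class in $H^2(G,\F_p)\cong H^1(R,\F_p)^{G}$ is zero precisely when it is annihilated by ${\rm tr}_{\rho_m}$ for every $m$; thus $\langle\alpha,\beta,\gamma\rangle=0$ iff ${\rm tr}_{\rho_m}(\langle\alpha,\beta,\gamma\rangle)=0$ for all $m$. As in the proof of Proposition~\ref{prop:R small}, I would pick a homomorphism $\tilde\psi\colon S\to\U_4(\F_p)$ with $\tilde\psi_{12}=-\alpha$, $\tilde\psi_{23}=-\beta$, $\tilde\psi_{34}=-\gamma$, and apply \cite[Lemma 3.4]{MT1} to get ${\rm tr}_{\rho_m}(\langle\alpha,\beta,\gamma\rangle)=-\tilde\psi_{14}(\rho_m)$. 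Since $\U_4(\F_p)_{(4)}=1$ we have $\tilde\psi(S_{(4)})=1$, so $\tilde\psi(\rho_m)=\prod_{i<j,\,k\le j}\tilde\psi([[x_i,x_j],x_k])^{e_{i,j,k,m}}$; and as every triple commutator in $\U_4(\F_p)$ lies in the central subgroup $Z=\ker(\U_4(\F_p)\to\bar\U_4(\F_p))$, on which the $(1,4)$-coordinate is additive, this yields
\[
\tilde\psi_{14}(\rho_m)=\sum_{i<j,\,k\le j}e_{i,j,k,m}\,\tilde\psi_{14}\bigl([[x_i,x_j],x_k]\bigr).
\]

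It then remains to compute $\tilde\psi_{14}([[X_i,X_j],X_k])$, where $X_\ell:=\tilde\psi(x_\ell)$ has first-superdiagonal entries $-a_\ell,-b_\ell,-c_\ell$. A direct matrix computation shows that $Y:=[X_i,X_j]$ has $(1,3)$-entry $a_ib_j-a_jb_i$, $(2,4)$-entry $b_ic_j-b_jc_i$, and vanishing first-superdiagonal entries, and then that $[Y,X_k]$ has $(1,4)$-entry $a_k(b_ic_j-b_jc_i)-(a_ib_j-a_jb_i)c_k$; all potentially disturbing higher-weight contributions drop out because the $(1,4)$-square vanishes and products of weight $\ge4$ vanish among strictly upper triangular $4\times4$ matrices. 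Substituting and expanding gives
\[
{\rm tr}_{\rho_m}(\langle\alpha,\beta,\gamma\rangle)=\sum_{i<j,\,k\le j}(a_ib_jc_k-a_jb_ic_k+a_kb_jc_i-a_kb_ic_j)\,e_{i,j,k,m},
\]
which is exactly the left-hand side of condition (1). Hence $(\alpha,\beta,\gamma)\in{\rm TMP}(G,\F_p)$ if and only if the tuple $(a_1,\dots,a_n,b_1,\dots,b_n,c_1,\dots,c_n)$ satisfies (1) and (2), so $|{\rm TMP}(G,\F_p)|=N$ and the proposition follows.

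The cohomological reductions (using Proposition~\ref{prop:R small}, the trace-map description of $H^2$, and \cite[Lemma 3.4]{MT1}) are routine. I expect the genuine work to be the last step: the index and sign bookkeeping through the double commutator — making sure that every cubic term contributing to the $(1,4)$-entry, including those routed through the $(1,3)$- and $(2,4)$-positions, is accounted for — together with the elementary structural facts $\U_4(\F_p)_{(4)}=1$ and $[[\U_4(\F_p),\U_4(\F_p)],\U_4(\F_p)]=Z$ used above.
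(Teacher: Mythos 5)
Your proof is correct, and its overall skeleton (reduce to Proposition~\ref{prop:R small}, then show $|{\rm TMP}(G,\F_p)|=N$ by testing $\langle\alpha,\beta,\gamma\rangle$ against the trace maps ${\rm tr}_{\rho_m}$) coincides with the paper's; the difference lies in how the trace of the Massey product is computed. The paper expands $\alpha,\beta,\gamma$ in the dual basis, uses trilinearity of the single-valued triple Massey product, and then quotes Vogel's appendix to identify ${\rm tr}_{\rho_m}\langle\chi_i,\chi_j,\chi_k\rangle$ with Magnus-expansion coefficients $\epsilon_{(ijk),p}(\rho_m)$ of the relators, collecting terms case by case ($i\ne k$, $i=k<j$, $i<j=k$) to reach condition (1). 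You instead reuse the identity ${\rm tr}_{\rho_m}(\langle\alpha,\beta,\gamma\rangle)=-\tilde\psi_{14}(\rho_m)$ from \cite[Lemma 3.4]{MT1} (already invoked in the proof of Proposition~\ref{prop:R small}) and evaluate $\tilde\psi_{14}$ on the triple commutators by a direct $4\times 4$ matrix computation; I checked that $[Y,X_k]=I+BA_k-A_kB$ exactly in $\U_4(\F_p)$, so your $(1,4)$-entry $a_k(b_ic_j-b_jc_i)-(a_ib_j-a_jb_i)c_k$ is right, and it reproduces the condensed formula of condition (1) uniformly, with the degenerate index cases handled automatically rather than by the paper's case split. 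What your route buys is self-containedness (no appeal to Vogel's Magnus-expansion formulas, only to facts the paper already uses: single-valuedness of the product for $R\subseteq S_{(3)}$, triviality of homomorphisms on $S_{(4)}$, additivity of the $(1,4)$-coordinate on the center) and a cleaner treatment of repeated indices; what the paper's route buys is the explicit link to the $\epsilon_{(ijk),p}$-coefficients, which is exactly the form needed to feed in Vogel's arithmetic data (Rédei symbols) in the corollary that follows. One small point to make explicit if you write this up: \cite[Lemma 3.4]{MT1} computes the trace of the value relative to the defining system attached to $\tilde\psi$, so you need the single-valuedness coming from $R\subseteq S_{(3)}$ (which you do note) to conclude it equals ${\rm tr}_{\rho_m}$ of \emph{the} Massey product; and in any case the global sign convention is immaterial since only the vanishing of the trace is used.
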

\begin{proof}
By the assumption we see that $R\subseteq S_{(3)}$. Hence  by  Proposition~\ref{prop:counting Epi} we have 
\[
|{\rm Epi}(G,\U_4(\F_p))|=|{\rm TMP}(G,\F_p)| p^{3n}.
\]
Thus it is enough to show that $N=|{\rm TMP(G,\F_p)}|$.

To this end, let $\chi_1,\ldots,\chi_n$ be the dual basis to $x_1,\ldots,x_n$ of $H^1(S,\F_p)=H^1(G,\F_p)$, i.e., $\chi_i(x_j)=\delta_{ij}$. 
Let $\varphi=(\alpha,\beta,\gamma)$ be any element in $H^1(G,\F_p)^3$. We write $\alpha=\sum_{i=1}^na_i\chi_i$, $\beta=\sum_{i=1}^nb_i\chi_i$, $\gamma=\sum_{i=1}^nc_i\chi_i$, where $a_i,b_i,c_i\in \F_p$.  Then $\langle\alpha,\beta,\gamma\rangle=0$ if and only if for each $1\leq m\leq r$, we have ${\rm tr}_{\rho_m}(\langle\alpha,\beta,\gamma\rangle)=0$. By \cite[Appendix]{Vo2}, we see that
\[ 
\begin{aligned}
{\rm tr}_{\rho_m}(\langle\alpha,\beta,\gamma\rangle) = & \sum_{i,j,k}a_ib_jc_k\langle \chi_i,\chi_j,\chi_k\rangle\\
=& \sum_{i,j,k} a_ib_jc_k \epsilon_{(ijk),p}(\rho_m)\\
=& \sum_{i<j,k<j,i\not=k} (a_ib_jc_k-a_jb_ic_k+ a_kb_jc_i-a_kb_ic_j)e_{i,j,k,m} \\
&+\sum_{i=k<j,k<j} (2a_ib_jc_k-a_jb_ic_k-a_kb_ic_j)e_{i,j,k,m} \\
&+\sum_{i<j=k} (a_ib_jc_k-2a_jb_ic_k+ a_kb_jc_i)e_{i,j,k,m}.
\end{aligned}
\]
(Here the coefficients $\epsilon_{(ijk),p}(\rho_m)$ are coefficients in the Magnus expansion of $\rho_m$, see \cite{Vo2}.)

Note also that $\alpha,\beta,\gamma$ are $\F_p$-linearly independent if and only if ${\rm rank}(A)=3$. Hence $\varphi=(\alpha,\beta,\gamma)$ is in ${\rm TMP}(G,\F_p)$ if and only if the $3n$-tuple $(a_1,\ldots,a_n,b_1,\ldots,b_n,c_1,\ldots,c_n)$ satisfies two conditions (1)-(2) in the proposition. Therefore $N=|{\rm TMP}(G,\F_p)|$, and the statement follows.
\end{proof}

 Recall that for  prime numbers $p_1,p_2,p_3$ with ${\rm gcd}(p_1,p_2,p_3)=1$, $p_i\equiv 1\bmod 4$, $i=1,2,3$ and 
\[
  \left( \dfrac{p_i}{p_j}\right)=1 \text{  if $p_i\not=p_j$ and $1\leq i,j\leq 3$},
\]
one can define the R\'edei symbol $[p_1,p_2,p_3]$ taking values $\pm1$ as follows.  There exists an element $\alpha\in K_1:=\Q(\sqrt{p_1})$ with the following properties: 
\begin{enumerate}
\item ${\rm Nm}_{K_1/\Q}(\alpha)=p_2$ and 
\item ${\rm Nm}_{K_1/\Q}(D_{K_1(\sqrt{\alpha})/K_1})=p_2$, where $D_{K_1(\sqrt{\alpha})/K_1}$ is the discriminant of the extension  $K_1(\sqrt{\alpha})/K_1$. 
\end{enumerate}
Furthermore, there exists a prime ${\mathfrak{p}_3}$ in $K_1$ over $p_3$ such that $\mathfrak{p}_3$ is unramified in $K_1(\sqrt{\alpha})$. Then the R\'edei symbol is defined as
\[
[p_1,p_2,p_3]:=\begin{cases}
1 & \text{ if $\mathfrak{p}_3$ splits in $K_1(\sqrt{\alpha})$},\\
-1 & \text{ if $\mathfrak{p}_3$ is inert in $K_1(\sqrt{\alpha})$}.
\end{cases}
\]
The value $[p_1,p_2,p_3]$ does not depend on the choices of $\alpha$ and $\mathfrak{p}_3$. (For more details, see for example \cite{Vo2}.)

Using the work \cite{Vo2}, we obtain the following result.
\begin{cor}
Let $S$ be a set of odd primes $\{l_1,\ldots,l_n\}$, where $l_i\equiv 1\mod 4$, $i=1,\ldots,n$, and assume that 
\[
\left(\frac{l_i}{l_j}\right)=1 \text{ for all } 1\leq i,j\leq n, i\not=j.
\]
For $1\leq i<j\leq n$, $k\leq j$, $1\leq m\leq n$, we define $e_{i,j,k,m}$ which will be $0$ or $1$ by the condition that
\[
(-1)^{e_{i,j,k,m}}=
\begin{cases}
 [l_i,l_j,l_k] &\text{ if $m=j$ and $m\not=k$},\\
 [l_i,l_j,l_k] &\text{ if $m\not=j$ and $m=k$},\\
  [l_i,l_j,l_k] &\text{ if $m=i$ and $j=k$},\\
   [l_i,l_j,l_k] &\text{ if $m=j=k$},\\
   1 &\text{ otherwise}.
\end{cases}
\]
Let $N$ be the number of  $(a_1,\ldots,a_n,b_1,\ldots,b_n,c_1,\ldots,c_n)\in \F_2^{3n}$ which satisfy the following two conditions:
\begin{enumerate}
\item For each $1\leq m\leq r$, we have
\[
\begin{split}
\sum_{i<j,k\leq j} (a_ib_jc_k + a_jb_ic_k+ a_kb_jc_i +a_kb_ic_j)e_{i,j,k,m} 
=0,
\end{split}
\]
\item ${\rm rank}(A)=3$, where  $A$ is the $3\times n$- matrix whose rows are $(a_1,\ldots,a_n)$, $(b_1,\ldots,b_n)$ and $(c_1,\ldots,c_n)$.  
\end{enumerate}
Then the number of Galois $\U_4(\F_2)$-extensions over $\Q$ which are unramified outside $\{S\}\cup \{\infty\}$ is $N2^{3n-7}/3$.
\end{cor}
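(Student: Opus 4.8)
The plan is to deduce this corollary from Proposition~\ref{prop:restricted ramification} applied to the maximal pro-$2$-quotient $G_{\Q,S}(2)$ of the Galois group of the maximal $2$-extension of $\Q$ unramified outside $S\cup\{\infty\}$. First I would invoke the presentation of this group obtained by Vogel in \cite{Vo2}: under the hypotheses $l_i\equiv 1\bmod 4$ and $\left(\frac{l_i}{l_j}\right)=1$, the group $G_{\Q,S}(2)$ admits a minimal presentation $S/R$ with $S$ free pro-$2$ on $n$ generators $x_1,\ldots,x_n$ (one for each prime $l_i$), and $R$ normally generated by relators $\rho_1,\ldots,\rho_n$ which, because all the linking numbers vanish, lie in $S_{(3)}$ and whose class modulo $S_{(4)}$ is governed by the R\'edei symbols: precisely
\[
\rho_m\equiv \prod_{1\leq i<j\leq n;\, k\leq j}[[x_i,x_j],x_k]^{e_{i,j,k,m}}\bmod S_{(4)},
\]
with the exponents $e_{i,j,k,m}\in\F_2$ read off from $[l_i,l_j,l_k]$ via the case distinction in the statement. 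This is exactly the hypothesis of Proposition~\ref{prop:restricted ramification}.

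Next I would apply Proposition~\ref{prop:restricted ramification} verbatim to $G=G_{\Q,S}(2)$ with $p=2$. Since $-1=1$ in $\F_2$, the four-term alternating expression $a_ib_jc_k-a_jb_ic_k+a_kb_jc_i-a_kb_ic_j$ becomes $a_ib_jc_k+a_jb_ic_k+a_kb_jc_i+a_kb_ic_j$, which is precisely condition (1) of the corollary; condition (2) (rank of $A$ equal to $3$) is unchanged. Proposition~\ref{prop:restricted ramification} then gives
\[
|{\rm Epi}(G_{\Q,S}(2),\U_4(\F_2))|=N\,2^{3n},
\]
where $N$ is the number of $3n$-tuples satisfying (1)--(2). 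By Lemma~\ref{lem:G mod Zassenhaus subgroup}(a), ${\rm Epi}(G_{\Q,S},\U_4(\F_2))\simeq{\rm Epi}(G_{\Q,S}(2),\U_4(\F_2))$, so this also counts surjections from the full restricted Galois group.

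Finally I would pass from epimorphisms to extensions. A Galois $\U_4(\F_2)$-extension of $\Q$ unramified outside $S\cup\{\infty\}$ is the fixed field of the kernel of a surjection $G_{\Q,S}\surj\U_4(\F_2)$, and two surjections have the same kernel iff they differ by an automorphism of $\U_4(\F_2)$. Hence the number of such extensions equals $|{\rm Epi}(G_{\Q,S},\U_4(\F_2))|/|{\rm Aut}(\U_4(\F_2))|$. So the remaining arithmetic input is the computation $|{\rm Aut}(\U_4(\F_2))|=3\cdot 2^{7}=384$; combining this with $N\,2^{3n}$ yields $N\,2^{3n}/(3\cdot 2^7)=N\,2^{3n-7}/3$, as claimed. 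One should also check that every surjection $G_{\Q,S}\to\U_4(\F_2)$ is automatically unramified outside $S\cup\{\infty\}$ by construction (it factors through $G_{\Q,S}$ by definition), so the fixed field is genuinely such an extension, and conversely any such extension arises this way.

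The main obstacle is not in the algebra — that is a mechanical transcription of Proposition~\ref{prop:restricted ramification} to $p=2$ — but in correctly citing and applying Vogel's presentation of $G_{\Q,S}(2)$: one must verify that under the stated Legendre-symbol hypotheses the relators really do lie in $S_{(3)}$ (so that the triple Massey product machinery applies) and that the mod-$S_{(4)}$ Magnus coefficients $\epsilon_{(ijk),2}(\rho_m)$ are exactly the R\'edei symbols $[l_i,l_j,l_k]$, with the index bookkeeping in the four listed cases matching the symmetrization in condition (1). A secondary point requiring care is the automorphism count $|{\rm Aut}(\U_4(\F_2))|=384$; this can be obtained by noting $\U_4(\F_2)$ has an abelianization of rank $3$ acted on by the outer automorphisms, or simply cited, but it must be pinned down precisely for the final constant $2^{3n-7}/3$ to be correct.
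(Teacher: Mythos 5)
Your proposal is correct and follows essentially the same route as the paper: cite Vogel's presentation of $G_\sS(2)$ with relators in $S_{(3)}$ whose Magnus coefficients mod $S_{(4)}$ are the R\'edei symbols, apply Proposition~\ref{prop:restricted ramification} (with the signs collapsing since $p=2$) to get $|{\rm Epi}(G,\U_4(\F_2))|=N2^{3n}$, and divide by $|{\rm Aut}(\U_4(\F_2))|=3\cdot 2^7$. Your extra remarks on Lemma~\ref{lem:G mod Zassenhaus subgroup}(a) and on kernels versus automorphisms are just explicit versions of steps the paper leaves implicit.
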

\begin{proof}
Let $G:=G_\sS(2)$ be the Galois group of the maximal 2-extension of $\Q$ unramified outside $\sS\cup \{\infty\}$. By \cite[Theorem 3.12]{Vo2}, $G$ has a presentation $G=S/R$ where $S$ is a free pro-2-group on $n$ generators $x_1,\ldots,x_n$, and $R$ is a normal subgroup generated by $n$ relations $\rho_1,\ldots,\rho_m$, and 
\[
\rho_m\equiv \prod_{1\leq i<j\leq n; k\leq j} [[x_i,x_j],x_k]^{e_{i,j,k,m}} \mod S_{(4)}, \;\forall m=1,\ldots, n. 
\]
Hence by Proposition~\ref{prop:restricted ramification}, $|{\rm Epi}(G,\U_4(\F_2))|=N 2^{3n}$. Therefore the number of Galois $\U_4(\F_2)$-extensions over $\Q$ which are unramified outside $\sS\cup \{\infty\}$ is 
\[
\frac{|{\rm Epi}(G,\U_4(\F_2))|}{|{\rm Aut}(\U_4(\F_2))|}=\frac{N2^{3n}}{3\cdot 2^7}=N2^{3n-7}/3.
\]
(Here by \cite[Theorem]{M}, we know that $|{\rm Aut}(\U_4(\F_2))| =3\cdot 2^7$.)
\end{proof}
\begin{ex}(cf. \cite[Example 3.15]{Vo2}).
\label{ex:ram01}
 Let $\sS=\{5,101,8081,\infty\}$ and let $G:=G_\sS(2)$ be the Galois group of the maximal 2-extension of $\Q$ unramified outside $\sS$. Then $G$ has a minimal presentation 
\[
 1\to R\to S\to G\to 1,
\]
where $S$ is a free pro-$2$-group on generators $x_1,x_2,x_3$, and $R\subseteq S_{(4)}$. Therefore the number of Galois $\U_4(\F_2)$-extensions over $\Q$ which are unramified outside $\{5,101,8081,\infty\}$ is 
\[
\frac{|{\rm Epi}(G,\U_4(\F_2))|}{|{\rm Aut}(\U_4(\F_2))|}=\frac{2^{9}(2^3-1)(2^3-2)(2^3-2^2)}{3\cdot 2^7}=7\cdot 2^{5}=224.
\]
\end{ex}

\begin{ex}(cf. \cite[Example 3.14]{Vo2}). 
\label{ex:ram02}
Let $\sS=\{13,61,937,\infty\}$ and let $G:=G_\sS(2)$ be the Galois group of the maximal 2-extension of $\Q$ unramified outside $\sS$. Then $G$ has a minimal presentation 
\[
 1\to R\to S\to G\to 1,
\]
where $S$ is a free pro-$2$-group on generators $x_1,x_2,x_3$, and a minimal generating system of $R$ as a normal subgroup of $S$ is $\sR=\{\rho_1,\rho_2,\rho_3\}$ with
\[
 \begin{aligned}
  \rho_1 &\equiv [[x_2,x_3],x_1] \bmod S_{(4)},\\
\rho_2 &\equiv [[x_1,x_3],x_2] \bmod S_{(4)},\\
\rho_3 &\equiv [[x_1,x_3],x_2][[x_2,x_3],x_1]\equiv [[x_1,x_2],x_3] \bmod S_{(4)}.
 \end{aligned}
\]
We shall compute $|{\rm Epi}(G,\U_4(\F_2))|$. By Lemma~\ref{lem:equiv}, we can assume that $R$ is generated as a normal subgroup of $S$ by $r_1,r_2$, where
\[
r_1=[[x_2,x_3],x_1], \text{ and } r_2=[[x_1,x_3],x_2]. 
\]
Let $\chi_1,\chi_2,\chi_3$ be the dual basis to $x_1,x_2,x_3$ of $H^1(S,\F_2)=H^1(G,\F_2)$, i.e., $\chi_i(x_j)=\delta_{ij}$.
Then we have (see \cite[Appendix]{Vo2})
\[
 \begin{aligned}
  {\rm tr}_{r_1}\langle \chi_1,\chi_2,\chi_3\rangle&={\rm tr}_{r_1}\langle \chi_3,\chi_2,\chi_1\rangle= {\rm tr}_{r_1}\langle \chi_1,\chi_3,\chi_2\rangle={\rm tr}_{r_1}\langle \chi_2,\chi_3,\chi_1\rangle=1,\\
{\rm tr}_{r_2}\langle \chi_1,\chi_3,\chi_2\rangle&={\rm tr}_{r_2}\langle \chi_2,\chi_3,\chi_1\rangle= {\rm tr}_{r_2}\langle \chi_3,\chi_1,\chi_2\rangle={\rm tr}_{r_2}\langle \chi_2,\chi_1,\chi_3\rangle=1,\\
{\rm tr}_{r_1}\langle \chi_i,\chi_j,\chi_k\rangle&={\rm tr}_{r_2}\langle \chi_i,\chi_j,\chi_k\rangle=0 \text{ for all others $(i,j,k)$}.
 \end{aligned}
\]

Let $\varphi=(\alpha,\beta,\gamma)$ be any element in $H^1(G,\F_2)$. We write $\alpha=\sum_{i=1}^3a_i\chi_i$, $\beta=\sum_{i=1}^3b_i\chi_i$, $\gamma=\sum_{i=1}^3c_i\chi_i$, where $a_i,b_i,c_i\in \F_2$. Then we have
\[
\begin{aligned}
   {\rm tr}_{r_1}\langle\alpha,\beta,\gamma\rangle &= \sum_{i,j,k}a_ib_jc_k {\rm tr}_{r_1}\langle \chi_i,\chi_j,\chi_k\rangle=a_1b_2c_3+a_3b_2c_1+a_1b_3c_2+a_2b_3c_1,\\
   {\rm tr}_{r_2}\langle\alpha,\beta,\gamma\rangle &= \sum_{i,j,k}a_ib_jc_k {\rm tr}_{r_2}\langle \chi_i,\chi_j,\chi_k\rangle=a_1b_3c_2+a_2b_3c_1+a_3b_1c_2+a_2b_1c_3.
\end{aligned}
 \]
Let $A$ be the matrix  with $[a_1,a_2,a_3], [b_1,b_2,b_3], [c_1,c_2,c_3]$ as its rows. 
Then $\varphi\in {\rm TMP}(G,\F_p)$ if and only if $\det A\not=0$ and $\langle\alpha,\beta,\gamma\rangle=0$, if and only if 
\[
\begin{aligned} 
a_1b_2c_3+a_3b_2c_1+a_1b_3c_2+a_2b_3c_1+a_2b_1c_3+a_3b_1c_2&=\det(A)= 1\\
 a_1b_2c_3+a_3b_2c_1+a_1b_3c_2+a_2b_3c_1&=0\\
a_1b_3c_2+a_2b_3c_1+a_3b_1c_2+a_2b_1c_3&=0.
\end{aligned}
\]
This system is equivalent to
\[
\begin{aligned} 
 b_1(a_2c_3+a_3c_2)&=1\\
b_2(a_1c_3+a_3c_1)&=1\\
b_3(a_1c_2+a_2c_1)&=1.
\end{aligned}
\] 
 The number of solutions of the above system is $6$.  
Therefore by Proposition~\ref{prop:R small}, we  have $|{\rm Epi}(G,\U_4(\F_2))|=3\cdot 2^{10}$.
This, together with \cite[Theorem, (2)]{M}, implies that the number of Galois $\U_4(\F_2)$-extensions of $\Q$ unramified outside $\{13,61,937,\infty\}$ is
\[
\frac{|{\rm Epi}(G,\U_4(\F_2))|}{|{\rm Aut}(\U_4(\F_2))|}=\frac{3\cdot 2^{10}}{3\cdot 2^7}=8.
\]
In \cite{Vo2}, $(13,61,937)$ is called a triple of proper Borromean primes modulo 2. 
 \qedhere
\end{ex}

\section{The number of Galois $\U_4(\F_p)$-extensions of a local field}
\subsection{Demushkin groups}
Recall that a pro-$p$-group $G$ is said to be a Demushkin group if
\begin{enumerate}
\item $\dim_{\F_p} H^1(G,\F_p)<\infty,$ 
\item $\dim_{\F_p} H^2(G,\F_p)=1,$
\item  the cup product $H^1(G,\F_p)\times H^1(G,\F_p)\to H^2(G,\F_p)$ is a non-degenerate bilinear form.
\end{enumerate}
(In some literature related to Demushkin groups, condition (1) is relaxed to allow also countable infinite rank.)

We assume now that $G$ is an infinite Demushkin group. Then $G$ is a finitely generated topological group with $d(G) = \dim_{\F_p} H^1(G, \F_p)$ as the minimal number of topological generators. Condition (2) means that there is only one relation among a minimal system of generators for $G$; that is, we can find a minimal presentation of $G$ as above such that $S$ is a free pro-$p$-group of rank $d = d(G)$ on generators $x_1,x_2,\ldots,x_d$, and $R= \langle r\rangle$ is the closed
normal subgroup of $S$ generated by an element $r\in S^p[S,S]$. Hence $G/[G,G]$ is isomorphic to $(\Z_p)^{d-1} \times (\Z_p/qZ_p)$, where $q = q(G)$ is a uniquely determined power of $p$. (By convention $p^\infty=0$.) For convenience, we call $q(G)$ the $q$-invariant of Demushkin group $G$.
Condition (3) then implies that the bilinear form induced by cup product $(\cdot,\cdot)\colon H^1(G,\F_p)\times H^1(G,\F_p)\to H^2(G,\F_p)\stackrel{{\rm tr}_r}{\simeq}\F_p$ is non-degenerate. Note also that $(\cdot,\cdot)$ is skew-symmetric. From the classification of Demushkin groups \cite{La}, the relation $r$ takes the following form:
\begin{enumerate}
\item[(a)] if $q=q(G)\not=2$ ($d$ is even in this case), then 
\begin{equation}
 \label{eq:D1} r=x_1^q[x_1,x_2][x_3,x_4]\cdots[x_{d-1},x_d];
\end{equation}
\item[(b)] if $q=2$ and $d$ is odd, then 
\begin{equation}
 \label{eq:D2} r=x_1^2x_2^{2^f}[x_2,x_3][x_4,x_5]\cdots[x_{d-1},x_d],
 \end{equation}
 where $f$ is an integer $\geq 2$ or $\infty$;
\item[(c)] if $q=2$ and $d$ is even, then either
\begin{equation}
 \label{eq:D3} r=x_1^{2+2^f}[x_1,x_2][x_3,x_4]\cdots[x_{d-1},x_d],\\
\end{equation}
where $f$ is an integer $\geq 2$ or $\infty$, or 
\begin{equation}
\label{eq:D4} r=x_1^2[x_1,x_2]x_3^{2^f}[x_3,x_4]\cdots[x_{d-1},x_d],
\end{equation}
where $f$ is an integer $\geq 2$.
\end{enumerate}
Recall that a bilinear form on a vector space $V$ over a field $K$, $(\cdot,\cdot)\colon V\times V\to K$ is skew-symmetric if $(x,y)=-(y,x)$ for all $x,y\in V$.
We obtain the following result.
\begin{prop}
\label{prop:Demushkin-linear algebra}
 Let $G$ be a Demushkin group with $d(G)=d$ and $q=q(G)$. Let 
\[(\cdot,\cdot)\colon H^1(G,\F_p)\times H^1(G,\F_p)\stackrel{\cup}{\to} H^2(G,\F_p)\stackrel{{\rm tr}_r}{\simeq}\F_p\] be the non-degenerate skew-symmetric bilinear form induced by cup product.  
\begin{enumerate}
 \item If $q\not=2$ then there exists an $\F_p$-basis $v_1,v_2,\ldots,v_d$ of $\H^1(G,\F_p)$ such that $(v_i,v_i)=0$ for every $1\leq i\leq d$.
 \item If $q=2$ then there exists an $\F_p$-basis $v_1,v_2,\ldots,v_d$ of $\H^1(G,\F_p)$ such that $(v_1,v_1)=1$, and that $(v_i,v_i)=0$ for every $2\leq i\leq d$.
\end{enumerate}
\end{prop}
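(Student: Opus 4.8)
The plan is to reduce both assertions to a single dichotomy: the form $(\cdot,\cdot)$ is \emph{alternating}, meaning $(x,x)=0$ for all $x\in H^1(G,\F_p)$, precisely when $q\neq 2$. Granting this, assertion (1) is immediate, because if $(x,x)=0$ identically then \emph{every} $\F_p$-basis $v_1,\dots,v_d$ of $H^1(G,\F_p)$ already satisfies $(v_i,v_i)=0$ for all $i$; and assertion (2) will follow from a one-line linear-algebra observation over $\F_2$.

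To treat the case $q\neq 2$ I would show $(\cdot,\cdot)$ is alternating as follows. If $p\neq 2$ this is automatic from skew-symmetry, since $(x,x)=-(x,x)$ forces $(x,x)=0$. If $p=2$, then $q$ is a power of $2$ with $q\geq 4$ (or $q=\infty$), so $4\mid q$ under the convention $p^\infty=0$. I would then use two standard facts: first, $x\cup x=\beta(x)$ for every $x\in H^1(G,\F_2)$, where $\beta\colon H^1(G,\F_2)\to H^2(G,\F_2)$ is the Bockstein attached to $0\to\Z/2\to\Z/4\to\Z/2\to 0$; second, $\beta(x)=0$ if and only if $x\colon G\to\Z/2$ lifts to a homomorphism $G\to\Z/4$. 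Every $x$ factors through $G^{\rm ab}\cong(\Z_2)^{d-1}\times\Z_2/q\Z_2$, and since $\Z_2\twoheadrightarrow\Z/4\twoheadrightarrow\Z/2$ and, because $4\mid q$, also $\Z_2/q\Z_2\twoheadrightarrow\Z/4\twoheadrightarrow\Z/2$, every $\F_2$-character of $G^{\rm ab}$ lifts through $\Z/4\to\Z/2$. Hence $\beta\equiv 0$, so $(x,x)={\rm tr}_r(x\cup x)=0$ for all $x$, and any basis works.

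For the case $q=2$, which forces $p=2$, I would first observe that the self-pairing map $Q\colon H^1(G,\F_2)\to\F_2$, $Q(x):=(x,x)$, is $\F_2$-linear: by skew-symmetry $(x,y)=-(y,x)=(y,x)$ in characteristic $2$, so $Q(x+y)=(x,x)+(x,y)+(y,x)+(y,y)=Q(x)+Q(y)$. It then suffices to check $Q\not\equiv 0$: once that is known, $U:=\ker Q$ is a hyperplane of dimension $d-1$, and choosing any basis $v_2,\dots,v_d$ of $U$ together with any $v_1\in H^1(G,\F_2)\setminus U$ produces a basis $v_1,\dots,v_d$ with $(v_1,v_1)=1$ and $(v_i,v_i)=0$ for $i\geq 2$, which is assertion (2). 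To see $Q\not\equiv 0$, I would use that $q(G)=2$ means the torsion subgroup of $G^{\rm ab}$ is cyclic of order $2$: writing $G^{\rm ab}\cong A\times\Z/2$ with $A$ torsion-free and letting $\chi_0\colon G\twoheadrightarrow G^{\rm ab}\to\Z/2$ be the projection onto the second factor, any lift of $\chi_0$ to $G\to\Z/4$ would restrict on the $\Z/2$-summand to a homomorphism $\Z/2\to\Z/4$ with image in $\{0,2\}\subseteq\Z/4\Z$, hence reducing to $0$ in $\Z/2$, contradicting $\chi_0|_{\Z/2}={\rm id}$. Thus $\beta(\chi_0)\neq 0$, i.e.\ $Q(\chi_0)\neq 0$.

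I expect the only non-formal input to be the identity $x\cup x=\beta(x)$ together with the lifting description of $\ker\beta$; the rest is elementary linear algebra over $\F_p$. A more computational alternative avoids the Bockstein altogether: by the degree-$2$ case of the Magnus-expansion formula for cup products (cf.\ \cite[Appendix]{Vo2}, \cite{La}), ${\rm tr}_r(\chi_i\cup\chi_j)$ is the coefficient of $X_iX_j$ in the Magnus expansion of the relator $r$ in the basis $\chi_1,\dots,\chi_d$ dual to $x_1,\dots,x_d$; since each $p$-power factor of $r$ contributes no linear term modulo $p$, one reads off from the normal forms of $r$ listed above that $(\chi_i,\chi_i)=0$ for all $i$ when $q\neq 2$, and $(\chi_1,\chi_1)=1$ with $(\chi_i,\chi_i)=0$ for $i\geq 2$ when $q=2$, so in fact $v_i=\chi_i$ works directly. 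Either way, the whole content of the proposition is the computation of $(x,x)$ on the diagonal.
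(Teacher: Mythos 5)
Your proof is correct, but it takes a genuinely different route from the paper's. The paper works with the dual basis $v_1,\dots,v_d$ to a minimal generating set $x_1,\dots,x_d$: for $p>2$ skew-symmetry gives the claim at once (exactly your first observation), while for $p=2$ it reads off the diagonal values $(v_i,v_i)$ directly from Labute's normal forms \eqref{eq:D1}--\eqref{eq:D4} of the single relator $r$ via \cite[Proposition 3.9.13]{NSW}; thus the classification of Demushkin groups is the essential input, and the dual basis itself serves as the required basis. Your main argument replaces this with the identity $x\cup x=\beta(x)$, where $\beta$ is the Bockstein of $0\to\Z/2\to\Z/4\to\Z/2\to 0$, so that $(x,x)\neq 0$ precisely when $x$ does not lift to $\Z/4$; since every $\F_2$-character of $G^{\rm ab}\cong\Z_2^{d-1}\times\Z_2/q\Z_2$ lifts when $4\mid q$ (or $q=0$), while the projection onto the $\Z/2$-torsion factor does not lift when $q=2$, you obtain the alternating/non-alternating dichotomy using only the structure of $G^{\rm ab}$ --- essentially the definition of $q(G)$ --- with no appeal to the relator normal forms; the linearity of $Q(x)=(x,x)$ in characteristic $2$ and the hyperplane $\ker Q$ then manufacture the basis in case (2). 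Your route is classification-free and more intrinsic, at the cost of invoking the (standard, and easily verified on cochains) Bockstein identity; the paper's computation is shorter once the classification is granted and exhibits the basis explicitly. Your closing ``computational alternative'' is essentially the paper's proof, with the small caveat that the relevant contribution of the $p$-power factor of $r$ is the quadratic Magnus coefficient, $\binom{q}{2}\equiv 1\pmod 2$ if and only if $q\equiv 2\pmod 4$, rather than the vanishing of its linear term.
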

\begin{proof} 
We already observed above that the form $(\cdot,\cdot)$ is non-degenerate, skew-symmetric and bilinear. 

Let $v_1,v_2,\ldots,v_d$ be the dual basis to $x_1,x_2,\ldots,x_d$ of $H^1(S,\F_p)=H^1(G,\F_p)$. That means that $v_i(x_j)=\delta_{ij}$, where $\delta_{ij}\in\{0,1\}$  is the Kronecker $\delta$ function.  
If $p>2$ then $(v_i,v_i)=0$ for every $i=1,2,\ldots,d$ because our pairing is a skew-symmetric  bilinear form. 

We now assume  that $p=2$. We shall apply \cite[Proposition 3.9.13]{NSW} to the case of descending $2$-central series. Then we see that $(v_1,v_1)=1$ and $(v_i,v_i)=0$ for all $i=2,3,\ldots,d$ in the cases \ref{eq:D2}, \ref{eq:D3} and \ref{eq:D4}. In the case \ref{eq:D1} and $p=2$, we see that $q\geq 4$. Hence we obtain that $(v_i,v_i)=0$ for all $i=1,2\ldots,d$
\end{proof}
\subsection{Euler-Poincar\'e characteristics} 
Let $G$ be a pro-$p$-group. Assume that $H^i(G,\F_p)$ has finite dimension over $\F_p$ for each $i$. Further assume that $G$ has a finite cohomological dimension $c$. 

Let $A$  be a finite (discrete) $G$-module  of  $p$-power order. Then we also have $\dim_{\F_p} H^i(G,A)<+\infty$ for each $i$ and $\dim_{\F_p} H^i(G,A)=0$ for  $i>c$ (see \cite[Section 5.1]{Ko}). We define
\[
\chi(G,A)=\sum_{i=0}^\infty (-1)^i\dim_{\F_p} H^i(G,A).
\]
When $A=\F_p$ with trivial $G$-action, $\chi(G,\F_p)$ is the {\it Euler-Poincar\'e characteristic} of $G$. 

For a $G$-module $A$ of order $p^r$, one has (see \cite[Section 5.2]{Ko})
\[
\chi(G,A)=r \chi(A,\F_p).
\]
\begin{lem}
\label{lem:EP}
  Let $G$ be a  Demushkin pro-$p$-group of rank $d$. Let $A$ be a finite $G$-module of order $p^r$. Then
\[
\chi(G,A)=r(2-d).
\]
\end{lem}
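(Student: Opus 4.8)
The plan is to reduce the computation of $\chi(G,A)$ for an arbitrary finite $G$-module $A$ of order $p^r$ to the already-recalled case $A = \F_p$. The key identity is the one quoted just above the lemma from \cite[Section 5.2]{Ko}: for any finite $G$-module $A$ of order $p^r$ one has $\chi(G,A) = r\,\chi(G,\F_p)$. (Note the statement in the excerpt is written $\chi(G,A)=r\chi(A,\F_p)$, which I read as a typo for $r\chi(G,\F_p)$; the Euler characteristic depends on $G$ and the coefficient $\F_p$, not on $A$.) Granting this, it suffices to show $\chi(G,\F_p) = 2 - d$, and then $\chi(G,A) = r(2-d)$ follows immediately.

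So the whole proof comes down to computing $\chi(G,\F_p)$ for a Demushkin group $G$ of rank $d$. First I would note that a Demushkin group has cohomological dimension $2$: indeed $H^2(G,\F_p) \cong \F_p$ is one-dimensional and nonzero, and $H^i(G,\F_p) = 0$ for $i > 2$ (this is part of the standard theory of Demushkin/Poincaré-duality pro-$p$-groups of dimension $2$, and $G$ being a Demushkin group of finite rank is such a group). Then by definition
\[
\chi(G,\F_p) = \dim_{\F_p} H^0(G,\F_p) - \dim_{\F_p} H^1(G,\F_p) + \dim_{\F_p} H^2(G,\F_p).
\]
Here $H^0(G,\F_p) = \F_p$ has dimension $1$; $\dim_{\F_p} H^1(G,\F_p) = d(G) = d$ is the minimal number of generators (as recalled in the Demushkin subsection); and $\dim_{\F_p} H^2(G,\F_p) = 1$ is condition (2) in the definition of a Demushkin group. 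Hence $\chi(G,\F_p) = 1 - d + 1 = 2 - d$, and the lemma follows by multiplying by $r$.

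The only genuine content is therefore the two inputs "$\cd(G) \le 2$" and "$\chi(G,A) = r\chi(G,\F_p)$", both of which are cited from Koch's book and may legitimately be quoted here. I do not anticipate a real obstacle; the one point to be careful about is that $\cd(G) = 2 < \infty$ (not just $\cd_p$, but since $G$ is pro-$p$ these coincide), so that $\chi(G,A)$ is well-defined as a finite alternating sum and the hypotheses of the cited multiplicativity result are met. If one wanted a self-contained argument for $\cd(G) \le 2$ one could invoke that a finitely generated pro-$p$-group with $\dim_{\F_p}H^2(G,\F_p) = 1$ and nondegenerate cup product is a Poincaré duality group of dimension $2$ (Labute), but citing \cite{Ko} is cleaner and consistent with the style of this section.
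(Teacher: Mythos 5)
Your proof is correct and follows essentially the same route as the paper: invoke that a Demushkin group is a Poincar\'e (duality) pro-$p$-group of dimension $2$, hence $\cd(G)=2$ with $\dim H^0=\dim H^2=1$ and $\dim H^1=d$, so $\chi(G,\F_p)=2-d$, and then apply the multiplicativity formula from Koch (which the paper indeed writes with the same typo, $r\chi(A,\F_p)$ for $r\chi(G,\F_p)$). No gaps.
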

\begin{proof}
Since $G$ is a Demushkin group, $G$ is a Poincar\'e group of dimension 2. This implies  in particular that $G$ is cohomological dimension 2, and that $\dim_{\F_p} H^2(G,\F_p)=\dim_{\F_p}H^0(G,\F_p) =1$. Hence $\chi(A,\F_p)=2-d$, and the statement follows.
\end{proof}
\begin{rmk}
Let $K$ be a local field which is a finite extension of degree $n$ of $\Q_p$. Assume that $K$ contains a primitive $p$-th root of unity.  Then the Galois group of a maximal $p$-extension of $K$ is a Demushkin group of rank $n+2$. Lemma~\ref{lem:EP} implies the following result, which is a special but important case of a result of Tate. (See \cite[Chapter II, \S 5.7, Theorem 5 and \S 5.6 Proposition 20]{Se}, \cite[Theorem 7.3.1]{NSW}.)
\begin{thm}[Tate]
\label{thm:Tate} If the order of $A$ is $p^r$, then we have 
\[
\frac{|H^0(K,A)|\cdot |H^2(K,A)|}{|H^1(K,A)|}=p^{-rn}.
\]
\end{thm}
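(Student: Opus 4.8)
The plan is to recognize the left-hand side as the exponential of an Euler--Poincar\'e characteristic and to invoke Lemma~\ref{lem:EP}. Recall from the beginning of the remark that, since $K/\Q_p$ has degree $n$ and contains a primitive $p$-th root of unity, $G:=G_K(p)$ is a Demushkin pro-$p$-group of rank $d(G)=n+2$ (a theorem of Demushkin, together with the local computation $\dim_{\F_p}H^1(K,\F_p)=n+2$); in particular $G$ is a Poincar\'e group of dimension $2$, so for every finite $p$-primary module $A$ the groups $H^i(G,A)$ are finite and vanish for $i\geq 3$. Then I would reduce the target to an additive statement: since $\mathrm{cd}(G)=2$,
\[
\frac{|H^0(G,A)|\cdot|H^2(G,A)|}{|H^1(G,A)|}=p^{\,\chi(G,A)},
\]
so the claimed identity is equivalent to $\chi(G,A)=-rn$.

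The heart of the argument is then a single line: if $A$ is a finite $G$-module of order $p^r$, Lemma~\ref{lem:EP} gives $\chi(G,A)=r(2-d(G))=r\bigl(2-(n+2)\bigr)=-rn$, exactly as needed. This already establishes the theorem for all coefficient modules on which $G_K$ acts through its maximal pro-$p$ quotient $G$ — in particular for $A$ with trivial action, and, since $\zeta_p\in K$, for $\mu_p\cong\Z/p$.

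The remaining point, which I expect to be the real obstacle, is the passage from $G=G_K(p)$ to the full absolute Galois group $G_K$: for a general finite $G_K$-module $A$ of $p$-power order the action need not factor through $G$, so $H^i(K,A)=H^i(G_K,A)$ is not computed directly by Lemma~\ref{lem:EP}. Here I would fall back on the classical d\'evissage (see \cite{Se}): the multiplicative Euler characteristic is additive along short exact sequences of finite $p$-primary $G_K$-modules — legitimate because $G_K$ has finite cohomological dimension on $p$-torsion coefficients and all the relevant cohomology groups are finite — and it is compatible with Shapiro's lemma, $\chi(G_K,\mathrm{Ind}_{G_L}^{G_K}B)=\chi(G_L,B)$, together with $[L:\Q_p]=[L:K]\cdot n$ for finite extensions $L/K$. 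Filtering $A$ and inducing up from a suitable open subgroup $G_L$ then reduces the general statement to coefficient modules of trivial action over finite extensions of $K$, which is the case handled above. Thus the genuinely new ingredient is only the clean evaluation of the base case via the Demushkin structure of $G_K(p)$; the surrounding reduction is classical.
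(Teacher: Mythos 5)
Your first paragraph reproduces exactly the paper's argument: the paper derives this theorem in one line by noting that $G_K(p)$ is a Demushkin group of rank $d=n+2$ and applying Lemma~\ref{lem:EP}, so that $\chi(G,A)=r(2-d)=-rn$, which is the displayed identity in multiplicative form. The dévissage you add in the final paragraph is not required for what is actually being asserted — the statement is presented as a \emph{special} case of Tate's theorem, for finite modules over the Demushkin group $G=G_K(p)$, and for the general result the paper simply cites Serre and NSW — so your proof matches the paper's on the intended statement.
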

\end{rmk}
\subsection{Cohomology of certain modules} In this subsection we assume that $G$ is a Demushkin pro-$p$-group of rank $d$ with $d\geq 3$.

Recall that we have the following exact sequence of finite groups
\[
1 \longrightarrow M \longrightarrow \U_4(\F_p) \xrightarrow{(a_{12},a_{23},a_{34})} (\F_p)^3\longrightarrow 1,
\]
where $a_{ij}\colon\U_4(\F_p)\to\F_p$ is the map sending a matrix to its $(i,j)$-coefficient. This exact sequence induces a $B:=(\F_p)^3$-module structure on $M$.

Now let $\varphi\colon G\to B$ be any continuous homomorphism. We consider $M$ as a $G$-module, denoted $M_\varphi$, via $\varphi$ and the action of $B$ on $M$. Note that $pM=\{0\}$.
\begin{lem}
\label{lem:counting Z1}
\mbox{}
\begin{enumerate}
\item If $\im \varphi \subseteq \{0\}\times \F_p\times \{0\}$ then $|Z^1(G,M_\varphi)|=p^{3d}$.
\item If $\im \varphi \not\subseteq \{0\}\times \F_p\times \{0\}$ then $|Z^1(G,M_\varphi)|=p^{3d-1}$.
\end{enumerate}
\end{lem}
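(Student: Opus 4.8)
The plan is to compute the order of $Z^1(G,M_\varphi)$ via the Euler--Poincar\'e characteristic formula of Lemma~\ref{lem:EP}, combined with explicit knowledge of $H^0$ and $H^2$. First I would recall that $M_\varphi$ is an $\F_p$-vector space of dimension $3$ (the three entries $a_{13},a_{24},a_{14}$ of a matrix in $M$), so $|M_\varphi|=p^3$ and Lemma~\ref{lem:EP} gives $\chi(G,M_\varphi)=3(2-d)$, i.e. $\dim H^0 - \dim H^1 + \dim H^2 = 3(2-d)$. Since $Z^1 = B^1 \oplus (\text{complement})$ is related to $H^1$ and $B^1$ by the exact sequence $0\to H^0(G,M_\varphi)\to M_\varphi \to B^1(G,M_\varphi)\to 0$, we get $\dim Z^1 = \dim H^1 + \dim B^1 = \dim H^1 + 3 - \dim H^0$. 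Substituting the $\chi$ relation, $\dim Z^1 = 3 - \dim H^0 + (\dim H^0 + \dim H^2 - 3(2-d)) = 3 + \dim H^2 + 3(d-2) = 3d - 3 + \dim H^2$. So everything reduces to computing $\dim_{\F_p} H^2(G,M_\varphi)$, and I must show it equals $3$ in case (1) and $2$ in case (2).

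For the $H^2$ computation I would use Poincar\'e duality for the Demushkin (hence Poincar\'e, dimension $2$) group $G$: $H^2(G,M_\varphi)$ is dual to $H^0(G,M_\varphi^\vee(1))$ where $M_\varphi^\vee$ is the $\F_p$-linear dual with the contragredient action (the Tate twist is trivial mod $p$ in the relevant sense, or can be absorbed; more precisely $H^2(G,M_\varphi)\cong H^0(G, \Hom(M_\varphi, D))^\vee$ where $D\cong\F_p$ is the dualizing module, which for a Demushkin group has trivial action up to the cyclotomic character — but since we only need dimensions and the twist by a character doesn't change dimension of invariants of a module on which $B$ acts unipotently through $\varphi$, I can work directly). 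The cleaner route: since $\varphi(G)$ acts on $M$ through the subgroup $\varphi(G)\subseteq B=(\F_p)^3$, and $B$ acts on $M\cong \F_p^3$ by a unipotent representation, $H^0(G,M_\varphi)=M^{\varphi(G)}$ is computed from the explicit action, and likewise $H^2$ via duality equals $\dim (M^\vee)^{\varphi(G)}$. I would then just enumerate: if $\im\varphi\subseteq \{0\}\times\F_p\times\{0\}$ then the only nontrivial $B$-action on $M$ comes from the middle coordinate, which acts trivially on $M$ (since conjugation by a matrix with only an $a_{23}$-entry is trivial on the $a_{13},a_{24},a_{14}$ block — this needs a one-line matrix check), so $M_\varphi$ is the trivial module, $H^0$ and $H^2$ are both $3$-dimensional, giving $\dim Z^1 = 3d-3+3 = 3d$. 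If $\im\varphi\not\subseteq\{0\}\times\F_p\times\{0\}$, then either the first or third coordinate of $\varphi$ is nonzero; conjugation by a matrix with nonzero $a_{12}$ (or $a_{34}$) acts on $M$ with a nontrivial unipotent part, and one checks the fixed space $M^{\varphi(G)}$ is exactly $2$-dimensional (the $a_{14}$-entry is always fixed, plus one more), and dually $H^2$ is $2$-dimensional, giving $\dim Z^1 = 3d-3+2 = 3d-1$.

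The main obstacle I anticipate is pinning down $\dim H^2(G,M_\varphi)$ rigorously rather than $\dim H^0$: one cannot simply assert $H^2\cong H^0$ without invoking duality with the correct dualizing module and checking that the cyclotomic twist does not alter the dimension of the coinvariants/invariants. Concretely, $H^2(G,M_\varphi) \cong H_0(G, M_\varphi)^{\vee}$ up to a twist when $G$ is a Poincar\'e group of dimension $2$, so $\dim H^2(G,M_\varphi) = \dim (M_\varphi)_G = \dim M_\varphi - \dim (\text{augmentation-type subspace})$; I would verify that for these unipotent actions $\dim (M_\varphi)_G = \dim (M_\varphi)^G$ (true because the action factors through a finite $p$-group acting on an $\F_p$-space, where invariants and coinvariants have equal dimension). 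Once that symmetry is in hand, $\dim H^2 = \dim H^0$, and the case analysis above via explicit matrix conjugation finishes the proof. I would double-check the conjugation formulas in $\U_4(\F_p)$ carefully, since the whole dichotomy hinges on which one-parameter subgroups of $B$ act nontrivially on which coordinates of $M$.
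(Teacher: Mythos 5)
Your reduction $\dim_{\F_p} Z^1(G,M_\varphi)=\dim_{\F_p}H^2(G,M_\varphi)+3d-3$ via Lemma~\ref{lem:EP} is exactly the paper's first step, and your worry about the dualizing twist is resolvable for the right underlying reason ($G$ is pro-$p$ and $\F_p^\times$ has order prime to $p$, so the mod-$p$ dualizing character is trivial and $\Hom(M_\varphi,\F_p)$ with the contragredient action is the correct dual module). The genuine gap is in how you evaluate $\dim H^2$. Duality gives $\dim H^2(G,M_\varphi)=\dim\bigl(\Hom(M_\varphi,\F_p)\bigr)^{G}$, which is the dimension of the \emph{coinvariants} $(M_\varphi)_G$, whereas you compute the \emph{invariants} $M_\varphi^{\varphi(G)}=H^0(G,M_\varphi)$ and bridge the two by asserting that invariants and coinvariants of a finite $p$-group acting on an $\F_p$-space have equal dimension. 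That assertion is false in general, and it fails for precisely the module at hand: writing $M=\{(a,b,c)\}$ for the entries at positions $(1,3),(2,4),(1,4)$, the element $(x,y,z)\in B$ acts by $(a,b,c)\mapsto(a,b,c+xb-az)$, so if $\im\varphi$ projects onto a two-dimensional subspace of the $(x,z)$-plane --- in particular whenever $\varphi$ is surjective, which is the case actually used via Proposition~\ref{prop:counting Epi} --- then $M_\varphi^{\varphi(G)}$ is the one-dimensional $a_{14}$-line, not two-dimensional as you claim, while $(M_\varphi)_G$ is two-dimensional. So in the key subcase both your explicit count of the fixed space and the invariants-equals-coinvariants step are wrong; the two errors happen to cancel numerically, but the argument as written does not establish $\dim H^2=2$.

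The repair is what the paper does: compute the invariants of the dual module directly. Either use the explicit matrix description of the $B$-action on $\Hom(M,\F_p)$ from \cite[Lemma 5.2]{MT2}, or equivalently note that $\sum_{g\in\varphi(G)}\im(g-1)\subseteq M$ is $0$ in case (1) and is exactly the $a_{14}$-line in every subcase of case (2), so that $\dim\bigl(\Hom(M_\varphi,\F_p)\bigr)^G=\dim (M_\varphi)_G$ equals $3$ in case (1) and $2$ in case (2), independently of whether the $(x,z)$-projection of $\im\varphi$ has dimension one or two. With that substitution the rest of your computation goes through and gives $|Z^1(G,M_\varphi)|=p^{3d}$ and $p^{3d-1}$ as claimed.
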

\begin{proof} For simplicity, we shall write $M$ for $M_\varphi$. 
We  note that $\ker(d\colon M\to C^1(G,M))=H^0(G,M)$. Hence $|B^1(G,M)|=|\im(d)|=|M/H^0(G,M)|$. Therefore 
\begin{equation}
\label{eq:Z1}
\begin{aligned}
 \dim_{\F_p} Z^1(G,M)| &=\dim_{\F_p} H^1(G,M)+\dim_{\F_p}B^1(G,M)\\
 &= \dim_{\F_p} H^1(G,M)-\dim_{\F_p}H^0(G,M)+\dim_{\F_p}M\\
 &=\dim_{\F_p}H^2(G,M)+3-\chi(G,M)\\
 &=\dim_{\F_p}H^2(G,M) +3d-3.
 \end{aligned}
 \end{equation}
(Since $|\U_4(\F_p)|=p^6$, we see that $\dim_{\F_p}(M)=3$. The last equality follows from Lemma~\ref{lem:EP}.)

Let $M^\prime={\rm Hom}(M,\F_p)$ be the dual $B$-module of the $B$-module $M$. If we consider $M^\prime$ as a $G$-module via the map $\varphi\colon G\to B\to {\rm Aut}(M^\prime)$, then $M^\prime$ is the dual $G$-module of the $G$-module $M$. 
The  duality theorem for 2-dimensional Poincar\'e groups implies that $|H^2(G,M)|=|H^0(G,M^\prime)|$ (see \cite[Chapter I, \S4, Proposition 30]{Se}).

By \cite[Lemma 3.2]{MT2}, there is an $\F_p$-basis of $M^\prime$ such that with respect to this basis the structure map $\alpha\colon B\to {\rm Aut}(M^\prime)$ becomes the map $\alpha\colon B\to {\rm GL_3}(\F_p)$, which sends $(x,y,z)\in B$ to 
$\begin{bmatrix}
1 & 0 &-x\\
0& 1 & z\\
 0& 0 &1
\end{bmatrix}.$ It is then straightforward to check that 
\[
H^0(G,M^\prime)\simeq 
\begin{cases}
\{ \begin{bmatrix} a\\ b \\ c\end{bmatrix} \mid a,b,c\in \F_p\} \;\text{ if }\im \varphi \subseteq \{0\}\times\F_p\times\{0\}\\\\
\{ \begin{bmatrix} a\\ b \\ 0\end{bmatrix} \mid a,b\in \F_p\} \; \text{ if }\im \varphi\not \subseteq \{0\}\times\F_p\times\{0\}.
\end{cases}
\]
Hence $\dim_{\F_p}H^2(G,M)=\dim_{\F_p} H^0(G,\F_p)=3$ (if we are in part (1)), or $2$ (if we are in part (2)). The lemma then follows from Equation \ref{eq:Z1}
\end{proof}
\subsection{A result in linear algebra}
\begin{lem}
\label{lem:LA}
Let $V$ be an $\F_p$-vector space of dimension $d\geq 3$ with basis $v_1,v_2,\ldots,v_d$. Let $(\cdot,\cdot)\colon V\times V\to \F_p$ be a non-degenerate skew-symmetric bilinear form on $V$. Let $N$ be the number of triples $(x,y,z)\in V\times V\times V$ such that $(x,y)=(y,z)=0$ and that $x,y,z$ are $\F_p$-linearly independent.
\begin{enumerate}

\item If $(v_i,v_i)=0$ for every $1\leq i\leq d$, then 
\[N=(p^d-1)(p^{d-1}-p)(p^{d-1}-p^2).\]
\item If $(v_1,v_1)=1$ and $(v_i,v_i)=0$ for every $2\leq i\leq d$, then $p=2$ and
\[N=(2^{d-1}-1)(2^{d-1}-2)(2^d-4).\]
\end{enumerate}
\end{lem}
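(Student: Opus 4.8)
The plan is to count the triples $(x,y,z)$ by choosing $y$ first, then $x$ and $z$ from the orthogonal hyperplane of $y$, and finally subtracting the configurations that fail to be linearly independent. In both cases the key fact is that for a non-degenerate skew-symmetric form, for any nonzero $y$ the set $y^\perp=\{v\in V:(v,y)=0\}$ is a hyperplane of dimension $d-1$ (this uses non-degeneracy only), and it always contains $y$ itself since $(y,y)=0$ by skew-symmetry when $p>2$, and in case (2) we will have to be slightly more careful because $(y,y)$ need not vanish. So first I would handle case (1). The condition $(x,y)=(y,z)=0$ says precisely $x,z\in y^\perp$ for $y\ne 0$; if $y=0$ the triple can never be linearly independent, so we may assume $y\ne 0$, giving $p^d-1$ choices. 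Then $x$ ranges over $y^\perp\setminus\langle y\rangle$ (we need $x,y$ independent, and $y\in y^\perp$), which has $p^{d-1}-p$ elements, and finally $z$ ranges over $y^\perp$ avoiding the plane $\langle x,y\rangle$, which lies inside $y^\perp$ (since both $x,y\in y^\perp$), giving $p^{d-1}-p^2$ choices. Multiplying yields $(p^d-1)(p^{d-1}-p)(p^{d-1}-p^2)$, as claimed.

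For case (2) the form has $(v_1,v_1)=1$, and by hypothesis $p=2$ (a non-zero value on the diagonal of an alternating-up-to-sign form forces characteristic $2$; in odd characteristic skew-symmetric implies alternating). Here the subtlety is that $(y,y)$ depends on $y$: writing $y=\sum y_iv_i$, one has $(y,y)=y_1^2=y_1$ over $\F_2$, so $y\in y^\perp$ if and only if the $v_1$-coordinate of $y$ is zero. The count should be organized by first choosing $y\ne 0$ with $y_1=0$; there are $2^{d-1}-1$ such $y$, and these are exactly the $y$ for which $y^\perp$ contains $y$ and hence for which there is any hope of getting the hyperplane configuration inside $y^\perp$ to behave as in case (1). (One must check that a $y$ with $y_1=1$ contributes nothing, or rather recompute: if $y\notin y^\perp$ then $x$ and $z$ range over $y^\perp$, a hyperplane not containing $y$, and the independence conditions change; the asserted formula $(2^{d-1}-1)(2^{d-1}-2)(2^d-4)$ strongly suggests only the $2^{d-1}-1$ "isotropic" $y$'s count, so presumably the non-isotropic $y$'s give a contribution that cancels or, more likely, the claim as proved restricts to that case — I would double-check against the proof, but my plan is to show the non-isotropic $y$ contribute zero net once independence is imposed, or to re-examine the indexing.) Granting that, for each such $y$ we pick $x\in y^\perp$ with $x\notin\langle y\rangle$, giving $2^{d-1}-2$ choices, and then $z\in y^\perp$ with $z\notin\langle x,y\rangle$; since $\langle x,y\rangle\subseteq y^\perp$ and $\dim y^\perp=d-1$, naively this is $2^{d-1}-4$ — but the target formula has $2^d-4$, not $2^{d-1}-4$, so I expect the correct bookkeeping is instead: $z$ is only required to satisfy $(y,z)=0$, i.e. $z\in y^\perp$ of size $2^{d-1}$, minus the $z\in\langle x,y\rangle$ which is... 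Here is where I must be most careful.

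Reconsidering, I suspect the right decomposition for case (2) is: choose $x$ and $z$ first from the quotient picture. Concretely, quotient $V$ by the radical considerations or split $V=\langle v_1\rangle\oplus W$ where $W=v_1^\perp$ carries a non-degenerate alternating form of dimension $d-1$. The conditions $(x,y)=(y,z)=0$ with $x,y,z$ independent, combined with the structure, should reduce to counting linearly independent triples in the $(d-1)$-dimensional symplectic space $W$ together with the freedom in the $v_1$-direction; the factor $2^d-4 = 2\cdot(2^{d-1}-2)$ is exactly "a choice in $W$ of size $2^{d-1}-2$ doubled by the $v_1$-coordinate being free." So the cleanest route is: project everything to $W$, apply case (1) to the symplectic form on $W$ (dimension $d-1$, giving the factors $2^{d-1}-1$ and $2^{d-1}-2$ for the images $\bar y$ and $\bar x$), and then lift, accounting for the extra binary coordinate in the right slots — the $z$-coordinate in the $v_1$-direction being the unconstrained one, yielding the factor $2$ and hence $2^d-4$.

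The main obstacle I anticipate is precisely this case-(2) bookkeeping: correctly tracking which linear combinations must be excluded for the independence condition once the form is no longer alternating, and matching the split "$v_1$-coordinate" degrees of freedom to the stated factorization $(2^{d-1}-1)(2^{d-1}-2)(2^d-4)$. The first case is essentially a three-step greedy count in a hyperplane and should be routine; everything delicate is concentrated in reconciling the anomalous diagonal entry with the dimension count in characteristic $2$.
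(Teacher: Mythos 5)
Your count in case (1) is correct and is exactly the paper's argument (choose $y\neq 0$, note $y\in y^{\perp}$, then pick $x\in y^{\perp}\setminus\langle y\rangle$ and $z\in y^{\perp}\setminus\langle x,y\rangle$); the only slip there is attributing $(y,y)=0$ to skew-symmetry ``when $p>2$'' --- for $p=2$ in case (1) it follows instead from the basis hypothesis $(v_i,v_i)=0$ together with the cancellation of the cross terms, which is how the paper argues.

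Case (2), however, has a genuine gap, and your two proposed escape routes both fail. The non-isotropic $y$ (those with $(y,y)=1$, equivalently with $v_1$-coefficient $1$) do \emph{not} contribute zero: for such $y$ one has $y\notin y^{\perp}$, so for any $x,z\in y^{\perp}$ the span $\langle x,z\rangle$ lies in $y^{\perp}$ and hence cannot contain $y$; thus $x,y,z$ are independent exactly when $x,z$ are independent in the $(d-1)$-dimensional space $y^{\perp}$, giving $(2^{d-1}-1)(2^{d-1}-2)$ pairs for each of the $2^{d-1}$ such $y$. Adding this to the isotropic contribution $(2^{d-1}-1)(2^{d-1}-2)(2^{d-1}-4)$ (your ``naive'' count, which is the right one for those $y$) yields $(2^{d-1}-1)(2^{d-1}-2)\bigl[(2^{d-1}-4)+2^{d-1}\bigr]=(2^{d-1}-1)(2^{d-1}-2)(2^{d}-4)$, which is where the factor $2^{d}-4$ actually comes from --- not from a free $v_1$-coordinate. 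Your alternative reduction via $V=\langle v_1\rangle\oplus W$ with $W=v_1^{\perp}$ is also unsound: in characteristic $2$ the map $w\mapsto(w,w)$ is the linear functional picking out the $v_1$-coefficient, and vectors of $v_1^{\perp}$ can have $v_1$-coefficient $1$ whenever some $(v_1,v_j)\neq 0$ with $j\geq 2$ (e.g.\ the Gram matrix $\left(\begin{smallmatrix}1&1&0\\1&0&1\\0&1&0\end{smallmatrix}\right)$ over $\F_2$), so the restricted form on $W$ need not be alternating and case (1) cannot simply be applied to it; moreover orthogonality and independence in $V$ do not reduce to the corresponding conditions on projections to $W$. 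The correct bookkeeping is the two-case split over isotropic versus non-isotropic $y$ described above, which is the paper's proof.
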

\begin{proof}
 For each $y\in V\setminus \{0\}$, we denote 
\[
 y^\perp =\{ x\in V\mid (x,y)=0\}.
\]
Then $y^\perp$ is an $\F_p$-vector space, and $\dim y^\perp=\dim V -1= d-1$ since the pairing $(\cdot,\cdot)$ is non-degenerate. We set 
\[
\begin{aligned}
 M(y)&:=\{(x,z)\in V\times V\mid (x,y)=(y,z)=0; x,y,z \text{ are $\F_p$-linearly independent}\}\\
&=\{(x,z)\in y^\perp\times y^\perp\mid x,y,z \text{ are $\F_p$-linearly independent}\}.
\end{aligned}
\]

(1) We claim that $y\in y^\perp$. Indeed, writing $y=\sum_{i\in I} a_i v_i$, $a_i\in \F_p$, $I\subseteq \{1,2,\ldots,d\}$, then 
\[
\begin{aligned}
 (y,y)&=(\sum_{i\in I} a_i v_i, \sum_{j\in I} a_j v_j)=\sum_{i,j\in I}a_ia_j(v_i,v_j)\\
&=\sum_{i\in I} a_i^2(v_i,v_i)+\sum_{\substack{ i\not=j,\\ i,j\in I}}a_ia_j(v_i,v_j)=0,
\end{aligned}
\]
because $(\cdot,\cdot)$ is skew-symmetric and $(v_i,v_i)=0$ for $1\leq i\leq d-1$ by assumption. Thus 
\[
 |M(y)|=(p^{d-1}-p)(p^{d-1}-p^2).
\]
Therefore
\[
 N=\sum_{y\in V\setminus\{0\}} |M(y)|=(p^d-1)(p^{d-1}-p)(p^{d-1}-p^2),
\]
as desired.
\\
\\
(2) In this case $p=2$. Let us write $y=\sum_{i\in I}v_i$, $I\subseteq \{1,2,\ldots,d\}$.

{\bf Case 1:} $1\not\in I$. Then we claim that $y\in y^\perp$. Indeed
\[
\begin{aligned}
 (y,y)&=(\sum_{i\in I} v_i, \sum_{j\in I} v_j)=\sum_{i,j\in I}(v_i,v_j)\\
&=\sum_{i\in I} (v_i,v_i)+\sum_{\substack{ i\not=j,\\ i,j\in I}}(v_i,v_j)=0,
\end{aligned}
\]
because $(\cdot,\cdot)$ is skew-symmetric and $(v_i,v_i)=0$ for $2\leq i\leq d-1$ by assumption. Thus 
\[
 |M(y)|=(2^{d-1}-2)(2^{d-1}-4).
\]
{\bf Case 2:} $1\in I$. Then we claim that $(y,y)=1$, and hence $y\not\in y^\perp$. Indeed, let $I^\prime:=I\setminus\{1\}$, then
\[
\begin{aligned}
 (y,y)&=(v_1+\sum_{i\in I^\prime} v_i, v_1+\sum_{i\in I^\prime} v_j)\\
&=(v_1,v_1)+ (v_1,\sum_{i\in I^\prime} v_i)+(\sum_{i\in I^\prime} v_i,v_1)+(\sum_{i\in I^\prime} v_i,\sum_{i\in I^\prime} v_i)\\
&=(v_1,v_1)=1,
\end{aligned}
\]
because $(\cdot,\cdot)$ is skew-symmetric, and $(\sum_{i\in I^\prime} v_i,\sum_{i\in I^\prime} v_i)=0$ by Case 1. We note also that $y$ is linearly independent from $y^\perp\setminus\{0\}$. Indeed assume that $a y+b v=0$ for some $a,b\in \F_2$, $0\not=v\in y^\perp$. Then using $0=(ay+bv,y)=a(y,y)+b(v,y)=a$,  we see that both $a$ and hence $b$ are 0.  Thus $|M(y)|$ is equal to the number of pairs $(x,z)\in y^{\perp}$ such that $x,z$ are $\F_2$-linearly independent. Therefore  
\[
 |M(y)|=(2^{d-1}-1)(2^{d-1}-2).
\]

We then obtain
\[
\begin{aligned}
 N&=\sum_\text{$y$ in Case 1} |M(y)|+ \sum_\text{$y$ in Case 2} |M(y)|\\
&=(2^{d-1}-1)(2^{d-1}-2)(2^{d-1}-4) +2^{d-1}(2^{d-1}-1)(2^{d-1}-2)\\
&=(2^{d-1}-1)(2^{d-1}-2)(2^{d}-4),
\end{aligned}
\]
as desired.
\end{proof}
\subsection{The main results}
\begin{thm}
\label{thm:counting U4 Demushkin}
  Let $G$ be a Demushkin group of rank $d$ with $d\geq 1$. Let $q=q(G)$ be its $q$-invariant. Then
\[
|{\rm Epi}(G,\U_4(\F_p))|=
\begin{cases}
(p^{d}-1)(p^{d-1}-p)(p^{d-1}-p^2)p^{3d-1} &\text{ if $q\not=2$},\\
(2^{d-1}-1)(2^{d-1}-2)(2^{d}-4)2^{3d-1} &\text{ if $q=2$}.
\end{cases}
\]
\end{thm}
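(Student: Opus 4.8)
The plan is to reduce the computation of $|{\rm Epi}(G,\U_4(\F_p))|$ to the two already-established ingredients: the enumeration of $\mathrm{TMP}(G,\F_p)$ via linear algebra, and the count of the fiber sizes $|Z^1(G,M_\varphi)|$. First I would invoke Proposition~\ref{prop:counting Epi}, whose hypotheses must be checked: since $G$ is an infinite Demushkin group it is finitely generated, hence $H^1(G,\F_p)$ is finite-dimensional and both $\mathrm{TMP}(G,\F_p)$ and each $Z^1(G,M_\varphi)$ are finite (the latter by Lemma~\ref{lem:counting Z1}). For small $d$ (that is, $d=1,2$) one has to handle a degenerate case separately: if $d<3$ then $H^1(G,\F_p)$ has dimension $<3$, so there are no three $\F_p$-linearly independent elements, $\mathrm{TMP}(G,\F_p)=\emptyset$, and the formula correctly predicts $0$ because each of the factors $(p^d-1)(p^{d-1}-p)(p^{d-1}-p^2)$ or $(2^{d-1}-1)(2^{d-1}-2)(2^d-4)$ vanishes for $d\le 2$; I would dispose of this at the outset and assume $d\geq 3$ thereafter.

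With $d\geq 3$, the key point is that for a Demushkin group the triple Massey product is automatically single-valued and vanishes whenever it is defined — this is the vanishing triple Massey product property, which follows because a defined triple Massey product $\langle x,y,z\rangle$ has $x\cup y=y\cup z=0$, and one can combine this with the non-degeneracy of the cup product pairing together with the known structure theory. Actually the cleanest route is: $\mathrm{TMP}(G,\F_p)$ is exactly the set of $\F_p$-linearly independent triples $(x,y,z)$ with $\langle x,y,z\rangle$ defined and containing $0$; since $\dim H^2(G,\F_p)=1$ and $G$ is a Poincaré duality group of dimension $2$, the relation module is generated by a single $r$, so by Proposition~\ref{prop:Demushkin-linear algebra} there is an $\F_p$-basis $v_1,\ldots,v_d$ of $H^1(G,\F_p)$ in which the cup-product form $(\cdot,\cdot)\colon H^1\times H^1\to H^2\xrightarrow{\mathrm{tr}_r}\F_p$ is the standard non-degenerate skew-symmetric form (with $(v_1,v_1)=1$ in the $q=2$ case, and all $(v_i,v_i)=0$ in the $q\ne 2$ case). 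The condition "$\langle x,y,z\rangle$ is defined" is precisely "$x\cup y=0$ and $y\cup z=0$" (Definition~\ref{defn:Massey product}), i.e. $(x,y)=(y,z)=0$; and when defined it contains $0$ because of the vanishing property. Hence $\mathrm{TMP}(G,\F_p)$ is exactly the set counted by Lemma~\ref{lem:LA}, giving
\[
|\mathrm{TMP}(G,\F_p)|=
\begin{cases}
(p^d-1)(p^{d-1}-p)(p^{d-1}-p^2) & q\ne 2,\\
(2^{d-1}-1)(2^{d-1}-2)(2^d-4) & q=2.
\end{cases}
\]

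Next I would pin down the fiber sizes. For $\varphi=(x,y,z)\in\mathrm{TMP}(G,\F_p)$, the module $M_\varphi$ depends on $\varphi$ only through whether $\mathrm{im}\,\varphi\subseteq\{0\}\times\F_p\times\{0\}$, and by Lemma~\ref{lem:counting Z1} we get $|Z^1(G,M_\varphi)|=p^{3d}$ in that case and $p^{3d-1}$ otherwise. But here the linear independence of $x,y,z$ forces $\varphi\colon G\to\F_p^3$ to be surjective (Lemma~\ref{lem:linear independence}), so $\mathrm{im}\,\varphi=\F_p^3\not\subseteq\{0\}\times\F_p\times\{0\}$, and therefore $|Z^1(G,M_\varphi)|=p^{3d-1}$ for \emph{every} $\varphi\in\mathrm{TMP}(G,\F_p)$. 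Plugging this uniform value into Proposition~\ref{prop:counting Epi} turns the sum into $|\mathrm{TMP}(G,\F_p)|\cdot p^{3d-1}$, which is exactly the claimed formula in both cases.

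The main obstacle I anticipate is justifying the identification "$\langle x,y,z\rangle$ defined $\iff$ $(x,y)=(y,z)=0$, and then it contains $0$" cleanly — in particular establishing the vanishing triple Massey product property for Demushkin groups. One wants to avoid circularity and avoid re-proving the structure-theoretic facts; the right tool is the trace-map / Magnus-coefficient description of $\mathrm{tr}_r\langle\chi_i,\chi_j,\chi_k\rangle$ used in Proposition~\ref{prop:restricted ramification} (following \cite[Appendix]{Vo2}), applied to the single Demushkin relator $r$, together with the observation that $r\in S^p[S,S]$ actually lies modulo $S_{(3)}$ in a very controlled position so that all relevant triple-Massey trace coefficients vanish, forcing $\langle x,y,z\rangle=0$ whenever the cup products vanish. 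A secondary technical point is confirming that $R=\langle r\rangle\subseteq S_{(3)}$ so that the single-valuedness of the triple Massey product (as in Proposition~\ref{prop:R small}) applies; this follows from the explicit forms \eqref{eq:D1}--\eqref{eq:D4} of $r$ once one checks each is congruent to a product of triple commutators (equivalently lies in $S_{(3)}$) — for $q\ne 2$ this needs $q=p^{k}$ with $p^k\in S_{(3)}$, i.e. $q\ge p^2$ if $p$ is odd, or one treats the $x_1^p$ term's contribution to the Massey product directly. Once these points are secured, the rest is bookkeeping.
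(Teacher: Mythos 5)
Your overall architecture is the same as the paper's: identify ${\rm TMP}(G,\F_p)$ with the set counted in Lemma~\ref{lem:LA} via Proposition~\ref{prop:Demushkin-linear algebra}, observe that linear independence makes $\varphi$ surjective so that $\im\varphi\not\subseteq\{0\}\times\F_p\times\{0\}$ and Lemma~\ref{lem:counting Z1} gives the uniform fiber size $p^{3d-1}$, and sum via Proposition~\ref{prop:counting Epi} (the paper phrases this as Corollary~\ref{cor:VTM}). The genuine gap is at the point you yourself single out as the main obstacle: your justification of the vanishing triple Massey product property for Demushkin groups does not work. You propose to reduce to the single-valued trace description (as in Propositions~\ref{prop:R small} and \ref{prop:restricted ramification}) by checking that the Demushkin relator satisfies $R=\langle r\rangle\subseteq S_{(3)}$. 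That is false: by \eqref{eq:D1}--\eqref{eq:D4}, $r$ is congruent modulo $S_{(3)}$ to $[x_1,x_2][x_3,x_4]\cdots$ (together with square terms when $p=2$), a nontrivial element of $S_{(2)}/S_{(3)}$. Indeed, $R\subseteq S_{(3)}$ would force ${\rm tr}_r(v_i\cup v_j)=0$ for all $i,j$, i.e.\ the cup-product form would vanish identically, contradicting the non-degeneracy you use in the very same proof. Consequently the triple Massey product on a Demushkin group is \emph{not} single-valued, and the planned Magnus/trace-coefficient computation cannot establish that every defined $\langle x,y,z\rangle$ contains $0$.

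The missing ingredient is exactly what the paper imports: $G$ has the vanishing triple Massey product property by \cite[Theorem 4.2]{MT1}, and citing that result closes the gap, after which your bookkeeping is correct. Alternatively, for the triples you actually need ($x,y,z$ linearly independent, hence nonzero) a short indeterminacy argument suffices and avoids the citation: the set of values of a defined $\langle x,y,z\rangle$ is a coset of $x\cup H^1(G,\F_p)+H^1(G,\F_p)\cup z$ in $H^2(G,\F_p)$, and since the cup product is non-degenerate with $\dim_{\F_p}H^2(G,\F_p)=1$ and $x\neq 0$, already $x\cup H^1(G,\F_p)=H^2(G,\F_p)$, so the product is all of $H^2(G,\F_p)$ and contains $0$. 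A smaller slip: for $d=1$, $q\neq 2$ the displayed factors do not vanish, so your claim that the formula automatically outputs $0$ for $d\le 2$ fails there; that case is vacuous (a rank-one Demushkin group is $\Z/2$, so $q=2$), and should be excluded on those grounds rather than by the formula.
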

\begin{proof}
First note that $G$ has the vanishing triple Massey product property (see \cite[Theorem 4.2]{MT1}). The result then follows from Corollary~\ref{cor:VTM}, Proposition~\ref{prop:Demushkin-linear algebra}, Lemma~\ref{lem:counting Z1} and Lemma~\ref{lem:LA}.
\end{proof}
\begin{thm}
\label{thm: local U4-extensions}
 Let $K$ be a local field which is a finite extension of degree $n$ of $\Q_p$. Assume that $K$ contains a primitive $p$-th root of unity. Let $q$ be the highest power of $p$ such that $K$ contains a
primitive $q$-th root of unity.
Then the number of Galois $\U_4(\F_p)$-extensions of $K$ is 
\[
 \begin{cases}
\displaystyle
\frac{(p^{n+2}-1)(p^n-1)(p^{n-1}-1)p^{3n}}{2(p-1)^3} & \text{ if $p\not=2$},\\
\displaystyle
\frac{(2^{n+2}-1)(2^n-1)(2^{n-1}-1)2^{3n+1}}{3} & \text{ if $p=2$ and $q\not=2$},\\
\displaystyle
  \frac{(2^{n+1}-1)(2^n-1)^2 2^{3n+1}}{3} &\text { if $q=2$}.
 \end{cases}
\]
\end{thm}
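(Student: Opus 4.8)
The plan is to realize $G_K(p)$ as a Demushkin group with explicit invariants, feed it into Theorem~\ref{thm:counting U4 Demushkin}, and then convert the resulting count of epimorphisms into a count of extensions by dividing by $|{\rm Aut}(\U_4(\F_p))|$.

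First I would record the two standard facts about $G:=G_K(p)$. Since $K/\Q_p$ has degree $n$ and contains a primitive $p$-th root of unity, $G$ is an infinite Demushkin group whose rank is $d=\dim_{\F_p}H^1(G,\F_p)=\dim_{\F_p}K^\times/(K^\times)^p=n+2$ (as in the Remark following Lemma~\ref{lem:EP}), and whose $q$-invariant equals $q$, the largest power of $p$ for which $K$ contains a primitive $q$-th root of unity; this is part of the Demushkin--Serre--Labute classification recalled in \eqref{eq:D1}--\eqref{eq:D4} (see \cite{La}, \cite{Se}, \cite{NSW}). Note that for $p$ odd one has $(p-1)\mid n$, so $d$ is even and automatically $q\neq 2$; for $p=2$ either $q=2$ (handled by the ``$q=2$'' row of Theorem~\ref{thm:counting U4 Demushkin}, regardless of the parity of $d$ and of which of \eqref{eq:D2}--\eqref{eq:D4} holds) or $q\geq 4$ (handled by the ``$q\neq 2$'' row).

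Next, since $\U_4(\F_p)$ is a finite $p$-group, every continuous surjection $G_K\twoheadrightarrow\U_4(\F_p)$ factors through $G=G_K(p)$, so Lemma~\ref{lem:G mod Zassenhaus subgroup}(a) gives an ${\rm Aut}(\U_4(\F_p))$-equivariant bijection ${\rm Epi}(G_K,\U_4(\F_p))\simeq{\rm Epi}(G,\U_4(\F_p))$; and two surjections cut out the same subfield of a fixed algebraic closure, hence the same $\U_4(\F_p)$-extension, precisely when they differ by post-composition with an automorphism of $\U_4(\F_p)$. Thus the number of Galois $\U_4(\F_p)$-extensions of $K$ equals $|{\rm Epi}(G,\U_4(\F_p))|/|{\rm Aut}(\U_4(\F_p))|$. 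I would then substitute $d=n+2$ into the two formulas of Theorem~\ref{thm:counting U4 Demushkin}; factoring out the evident powers of $p$ yields $|{\rm Epi}(G,\U_4(\F_p))|=(p^{n+2}-1)(p^n-1)(p^{n-1}-1)\,p^{3n+8}$ when $q\neq 2$ and $(2^{n+1}-1)(2^n-1)^2\,2^{3n+8}$ when $q=2$. Dividing by $|{\rm Aut}(\U_4(\F_p))|$ and separating the three regimes $p\neq 2$, $p=2$ with $q\neq 2$, and $q=2$, produces the three displayed expressions; the manipulation is routine once the order of the automorphism group is in hand, and in the $p=2$ cases it uses $|{\rm Aut}(\U_4(\F_2))|=3\cdot 2^7$ (e.g.\ \cite{M}).

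The main obstacle --- the only ingredient not already contained in Theorem~\ref{thm:counting U4 Demushkin} --- is the value of $|{\rm Aut}(\U_4(\F_p))|$ for odd $p$, which for compatibility with the stated formula must equal $2\,p^{8}(p-1)^{3}$. I would establish this by decomposing ${\rm Aut}(\U_4(\F_p))$ into its inner automorphisms (of order $p^{5}$), the diagonal automorphisms obtained from conjugation by the split maximal torus of ${\rm GL}_4(\F_p)$ (contributing $(p-1)^{3}$), the central and remaining ``extremal'' automorphisms supported along the first row and last column, and the order-$2$ graph automorphism induced by the symmetry of the $A_3$ Dynkin diagram, and then accounting for the overlaps among these subgroups; alternatively, one cites an existing determination of ${\rm Aut}(U_4(\F_q))$ from the literature. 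A lesser point that still needs care is the identification of the $q$-invariant of $G_K(p)$ with the number of $p$-power roots of unity in $K$ in the exceptional Demushkin cases \eqref{eq:D2}--\eqref{eq:D4}, where one appeals to the precise normal forms in Labute's classification.
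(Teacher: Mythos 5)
Your proposal is correct and follows essentially the same route as the paper: identify $G_K(p)$ as a Demushkin group of rank $n+2$ with $q$-invariant $q$, apply Theorem~\ref{thm:counting U4 Demushkin}, and divide by $|{\rm Aut}(\U_4(\F_p))|$. The only point you treat as an obstacle, the value $|{\rm Aut}(\U_4(\F_p))|=2(p-1)^3p^8$ for odd $p$ (and $3\cdot 2^7$ for $p=2$), the paper simply cites from \cite{Pa} and \cite{M} rather than deriving.
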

\begin{proof} Let $G$ be the Galois group of a maximal $p$-extension of $K$. Then it is well-known that $G$ is a Demushkin group of rank $d=n+2$ and $q(G)=q$ (\cite{La}).
Note also that the number of Galois $\U_4(\F_p)$-extension of $K$ is 
$\displaystyle
\frac{|{\rm Epi}(G,\U_4(\F_p))|}{|{\rm Aut}(\U_4(\F_p))|}.
$
The result then follows from Theorem~\ref{thm:counting U4 Demushkin}  and from the fact that 
\[
|{\rm Aut}(\U_4(\F_p))|=
\begin{cases} 
3\cdot 2^7 & \text{ if $p=2$},\\
2(p-1)^3p^8 &\text{ if $p>2$}.
\end{cases}
\]
(This latter fact follows from \cite{M} for $p=2$, and from \cite{Pa} for $p>2$.)
\end{proof}
\begin{rmk} 
\label{rmk:CP}
In this remark we observe that our methods work efficiently in the case $n =3$, and in fact we can recover some results originally obtained in \cite[Theorem 11]{MNg} and reproved and extended in \cite[Section 2]{Ya}.

Let the notation be as in Theorem~\ref{thm: local U4-extensions}. By considering the following exact sequence
\[
1\to \F_p\to \U_3(\F_p)\to \F_p\times \F_p\to 1,
\]
instead of the exact sequence (\ref{eq:U4 exact sequence}), we can derive  the number $\nu(K,\U_3(\F_p))$ of Galois $\U_3(\F_p)$-extensions of $K$ as follows.  Let $G:=G_K(p)$. Then $G$ is a Demushkin group of rank $n+2$.
Let ${\rm CP}(G,\F_p)$ is the set of $(x,y)\in H^1(G,\F_p)\times H^1(G,\F_p)$ such that $x\cup y=0$ and that $x,y$ are $\F_p$-linearly independent in $H^1(G,\F_p)$.  For each $\varphi \in {\rm CP}(G,\F_p)$, let us still denote, by abuse of notation, $\varphi\colon G\to \F_p^2$ the induced surjective homomorphism. An analogous result to Proposition~\ref{prop:counting Epi} is that
\[
|{\rm Epi}(G,\U_3(\F_p))|=\sum_{\varphi\in {\rm CP}(G,\F_p)}|Z^1(G,\F_p)| =|{\rm CP}(G,\F_p)|\cdot p^{n+2}.
\]
From Proposition~\ref{prop:Demushkin-linear algebra} and the proof of Lemma~\ref{lem:LA} we see that
\[
|{\rm CP}(G,\F_p)| =
\begin{cases}
(p^{n+2}-1)(p^{n+1}-p) & \text{ if $p>2$},\\
(2^{n+2}-1)(2^{n+1}-2) & \text{ if $p=2$ and $q>2$},\\
(2^{n+1}-1)(2^{n+1}-2)+2^{n+1}(2^{n+1}-1) &\text { if $p=2$ and $q=2$}.
 \end{cases}
\]
Note also that 
\[
|{\rm Aut}(\U_3(\F_p))|= 
\begin{cases}
p^3(p^2-1)(p-1) &\text{ if $p>2$},\\
8 &\text{if $p=2$}. 
\end{cases}
\]
Therefore
\[
\nu(K,\U_3(\F_p))=\frac{|{\rm Epi}(G,\U_3(\F_p))|}{|{\rm Aut}(\U_3(\F_p))|}=
 \begin{cases}
\displaystyle
\frac{p^n(p^{n+2}-1)(p^n-1)}{(p^2-1)(p-1)} & \text{ if $p>2$},\\
2^n(2^n-1)(2^{n+2}-1) & \text{ if $p=2$ and $q>2$},\\
\displaystyle
2^n(2^{n+1}-1)^2 &\text { if $p=2$ and $q=2$}.
 \end{cases}
 \qedhere
\]
\end{rmk}
\begin{rmk} If $K$ is a finite extension of $\Q_2$ and $p=2$, then the work \cite{AMT} provides another method to count the number of Galois $\U_4(\F_2)$-extensions over $K$. In particular \cite{AMT} provides an explicit list of all 16 such extensions over $\Q_2$. 
\end{rmk}
\section{Epimorphisms from Demushkin groups to $\U_n(\F_p)$} 
In this section we shall provide a necessary and sufficient condition to ensure that ${\rm Epi}(G,\U_n(\F_p))$ is not empty, where $G$ is a Demushkin group, see Theorem~\ref{thm: Demushkin-Un}. In the course of  showing  this result, we establish a result on weakening the defining condition for a Massey product for a Demushkin group, Proposition~\ref{prop: Demushkin n-Massey}, which we hope  is interesting in its own.

For some convenience, we introduce the following definition.
Let $n\geq 1$ be an integer. Let $a_1,\ldots,a_n$ be elements in $H^1(G,\F_p)$.
A collection $\sM=\{a_{ij}\mid 1\leq i<j\leq n+1\}$ of elements $a_{ij}$ of $\sC^1(G,\F_p)$ is called a {\it complete defining system} for the $n$-tuple $(a_1,\ldots,a_n)$ if the following conditions are fulfilled:
\begin{enumerate}
\item $a_{i,i+1}=a_i$ for all $i=1,2\ldots,n$.
\item $\delta a_{ij}= \sum_{l=i+1}^{j-1} a_{il}\cup a_{lj}$ for  all $i+1<j$.
\end{enumerate}
Note that for $n=1$, $\sM=\{a_{12}:=a_1\}$ is a complete defining system for $(a_1)$. For $n=2$, $(a_1,a_2)$ has a complete defining system if and only if $a_1\cup a_2=0$. For $n\geq 3$, $(a_1,\ldots,a_n)$ has a complete defining system if and only if the $n$-fold Massey product $\langle a_1,\ldots,a_n\rangle$ is defined and contains 0.

\begin{prop}
\label{prop: Demushkin n-Massey}
Let $G$ be a Demushkin pro-$p$-group and let $n$ be a natural number $\geq 3$. Let $\chi_1,\ldots, \chi_n$ be $n$  elements in $H^1(G,\F_p)$. Then the $n$-fold Massey product $\langle \chi_1,\ldots,\chi_n\rangle $ is defined if and only $\chi_1\cup\chi_2=\chi_2\cup \chi_3=\cdots=\chi_{n-1}\cup\chi_n=0$.
\end{prop}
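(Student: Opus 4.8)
The plan is to induct on $n$, constructing a complete defining system one "diagonal" at a time. The forward implication is trivial: if $\langle\chi_1,\ldots,\chi_n\rangle$ is defined then a defining system exists, and condition (2) applied to consecutive indices forces $\delta a_{i,i+2}=a_{i,i+1}\cup a_{i+1,i+2}=\chi_i\cup\chi_{i+1}$, so each $\chi_i\cup\chi_{i+1}$ is a coboundary, i.e. zero in $H^2(G,\F_p)$. The substance is the converse. So assume $\chi_i\cup\chi_{i+1}=0$ for $i=1,\ldots,n-1$ and build the $a_{ij}$ for $j-i=1,2,\ldots,n$ in turn. For $j-i=1$ set $a_{i,i+1}=\chi_i$; for $j-i=2$ pick any cochain $a_{i,i+2}$ with $\delta a_{i,i+2}=\chi_i\cup\chi_{i+1}$, which exists by hypothesis. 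Now suppose all $a_{ij}$ with $j-i<\ell$ have been chosen satisfying (2); fix a pair $(i,j)$ with $j-i=\ell\ge 3$ and consider the $2$-cochain $c_{ij}:=\sum_{l=i+1}^{j-1}a_{il}\cup a_{lj}$. A standard computation (the "Massey obstruction is closed" lemma, as in \cite[Appendix]{Vo2} or \cite{Dwy}) shows $\delta c_{ij}=0$ using the inductive relations (2) on the shorter diagonals together with the Leibniz rule for $\delta$ and associativity of $\cup$; hence $c_{ij}$ defines a class $[c_{ij}]\in H^2(G,\F_p)$, and we can choose $a_{ij}$ with $\delta a_{ij}=c_{ij}$ precisely when $[c_{ij}]=0$.

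The key point — and the only place the Demushkin hypothesis enters — is that this obstruction class $[c_{ij}]$ always vanishes, at every stage $\ell<n$, possibly after correcting the earlier choices. Here is where I would use that $\dim_{\F_p}H^2(G,\F_p)=1$ together with the nondegeneracy of the cup product. Concretely: the class $[c_{ij}]$ is, up to sign and the choices made, the value of the $\ell$-fold Massey product $\langle\chi_i,\ldots,\chi_j\rangle$ relative to the partial system built so far; since $\ell\le n-1<n$, I want to show this $\ell$-fold product is defined and contains $0$, which then lets me adjust $a_{ij}$ to kill the obstruction and proceed. The cleanest route is an inner induction on $\ell$: by the inductive statement of the Proposition itself for the integer $\ell$ (which is $\ge 3$ and $<n$, or handled directly when $\ell=2$), the hypotheses $\chi_i\cup\chi_{i+1}=\cdots=\chi_{j-1}\cup\chi_j=0$ already guarantee that $\langle\chi_i,\ldots,\chi_j\rangle$ is \emph{defined}; what remains is that \emph{the particular defining system we have} can be completed, i.e. that the obstruction in $H^2$ is zero. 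For this I would invoke the following mechanism: given a defined $\ell$-fold Massey product over a Demushkin group with $\dim H^2=1$, the indeterminacy of its value is all of $H^2(G,\F_p)$ — indeed, modifying $a_{i+1,j}$ by a suitable multiple of a cochain $u$ with $\delta u=0$ changes the value by $\chi_i\cup u=\chi_i\cup[u]$, and since $\chi_i\ne 0$ (it is one of a chain; if some $\chi_i=0$ the statement is easy separately) and the cup product pairing is nondegenerate, $\chi_i\cup(-)$ is surjective onto $H^2(G,\F_p)\cong\F_p$. Hence whatever $[c_{ij}]$ is, it can be annihilated by a permissible modification of the already-constructed system. This propagates: fix the lower diagonals, adjust within the $\ell$-th diagonal to make every $c_{ij}$ a coboundary, choose the $a_{ij}$, and pass to $\ell+1$.

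I expect the main obstacle to be bookkeeping: making precise that a modification of a single entry $a_{i+1,j}$ (or more generally of entries on the $(\ell-1)$-st diagonal) used to correct the obstruction on one pair $(i,j)$ does not destroy the relations (2) already arranged for other pairs on the $\ell$-th diagonal, so that all obstructions on a given diagonal can be cleared simultaneously. The correct statement is that the entries on the $\ell$-th diagonal are "independent" in the sense that $a_{i,j}$ only appears in the defining-relation for itself and, on higher diagonals, always cupped on the left with $\chi$'s or on the right — so one should organize the correction as: first fix all diagonals $<\ell-1$; then on diagonal $\ell-1$ allow corrections by cocycles; observe that the diagonal-$\ell$ obstruction $[c_{ij}]$ depends affinely on these correction cocycles with the coefficient of the correction to $a_{i+1,j}$ being $\chi_i\cup(-)$, which is onto $H^2$; solve. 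A subtlety worth flagging explicitly in the write-up is that the nondegeneracy of the cup product pairing on $H^1(G,\F_p)$ is what forces $\chi_i\cup(-)\colon H^1(G,\F_p)\to H^2(G,\F_p)$ to be surjective for $\chi_i\ne 0$; the finite-rank case is Proposition~\ref{prop:Demushkin-linear algebra}, and in the infinite-rank case one uses condition (3) in the definition of Demushkin group directly. The degenerate cases (some $\chi_i=0$, or the $\chi_i$ not all distinct as classes) should be dispatched at the start, since then several cup products vanish for trivial reasons and a defining system is written down by hand.
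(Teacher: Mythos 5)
Your proposal is correct in its essentials and uses the same key ingredient as the paper --- for $\chi\neq 0$ the map $\chi\cup(-)\colon H^1(G,\F_p)\to H^2(G,\F_p)\cong\F_p$ is surjective by nondegeneracy of the cup product, so obstructions can be killed by adding cocycles to already-chosen entries --- but it organizes the induction differently. The paper inducts on $n$ by peeling off $\chi_1$: it invokes the vanishing property of Demushkin groups (\cite[Theorem 4.2]{MT1}) to get, from the inductive hypothesis, a \emph{complete} defining system for $(\chi_2,\ldots,\chi_n)$ in one stroke, and then only has to construct the first-row entries $a_{13},\ldots,a_{1n}$ one at a time, adjusting each newly chosen $a_{1j}$ by a cocycle $c_{1j}$ (using surjectivity of cupping with $\chi_j\neq 0$) so that the \emph{next} obstruction becomes exact; since each correction affects only the single obstruction not yet treated, the interference problem you flag never arises. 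Your diagonal-by-diagonal scheme dispenses with the external vanishing theorem, but in exchange you must solve the coupled affine system on each diagonal: a correction $u$ to $a_{i+1,j}$ on diagonal $\ell-1$ enters the obstruction $[c_{i,j}]$ through $\chi_i\cup[u]$ and also $[c_{i+1,j+1}]$ through $[u]\cup\chi_j$, so the corrections are not independent; a left-to-right greedy choice (fix the obstructions in order of increasing $i$, each new correction only disturbing an obstruction not yet handled) resolves this, and you should say so explicitly rather than just ``solve'' --- it is in effect the paper's sequential correction transplanted to each diagonal. One further thin spot: the degenerate case where some $\chi_k=0$ is not merely ``written down by hand at the start.'' To glue across a zero entry you need \emph{complete} defining systems (defined and containing $0$, including the top corner condition) for the two sub-tuples $(\chi_1,\ldots,\chi_{k-1})$ and $(\chi_{k+1},\ldots,\chi_n)$, not just definedness; the paper therefore first proves the stronger statement for tuples of nonzero classes and then inducts on the number of zero entries, splitting at a zero and setting all crossing entries $a_{ij}=0$. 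Your construction can deliver the needed complete systems (surjectivity clears even the top obstruction), but this strengthening and the second induction should be made explicit.
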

\begin{proof} 

From the condition (2) for a defining system of a Massey product in Definition~\ref{defn:Massey product} applying to pairs $(i,j)=(1,3),(2,4),\cdots,(n-1,n+1)$, we see that our condition is necessary. 
It is then enough to prove the ``only if'' statement. We first assume that all $\chi_1,\ldots,\chi_n$ are non-zero. We shall proceed by induction on $n$. For $n=3$ the sufficiency follows as there are no other conditions for a defining system. 
Assume now that $n>3$. By induction hypothesis the $n-1$-fold Massey product $\langle \chi_2,\ldots,\chi_n\rangle$ is defined, and it contains 0 by \cite[Theorem 4.2]{MT1}. Therefore there exists a  defining system $M=\{a_{ij}\in C^1(G,\F_p),2\leq i<j\leq n+1\}$ for $( \chi_2,\ldots,\chi_n)$.
Since $\chi_1\cup \chi_2=0$, there exists $b_{13}\in C^1(G,\F_p)$ such that $\delta b_{13}= a_{12}\cup a_{23}$.  Since $\chi_3\not=0$, the map $\chi_3\cup (-)\colon H^1(G,\F_p)\to H^2(G,\F_p)\simeq \F_p$ is nonzero and hence surjective. Thus there exists $c_{13}\in Z^1(G,\F_p)$ such that 
\[c_{13}\cup a_{34}=-(a_{12}\cup a_{24}+b_{13}\cup a_{34}) \bmod  B^2(G,\F_p).\]
 Setting $a_{13}:=b_{13}+c_{13}$, then 
\[
\delta a_{13}=a_{12}\cup a_{23},
\]
and $a_{12}\cup a_{24}+a_{13}\cup a_{34}=\delta b_{14}$ for some $b_{14}\in C^1(G,\F_p)$. 

Since $\chi_4\not=0$, the map $\chi_4\cup (-)\colon H^1(G,\F_p)\to H^2(G,\F_p)\simeq \F_p$ is surjective. Thus there exists $c_{14}\in Z^1(G,\F_p)$ such that 
\[ c_{14}\cup a_{45}=-(a_{12}\cup a_{25}+a_{13}\cup a_{35}+b_{14}\cup a_{45}) \bmod  B^2(G,\F_p).\] 
Setting $a_{14}:=b_{14}+c_{14}$, then 
\[
\delta a_{14}=\delta b_{14}=a_{12}\cup a_{24}+a_{13}\cup a_{34},
\]
and $a_{12}\cup a_{25}+a_{13}\cup a_{35}+a_{14}\cup a_{45}=\delta b_{15}$ for some $b_{15}\in C^1(G,\F_p)$. By repeating this argument we can construct $a_{13},a_{14},\cdots,a_{1n}\in C^1(G,\F_p)$ such that
\[
\delta a_{1j}= \sum_{l=2}^{j-1} a_{1l}\cup a_{lj}, \quad \forall 3\leq j\leq n.
\]
Then $M =\{a_{ij}, 1\leq i<j\leq n+1, (i,j)\not=(1,n+1)\}$ is a defining system for the Massey product $\langle \chi_1,\ldots,\chi_n\rangle$, as desired.

Combining with \cite[Theorem 4.2]{MT1}, we see that we have shown a stronger statement that for every $n\geq 3$ if $\chi_i$'s are all non-zero and if $\chi_1\cup\chi_2=\cdots=\chi_{n-1}\cup\chi_n=0$ then $(\chi_1,\ldots,\chi_n)$ has a complete defining system. As discussed right before stating this theorem, this stronger statement remains to hold true for $n=1$ and $2$ also.

We now consider any $n\geq 1$ elements $\chi_1,\ldots,\chi_n$ (not necessarily all non-zero) such that $\chi_1\cup\chi_2=\cdots=\chi_{n-1}\cup\chi_n=0$, we shall show that $(\chi_1,\ldots,\chi_n)$ has a complete defining system. We prove this statement by induction on the number $r$ of zero elements among $\chi_1,\ldots,\chi_n$. If $r=0$, then that means that all $\chi_1,\ldots,\chi_n$ are all non-zero, and we are done by the previous discussion. Assume now that $r\geq 1$, and assume that $\chi_k=0$ for some $1\leq k\leq n$. By induction hypothesis $(\chi_1,\ldots,\chi_{k-1})$ has a complete defining system $\{a_{ij}\in C^1(G,\F_p),1\leq i<j\leq k\}$, and $(\chi_{k+1},\ldots,\chi_n)$ has a complete defining system $\{a_{ij}\in C^1(G,\F_p),k+1\leq i<j\leq n+1\}$. For $1\leq i\leq k$ and $k+1\leq j\leq n+1$, we set $a_{ij}:=0$. Then it is straightforward to check that $\{a_{ij}\in C^1(G,\F_p),1\leq i<j\leq n+1\}$ is a complete defining system for $(\chi_1,\ldots,\chi_n)$.
\end{proof}

In the case when $G$ is a free pro-$p$-group, we have $H^2(G,\F_p)=0$. It is immediate that for all $n\geq 2$ and for all $\chi_1,\ldots,\chi_n\in H^1(G,\F_p)$, the $n$-fold Massey product $\langle \chi_1,\ldots,\chi_n\rangle$ is defined and equal to 0.
In Remark~\ref{rmk:counterex} below we provide two simple examples of  groups $G$ which shows that Proposition~\ref{prop: Demushkin n-Massey} is not true for all pro-$p$-groups. However these groups $G$ in the example can not occur as the Galois group $G_F(p)$ for some field $F$ containing a primitive $p$-th root of unity. This follows from \cite[Corollary 9.2]{CEM} for the first example, and from \cite{Be} for the second example. This leads us to an interesting question.
\begin{question}
\label{question:n-Massey}
Suppose that $G=G_F(p)$ for some field $F$ containing a primitive $p$-th root of unity. Let $n\geq 3$ be an integer, and let $\chi_1,\ldots,\chi_n$ be elements in $H^1(G,\F_p)$. Is it then true that the $n$-fold Massey product 
$\langle \chi_1,\ldots,\chi_n\rangle$ is defined  whenever
\[
0=\chi_1\cup\chi_2=\chi_2\cup\chi_3=\cdots=\chi_{n-1}\cup\chi_n?
\]
\end{question}
\begin{rmk} Let $p$ be an odd prime. Assume that $G=G_F(p)$ for some field $F$ containing a primitive $p$-th root of unity. Let $\chi$ be any element in $H^1(G,\F_p)$. Then $\chi \cup \chi=0$. In \cite[Section 8]{MTE}, for any integer $n\geq 3$, we show that the $n$-fold Massey product $\langle \chi,\ldots,\chi\rangle$ is defined.
\end{rmk}
\begin{rmks}
\label{rmk:counterex}
(1) Let $S$ be a free pro-$p$-group on 4 generators $x_1,x_2,x_3,x_4$ and let $r=[[x_2,x_3],x_1]$. Let $G=S/\langle r\rangle$. Let $v_1,v_2,v_3,v_4$ be the dual basis to $x_1,x_2,x_3,x_4$ of $H^1(S,\F_p)=H^1(G,\F_p)$. Then by \cite[Proposition 3.9.13]{NSW},
\[
v_1\cup v_2 =v_2\cup v_3=v_3\cup v_4=0.
\]
However the 4-fold Massey product $\langle v_1,v_2,v_3,v_4\rangle$ is not defined since the triple Massey product $\langle v_1,v_2,v_3\rangle$ does not contain 0 (see for example \cite[Proposition 7.7]{MT1}).

(2) We can also provide an easier example if we do not require the $\chi_i$'s in the statement of Proposition~\ref{prop: Demushkin n-Massey} are distinct. Let $p$ be an odd prime and let $G$ be the cyclic group $C_p$ of order $p$. Let $\chi\colon G\to\F_p$ be the identity homomorphism. Then $\chi\cup\chi=0$. However the $p+1$-fold Massey product $\langle \chi,\ldots,\chi\rangle$ is not defined. This is because the $p$-fold Massey product is defined but does not contain 0 (see \cite[Example 4.6]{MT1}).
\end{rmks}
In order to investigate Question~\ref{question:n-Massey} in a systematic way, we formulate the following definition.
\begin{defn} Let $G$ be a profinite group and $p$  a prime number. We say that $G$ has the {\it cup-defining $n$-fold Massey product} property (with respect to $\F_p$) if for every $\chi_1,\ldots,\chi_n\in H^1(G,\F_p)$ with 
\[0=\chi_1\cup\chi_2=\chi_2\cup\chi_3=\cdots=\chi_{n-1}\cup\chi_n,\]
the $n$-fold Massey product $\langle \chi_1,\ldots,\chi_n\rangle$ is defined.
\end{defn}
\begin{rmk}
Let $G$ be a profinite group. Clearly, $G$ always has the cup-defining 3-fold Massey product property.  Let $n\geq 4$ be an integer.  Then if $G$ has the cup-defining $n$-fold Massey product property, then $G$ has the vanishing $(n-1)$-fold Massey product property. Indeed, let $\chi_1,\ldots,\chi_{n-1}$ be $n-1$ elements in $H^1(G,\F_p)$ such that the $(n-1)$-fold Massey product $\langle\chi_1,\ldots,\chi_{n-1}\rangle$ is defined. In particular, we have 
\[\chi_1\cup\chi_2=\cdots=\chi_{n-2}\cup\chi_{n-1}=0.\] 
Since $G$ has the cup-defining $n$-fold Massey product property, the $n$-fold Massey product $\langle \chi_1,\ldots,\chi_{n-2},\chi_{n-1},\chi_{n-2}\rangle$ is defined.  This in turn implies that the $(n-1)$-fold Massey product $\langle \chi_1,\ldots,\chi_{n-2},\chi_{n-1}\rangle$ contains 0.
\end{rmk}
In this current subsection we shall use the following convention on notation. Let $G_1$, $G_2$ and $H$ be three pro-$p$-groups. For any homomorphism $\rho \colon G_1*G_2\to H$,  we denote by $^1\rho$ (respectively, $^2\rho$) the composition of the natural homomorphism $G_1\to G_1*G_2$ (respectively, $G_2\to G_1*G_2$ ) with $\rho$. 
The following lemma can be proved by using some basic properties of cup products and free products. (See \cite[Chapter I, \S 4-\S 5 and Theorem 4.1.14]{NSW}, also compare with \cite[Section 2]{MSw}.) However we prefer a proof using Massey-like considerations which nicely fit into the theme of our paper.
\begin{lem} 
\label{lem:cup product}
 Let $u$ and $v$ be elements in $H^1(G_1*G_2,\F_p)={\rm Hom}(G_1*G_2,\F_p)$. Then  $u\cup v=0$ in $H^2(G_1*G_2,\F_p)$ if and only if $\,^1u\cup \,^1v=0$ in $H^2(G_1,\F_p)$ and $\,^2u\cup \,^2v=0$ in $H^2(G_2,\F_p)$.
\end{lem}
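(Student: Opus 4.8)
The plan is to use the interpretation of the vanishing of a cup product $u \cup v$ in terms of the (single) triple Massey product $\langle u, v, v\rangle$ — or more simply, in terms of the existence of a homomorphism $G_1 * G_2 \to \U_3(\F_p)$ with prescribed $(1,2)$- and $(2,3)$-entries — together with the universal property of the free product of pro-$p$-groups. The key observation is that $u \cup v = 0$ in $H^2(G_1 * G_2, \F_p)$ if and only if the pair $(u,v)$ admits a complete defining system, i.e., there is $a_{13} \in C^1(G_1 * G_2, \F_p)$ with $\delta a_{13} = u \cup v$; and by Remark~\ref{rmk:Dwyer} (applied with $n=2$, using the map $(\bar\rho)_{ij} = -a_{ij}$) this is equivalent to the existence of a continuous homomorphism $\rho \colon G_1 * G_2 \to \U_3(\F_p)$ with $\rho_{12} = -u$ and $\rho_{23} = -v$. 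So the statement to prove becomes: such a $\rho$ exists on $G_1 * G_2$ if and only if such homomorphisms $^1\rho$ and $^2\rho$ exist on $G_1$ and $G_2$ respectively (with $(^i\rho)_{12} = -\,^iu$, $(^i\rho)_{23} = -\,^iv$).

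The forward direction is immediate: given $\rho \colon G_1 * G_2 \to \U_3(\F_p)$, composing with the canonical inclusions $G_i \hookrightarrow G_1 * G_2$ yields homomorphisms $^i\rho$ with the required entries, hence $^iu \cup \,^iv = 0$ in $H^2(G_i, \F_p)$. For the reverse direction I would use the universal property of the free product in the category of pro-$p$-groups: given continuous homomorphisms $\rho_i \colon G_i \to \U_3(\F_p)$ with $(\rho_i)_{12} = -\,^iu$ and $(\rho_i)_{23} = -\,^iv$, there is a unique continuous homomorphism $\rho \colon G_1 * G_2 \to \U_3(\F_p)$ restricting to $\rho_i$ on each factor. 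One then checks that $\rho_{12}$ and $\rho_{23}$ are the homomorphisms $G_1 * G_2 \to \F_p$ restricting to $-\,^iu$ and $-\,^iv$ on the factors; since $u$ (resp. $v$) is also such a homomorphism and homomorphisms out of a free product are determined by their restrictions, we get $\rho_{12} = -u$ and $\rho_{23} = -v$. Translating back via Remark~\ref{rmk:Dwyer}, the existence of $\rho$ forces $u \cup v = 0$ in $H^2(G_1 * G_2, \F_p)$.

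The one point requiring care — and the main (minor) obstacle — is the very last translation step: from a homomorphism $G_1 * G_2 \to \U_3(\F_p)$ with the prescribed off-diagonal homomorphisms back to the cochain-level identity $\delta a_{13} = u \cup v$ and hence to $u\cup v = 0$ in cohomology. This is exactly the content of Remark~\ref{rmk:Dwyer} in the case $n = 2$ (a homomorphism into $\bar\U_3(\F_p) = \F_p \times \F_p$ lifts to $\U_3(\F_p)$ iff the associated $2$-fold Massey product, i.e. the cup product, vanishes), so no new work is needed; I would simply cite it, being slightly careful about the sign conventions $(\bar\rho)_{ij} = -a_{ij}$. Everything else is formal: the forward implication uses only functoriality of cohomology under the inclusions $G_i \hookrightarrow G_1 * G_2$, and the backward implication uses only the universal property of $G_1 * G_2$. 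I expect the proof to be three or four short sentences once these ingredients are assembled.
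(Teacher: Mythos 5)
Your proposal is correct and follows essentially the same route as the paper's own proof: both translate the vanishing of the cup product into the existence of a homomorphism into $\U_3(\F_p)$ with prescribed near-diagonal entries (the $n=2$ case of the Dwyer-type correspondence), prove the forward direction by restricting to the factors, and prove the converse by the universal property of the free pro-$p$ product, checking that the glued representation has the right $(1,2)$- and $(2,3)$-entries. The only (immaterial) difference is your sign convention $\rho_{i,i+1}=-a_i$ versus the paper's choice without the minus signs.
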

\begin{proof} Assume that $u\cup v=0$, there exists a homomorphism $\rho\colon G_1*G_2\to \U_3(\F_p)$ such that $\rho_{12}=u$ and $\rho_{23}=v$. Then we have
\[
\begin{aligned}
(\,^1\rho)_{12}=\,^1(\rho_{12})= \,^1u,\\
(\,^1\rho)_{23}=\,^1(\rho_{23})=\,^1v.
\end{aligned}
\]
This means that $\,^1\rho\colon G_1\to \U_3(\F_p)$ is a lift of $(\,^1u,\,^1v)\colon G_1\to \F_p\times\F_p$. Therefore $\,^1u\cup \,^1v=0$ in $H^2(G_1,\F_p)$.

Similarly, $\,^2u\cup \,^2v=0$ in $H^2(G_2,\F_p)$.

Conversely, assume that $\,^1u\cup \,^1v=0$ in $H^2(G_1,\F_p)$ and $\,^2u\cup \,^2v=0$ in $H^2(G_2,\F_p)$. Then there exist a homomorphism $\rho_1\colon G_1\to \U_3(\F_p)$ and a homomorphism $\rho_2\colon G_2\to \U_3(\F_p)$ such that 
\[
\begin{aligned}
(\rho_1)_{12}=\,^1u,\quad (\rho_1)_{23}=\,^1v,\\
(\rho_2)_{12}=\,^2u, \quad (\rho_1)_{23}=\,^2v.
\end{aligned}
\]

By the universal property of free products, we see that there exists a unique homomorphism $\rho\colon G_1*G_2\to \U_3(\F_p)$ such that $\,^1\rho=\rho_1$ and $\,^2\rho=\rho_2$. Then we have
\[
\begin{aligned}
\,^1(\rho_{12})=(\,^1\rho)_{12}=(\rho_1)_{12}=\,^1u, \\
\,^2(\rho_{12})=(\,^2\rho)_{12}=(\rho_2)_{12}=\,^2u,\\
\,^1(\rho_{23})=(\,^1\rho)_{23}=(\rho_1)_{23}=\,^1v,\\
 \,^2(\rho_{23})=(\,^2\rho)_{23}=(\rho_2)_{23}=\,^2v.
\end{aligned}
\]
This implies that $\rho_{12}=u$ and $\rho_{23}=v$. Therefore $u\cup v=0$ in $H^1(G_1*G_2,\F_p)$.
\end{proof}
\begin{prop}
\label{prop:free product}
Let $G_1$ and $G_2$ be two pro-$p$-groups. Then the free pro-$p$ product $G_1*G_2$ has the cup-defining $n$-fold Massey product property if and only if both $G_1$ and $G_2$ do.
\end{prop}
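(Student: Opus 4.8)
The plan is to prove both directions of the equivalence by explicitly translating defining systems for Massey products on $G_1 * G_2$ into compatible pairs of defining systems on $G_1$ and $G_2$, using the universal property of free pro-$p$ products together with the Dwyer correspondence (Remark~\ref{rmk:Dwyer}) between defining systems and unipotent representations. The cup-product case (Lemma~\ref{lem:cup product}) already handles the hypothesis side, so the real content is transferring the $n$-fold defining system.

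\textbf{The ``if'' direction.} Suppose $G_1$ and $G_2$ both have the cup-defining $n$-fold Massey product property. Let $\chi_1,\ldots,\chi_n \in H^1(G_1*G_2,\F_p) = \Hom(G_1*G_2,\F_p)$ satisfy $\chi_i \cup \chi_{i+1} = 0$ for all $i$. By Lemma~\ref{lem:cup product}, for each $k\in\{1,2\}$ we have $\,^k\chi_i \cup \,^k\chi_{i+1} = 0$ in $H^2(G_k,\F_p)$, so by hypothesis the $n$-fold Massey product $\langle \,^k\chi_1,\ldots,\,^k\chi_n\rangle$ is defined; pick a defining system $\sM^k = \{a^k_{ij}\}$ for it, which by Remark~\ref{rmk:Dwyer} corresponds to a homomorphism $\bar\rho^k \colon G_k \to \bar\U_{n+1}(\F_p)$ with $(\bar\rho^k)_{i,i+1} = -\,^k\chi_i$. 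Using the universal property of the free pro-$p$ product, there is a unique $\bar\rho\colon G_1*G_2 \to \bar\U_{n+1}(\F_p)$ with $\,^k\bar\rho = \bar\rho^k$; one checks $\bar\rho_{i,i+1} = -\chi_i$ since this holds after restriction to each factor and these restrictions jointly determine a homomorphism to $\F_p$. Then $\{-\bar\rho_{ij}\}$ is (again by Remark~\ref{rmk:Dwyer}) a defining system for $\langle \chi_1,\ldots,\chi_n\rangle$, so the latter is defined.

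\textbf{The ``only if'' direction.} Conversely, assume $G_1*G_2$ has the property; we must show $G_1$ does (and symmetrically $G_2$). Take $\psi_1,\ldots,\psi_n \in H^1(G_1,\F_p)$ with $\psi_i \cup \psi_{i+1}=0$. The natural projection $\pi\colon G_1*G_2 \to G_1$ induces $\pi^*\colon H^1(G_1,\F_p) \inj H^1(G_1*G_2,\F_p)$ compatible with cup products, so $\chi_i := \pi^*\psi_i$ satisfy $\chi_i\cup\chi_{i+1}=0$ and hence $\langle\chi_1,\ldots,\chi_n\rangle$ is defined; fix a defining system $\{a_{ij}\}$, equivalently $\bar\rho\colon G_1*G_2 \to \bar\U_{n+1}(\F_p)$ with $\bar\rho_{i,i+1} = -\chi_i = -\pi^*\psi_i$. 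Restricting along the inclusion $\iota_1\colon G_1 \to G_1*G_2$ and noting $\pi \circ \iota_1 = \id_{G_1}$, we get $\,^1\bar\rho \colon G_1 \to \bar\U_{n+1}(\F_p)$ with $(\,^1\bar\rho)_{i,i+1} = \iota_1^*(-\pi^*\psi_i) = -\psi_i$. By Remark~\ref{rmk:Dwyer} this yields a defining system for $\langle\psi_1,\ldots,\psi_n\rangle$, so that product is defined, as required.

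\textbf{Main obstacle.} The step requiring the most care is verifying that the homomorphism $\bar\rho$ obtained from the universal property really has the prescribed off-diagonal entries $\bar\rho_{i,i+1} = -\chi_i$ — that is, that the ``gluing'' is compatible with the already-fixed data on the subdiagonal. This is not hard but requires observing that a homomorphism $G_1*G_2 \to \F_p$ is determined by its restrictions to $G_1$ and $G_2$, and that $-\chi_i$ restricts to $-\,^k\chi_i = (\bar\rho^k)_{i,i+1}$ on each $G_k$; since the entry maps $(-)_{i,i+1}$ on $\bar\U_{n+1}(\F_p)$ are group homomorphisms, equality of $\bar\rho_{i,i+1}$ and $-\chi_i$ follows from equality after restriction to each factor. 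One should also note that no finiteness or Demushkin hypothesis is needed here — the statement and proof are purely formal consequences of the free-product universal property and the Dwyer dictionary — which is why this proposition applies in the generality of arbitrary pro-$p$-groups $G_1, G_2$.
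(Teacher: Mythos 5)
Your proof is correct and follows essentially the same route as the paper: Lemma~\ref{lem:cup product} (or, in your ``only if'' direction, functoriality of cup products along the retraction $\pi$, which amounts to the paper's choice $\,^1\chi_i=\psi_i$, $\,^2\chi_i=0$), the Dwyer dictionary of Remark~\ref{rmk:Dwyer}, and the universal property of the free pro-$p$ product, with the subdiagonal entries checked by restriction to the two factors exactly as in the paper.
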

\begin{proof}Assume that $G_1$ and $G_2$ have the cup-defining $n$-fold Massey product property. Let $\chi_1,\ldots,\chi_n$ be elements in $H^1(G_1*G_2,\F_p)$ such that 
\[
0=\chi_1\cup\chi_2=\cdots=\chi_{n-1}\cup\chi_n.
\]
By Lemma~\ref{lem:cup product}, we have
\[
0=\,^1\chi_1\cup\,^1\chi_2=\cdots=\,^1\chi_{n-1}\cup\,^1\chi_n.
\]
Since $G_1$ has the cup-defining $n$-fold Massey product property, the $n$-fold Massey product $\langle \,^1\chi_1,\ldots,\,^1\chi_n \rangle$ is defined. Therefore there exists a homomorphism $\rho_1\colon G_1\to \bar{\U}_{n+1}(\F_p)$ such that $(\rho_1)_{i,i+1}=-\,^1\chi_i$, for $i=1,\ldots,n$. 
Similarly  there exists a homomorphism $\rho_2\colon G_2\to \bar{\U}_{n+1}(\F_p)$ such that $(\rho_2)_{i,i+1}=-\,^2\chi_i$, for $i=1,\ldots,n$. The universal property of free products implies that there exists  a unique homomorphism $\rho\colon G_1*G_2\to \bar{\U}_{n+1}(\F_p)$ such that $\,^1\rho=\rho_1$ and $\,^2\rho=\rho_2$. Then for each $i=1,\ldots,n$, we have
\[
\begin{aligned}
\,^1(\rho_{i,i+1})=(\,^1\rho)_{i,i+1}=(\rho_1)_{i,i+1}=-\,^1\chi_i,\\
\,^2(\rho_{i,i+1})=(\,^2\rho)_{i,i+1}=(\rho_2)_{i,i+1}=-\,^2\chi_i.
\end{aligned}
\]
Therefore $\rho_{i,i+1}=-\chi_i$. This implies that $\langle \chi_1,\ldots,\chi_n\rangle$ is defined, as desired.

Conversely, assume that $G_1*G_2$ has  the cup-defining $n$-fold Massey product property. Let $u_1,\ldots,u_n$ be elements of $H^1(G_1,\F_p)$ such that
$0=u_1\cup u_2=\cdots=u_{n-1}\cup u_n$. For each $i=1,\ldots,n$ there exists $\chi_i\in H^1(G_1*G_2,\F_p)$ such that $\,^1\chi_i=u_i$ and $\,^2\chi_i=0$. By Lemma~\ref{lem:cup product}, $0=\chi_1\cup\chi_2=\cdots=\chi_{n-1}\cup\chi_n$. Thus the $n$-fold Massey product $\langle\chi_1,\ldots,\chi_n\rangle$ is defined. Then there exists a homomorphism $\rho\colon G_1*G_2\to \bar{\U}_{n+1}(\F_p)$ such that $\rho_{i,i+1}=-\chi_i$ for $i=1,\ldots,n$. We have
\[
(\,^1\rho)_{i,i+1}=\,^1(\rho_{i,i+1})=-\,^1\chi_i=-u_i.
\]
This implies that $\langle u_1,\ldots,u_n\rangle$ is defined. Therefore $G_1$ has  the cup-defining $n$-fold Massey product property.

Similarly $G_2$ has  the cup-defining $n$-fold Massey product property.
\end{proof} 

In the following lemma we use standard results about skew-symmetric bilinear forms. It is worth to point out that the result below on non-alternate form is due to A. A. Albert \cite[Theorem 6]{A}.
\begin{lem}
\label{lem:Jacobson LA}
  Let $V$ be an $\F_p$-vector space of finite dimension $d\geq 3$. Let $(\cdot,\cdot)\colon V\times V\to \F_p$ be a  skew-symmetric bilinear from on $V$. Then there exists a basis $v_1,v_2,\ldots,v_n$ of $V$ such that 
\[
(v_1,v_2)=(v_2,v_3)=\cdots=(v_{d-1},v_d)=0.
\]
\end{lem}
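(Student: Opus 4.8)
The plan is to first strip off the radical of the form, then appeal to the two standard normal-form theorems for skew-symmetric forms, and finally rearrange a model basis so that consecutive vectors become orthogonal. Write $R=\{v\in V:(v,w)=0\text{ for all }w\in V\}$ for the radical and choose any vector-space complement $W$, so $V=R\oplus W$; because $R$ is the radical this decomposition is automatically orthogonal, $(\cdot,\cdot)|_W$ is non-degenerate, and we may fix a basis $g_1,\dots,g_k$ of $R$. Each $g_l$ is orthogonal to all of $V$, so such vectors can be spliced anywhere into a chain of consecutively-orthogonal vectors at no cost; the real task is thus to produce a suitable ordered basis of $W$.

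Next I would split into two cases according to whether the form is alternating. If $(v,v)=0$ for all $v\in V$ (automatic when $p$ is odd), then $(\cdot,\cdot)|_W$ is a non-degenerate alternating form, so $\dim W=2m$ and $W$ has a symplectic basis $e_1,f_1,\dots,e_m,f_m$. Then the ordered basis $e_1,\dots,e_m,g_1,\dots,g_k,f_1,\dots,f_m$ of $V$ works: distinct $e_i$ are mutually orthogonal, distinct $f_i$ are mutually orthogonal, the $g_l$ are orthogonal to everything, and the only consecutive pair that could conceivably fail, namely $(e_m,f_1)$ in the case $k=0$, is harmless since $d=2m\ge3$ forces $m\ge2$ (the subcases $m=0$ and $m=1$ being trivial or not arising). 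If instead $p=2$ and the form is not alternating, then $(\cdot,\cdot)|_W$ is non-degenerate and still not alternating (writing $v=r+w$ with $r\in R$, $w\in W$ gives $(v,v)=(w,w)$), and here I would invoke Albert's theorem \cite[Theorem 6]{A} that a non-alternate symmetric form over a field admits an orthogonal basis $w_1,\dots,w_s$ of $W$. Now every off-diagonal pairing vanishes, so literally any ordering is fine — for instance $w_1,\dots,w_s,g_1,\dots,g_k$ — and the proof is complete.

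I expect the only non-elementary ingredient, hence the main obstacle, to be the non-alternating characteristic-$2$ case: there one has no symplectic normal form and genuinely needs Albert's diagonalization theorem (equivalently, the fact that over $\F_2$ a non-degenerate non-alternating symmetric form is isomorphic to $\langle1\rangle^{\perp s}$). Everything else — splitting the radical off orthogonally, the symplectic normal form, and the two reorderings — is routine linear algebra, the only point requiring mild care being the low-dimensional degenerate subcases in the alternating branch.
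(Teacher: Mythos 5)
Your proof is correct and follows essentially the same route as the paper: split into the alternate and non-alternate cases, invoke the hyperbolic-planes-plus-radical (resp.\ diagonal, via Albert/Jacobson) normal form, and then reorder the basis so that consecutive vectors are orthogonal. The only cosmetic difference is that you split off the radical by hand and dispose of the degenerate subcases using $d\geq 3$, whereas the paper applies Jacobson's theorems directly to the possibly degenerate form and, in its $r=1$ alternate case, inserts the radical vectors between the two members of the single hyperbolic pair.
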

\begin{proof} 
If the form $(\cdot,\cdot)$ is alternate, i.e., $(v,v)=0$ for all $v\in V$, then \cite[Chapter V, Theorem 7]{Ja} implies that there exists a basis $\{u_1,w_1,u_2,w_2,\ldots, u_r,w_r, z_1,\ldots,z_{d-2r}\}$ for $V$ such that
$V$ is the orthogonal sum $\sL(u_1,w_1)\perp \cdots\perp \sL(u_r,w_r)\perp \F_pz_1\perp\cdots \perp \F_p z_{d-2r} $, here $\sL(u_i,w_i)$ is the vector subspace  in $V$ generated by $u_i,w_i$. If $r=0$ then $z_1,...,z_d$ is our desired basis.
If $r=1$ then we  set $v_1=u_1$, $v_2=z_1,\ldots,v_{d-1}=z_{d-2}$, $v_d=w_1$, and  this is the desired basis.
If $r\geq 2$ then we set
\[
\begin{aligned}
v_i:=u_i & \text{ for } i=1,\ldots,r,\\
v_{r+i}:=w_i &\text{ for } i=1,\ldots,r,\\
v_{2r+i}=z_i &\text{ for } i=1,\ldots,d-2r.
\end{aligned}
\]
Then $\{v_1,\ldots,v_d\}$ is the desired basis

Assume now that the form $(\cdot,\cdot)$ is not alternate. Then \cite[Chapter V, Theorem 10]{Ja} implies that there exists a basis $v_1,\ldots,v_d$ for $V$ such that the matrix of $(\cdot,\cdot)$ relative to this basis is a diagonal matrix. In particular, we have
\[
(v_1,v_2)=(v_2,v_3)=\cdots=(v_{d-1},v_d)=0,
\]
as desired.
\end{proof}
A  finitely generated pro-$p$-group $G$ is a {\it one-relator} pro-$p$-group if   \[\dim_{\F_p} H^2(G,\F_p) =1.\] 
\begin{cor}
\label{cor:one-relator}
Let $G$ be a   finitely generated pro-$p$-group of rank $d\geq 3$. Assume that either $G$ is free or one-relator. Then there exists an $\F_p$-basis $\chi_1,\chi_2,\ldots,\chi_d$ for $H^1(G,\F_p)$ such that
\[
\chi_1\cup \chi_2=\chi_2\cup \chi_3=\cdots=\chi_{d-1}\cup \chi_d=0.
\]
\end{cor}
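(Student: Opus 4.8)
The plan is to deduce the corollary directly from Lemma~\ref{lem:Jacobson LA} by producing a suitable skew-symmetric bilinear form on $H^1(G,\F_p)$ and transporting a basis of the kind provided by that lemma.

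First I would set $V:=H^1(G,\F_p)$, which has dimension $d\geq 3$ over $\F_p$ by hypothesis. I want a skew-symmetric bilinear form $(\cdot,\cdot)\colon V\times V\to \F_p$ with the property that $\chi\cup\chi'=0$ in $H^2(G,\F_p)$ whenever $(\chi,\chi')=0$. In the one-relator case, $\dim_{\F_p}H^2(G,\F_p)=1$, so fixing an isomorphism $H^2(G,\F_p)\simeq\F_p$ (for instance via ${\rm tr}_r$ for a defining relator $r$, exactly as in Section~3.1) turns the cup product into a bilinear form $V\times V\to\F_p$; this form is skew-symmetric because the cup product on $H^1$ is graded-commutative. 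Then $\chi\cup\chi'=0$ in $H^2(G,\F_p)$ is literally the same as $(\chi,\chi')=0$. In the free case $H^2(G,\F_p)=0$, so every cup product of degree-one classes already vanishes, and one may simply take $(\cdot,\cdot)$ to be the zero form (which is trivially skew-symmetric); then the conclusion $\chi_i\cup\chi_{i+1}=0$ holds for \emph{any} basis, so there is nothing to arrange.

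Next I would apply Lemma~\ref{lem:Jacobson LA} to $(V,(\cdot,\cdot))$: since $d\geq 3$ and the form is skew-symmetric, there is a basis $v_1,\ldots,v_d$ of $V$ with $(v_i,v_{i+1})=0$ for all $1\leq i\leq d-1$. Setting $\chi_i:=v_i$ and using the identification of the form with the cup product, we get $\chi_i\cup\chi_{i+1}=0$ in $H^2(G,\F_p)$ for $1\leq i\leq d-1$, which is exactly the assertion. Since $v_1,\ldots,v_d$ is a basis of $V=H^1(G,\F_p)$, the $\chi_i$ form an $\F_p$-basis as required.

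I do not expect a serious obstacle here: the corollary is essentially a repackaging of Lemma~\ref{lem:Jacobson LA}. The only point that needs a little care is the one-relator case, where one must note that the cup-product pairing really is a well-defined $\F_p$-valued skew-symmetric bilinear form once $H^2(G,\F_p)$ is identified with $\F_p$ — this uses only that $\dim_{\F_p}H^2(G,\F_p)=1$ and graded-commutativity of the cup product, and in particular does \emph{not} require non-degeneracy (so, unlike in Proposition~\ref{prop:Demushkin-linear algebra}, Demushkin-ness is not needed and the form may well be degenerate, which is precisely why Lemma~\ref{lem:Jacobson LA} rather than Lemma~\ref{lem:LA} is the tool to invoke). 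The free case is then handled by the trivial observation above.
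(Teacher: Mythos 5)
Your proposal is correct and follows essentially the same route as the paper: the free case is immediate since $H^2(G,\F_p)=0$, and in the one-relator case one fixes an isomorphism $H^2(G,\F_p)\simeq\F_p$, observes the resulting cup-product form is skew-symmetric, and invokes Lemma~\ref{lem:Jacobson LA}. Your added remarks (non-degeneracy not needed, hence Lemma~\ref{lem:Jacobson LA} rather than Lemma~\ref{lem:LA}) are accurate and consistent with the paper's argument.
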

\begin{proof} This is clear if  $G$ is free. Assume that $G$ is one-relator. Then we can choose an isomorphism $\psi\colon H^2(G,\F_p)\simeq \F_p$ of $\F_p$-vector spaces. The bilinear form 
\[ 
H^1(G,\F_p)\times H^1(G,\F_p)\stackrel{\cup}{\to}H^2(G,\F_p)\stackrel{\psi}{\to}\F_p
\] is skew-symmetric. The statement then follows from Lemma~\ref{lem:Jacobson LA}.
\end{proof}

Assume now that $G=G_1*G_2$ is the free product of two finitely generated pro-$p$-groups $G_1$ and $G_2$. The restriction maps $res_i\colon H^1(G,\F_p)\to H^1(G_i,\F_p)$, $i=1,2$, define a homomorphism
\[
res\colon H^1(G,\F_p)\to  H^1(G_1,\F_p)\oplus H^1(G_2,\F_p).
\]
By \cite[Theorem 4.1.4]{NSW}, this homomorphism $res$ is an isomorphism. We shall identify $H^1(G,\F_p)$ with $H^1(G_1,\F_p)\oplus H^1(G_2,\F_p)$ via $res$.  Lemma~\ref{lem:cup product} implies that $u\cup v=0\in H^2(G,\F_p)$ for every $u\in H^1(G_1,\F_p)$, $v\in H^1(G_2,\F_p)$. We then immediately obtain the following lemma.

\begin{lem}
\label{lem:nice basis}
 Assume that $u_1,\ldots,u_d$ (respectively, $v_1,\ldots,v_e$) is an $\F_p$-basis of $H^1(G_1,\F_p)$ (respectively,  $H^1(G_2,\F_p)$) such that
\[
\begin{aligned}
&u_1\cup u_2=u_2\cup u_3=\cdots=u_{d-1}\cup u_d=0 \\
 &(respectively, v_1\cup v_2=v_2\cup v_3=\cdots=v_{e-1}\cup v_e=0).
 \end{aligned}
\]
Then $u_1,\ldots,u_d,v_1,\ldots,v_e$ is an $\F_p$-basis for $H^1(G,\F_p)$ which satisfies that
\[
0=u_1\cup u_2=\cdots=u_{d-1}\cup u_d=u_d\cup v_1=\cdots=v_{e-1}\cup v_{e}. \qedhere
\]
\end{lem}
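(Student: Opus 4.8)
The plan is to read this off directly from the two facts recorded immediately before the lemma: the identification $res\colon H^1(G,\F_p)\xrightarrow{\sim}H^1(G_1,\F_p)\oplus H^1(G_2,\F_p)$ (via \cite[Theorem 4.1.4]{NSW}), and Lemma~\ref{lem:cup product}. Throughout I write $u_i$ (resp.\ $v_j$) also for the class $res^{-1}(u_i,0)$ (resp.\ $res^{-1}(0,v_j)$) in $H^1(G,\F_p)$, so that $\,^1u_i=u_i$, $\,^2u_i=0$, and $\,^1v_j=0$, $\,^2v_j=v_j$.

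First I would dispose of the basis claim: since $res$ is an isomorphism and $u_1,\ldots,u_d$ (resp.\ $v_1,\ldots,v_e$) is a basis of $H^1(G_1,\F_p)$ (resp.\ $H^1(G_2,\F_p)$), the images $(u_1,0),\ldots,(u_d,0),(0,v_1),\ldots,(0,v_e)$ form a basis of the direct sum, hence $u_1,\ldots,u_d,v_1,\ldots,v_e$ is an $\F_p$-basis of $H^1(G,\F_p)$. Next, for the relations inside the $u$-block, fix $1\le i\le d-1$ and apply Lemma~\ref{lem:cup product} to $u_i,u_{i+1}\in H^1(G,\F_p)$: one has $\,^1u_i\cup\,^1u_{i+1}=u_i\cup u_{i+1}=0$ in $H^2(G_1,\F_p)$ by hypothesis, and $\,^2u_i\cup\,^2u_{i+1}=0$ trivially, so $u_i\cup u_{i+1}=0$ in $H^2(G,\F_p)$. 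The same argument with the roles of the factors exchanged gives $v_j\cup v_{j+1}=0$ for $1\le j\le e-1$. Finally, the bridge relation $u_d\cup v_1=0$ is exactly the special case of the remark preceding the lemma: since $\,^1u_d=u_d$, $\,^1v_1=0$, $\,^2u_d=0$, $\,^2v_1=v_1$, both $\,^1u_d\cup\,^1v_1$ and $\,^2u_d\cup\,^2v_1$ vanish, so Lemma~\ref{lem:cup product} gives $u_d\cup v_1=0$ in $H^2(G,\F_p)$. Chaining these equalities yields the displayed string of vanishing cup products.

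I do not expect a genuine obstacle here: the statement is an immediate assembly of Lemma~\ref{lem:cup product} with the Mayer--Vietoris-type decomposition of $H^1$ of a free pro-$p$ product. The only point requiring a little care is the bookkeeping of which of the two ``components'' $\,^1(\cdot)$, $\,^2(\cdot)$ of a given class is zero; once that is set up, every required equality is a single invocation of Lemma~\ref{lem:cup product} (and one should note in passing that the displayed chain in the statement has a typo, $u_{d_1}$ for $u_{d-1}$, which the argument makes clear).
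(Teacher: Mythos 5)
Your proposal is correct and is essentially the paper's own argument: the paper derives the lemma immediately from the identification $res\colon H^1(G,\F_p)\simeq H^1(G_1,\F_p)\oplus H^1(G_2,\F_p)$ together with Lemma~\ref{lem:cup product}, exactly as you do for the basis claim, the relations within each block, and the bridge relation $u_d\cup v_1=0$. Your reading of $u_{d_1}$ as a typo for $u_{d-1}$ is also correct.
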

\begin{thm}
\label{thm: Demushkin-Un}
Let $G$ be a finitely generated pro-$p$-group of rank $d$. Assume that $G={*}_{i\in I} G_i$ is a finite free product of pro-$p$-groups $G_i$ ($|I|=1$ is allowed), where each $G_i$ is either a free pro-$p$-group or a Demushkin group. Let $n$ be a natural number $\geq 2$. 
 Then $\U_n(\F_p)$ is a homomorphic image of $G$ if and only if $n\leq d+1$.
\end{thm}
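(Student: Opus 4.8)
The plan is to handle the two implications separately: necessity is a rank count, and sufficiency is obtained by constructing a single surjection $G \twoheadrightarrow \U_n(\F_p)$ by hand from a carefully chosen basis of $H^1(G,\F_p)$. For necessity, a surjection $G \twoheadrightarrow \U_n(\F_p)$ induces an injection $H^1(\U_n(\F_p),\F_p) \hookrightarrow H^1(G,\F_p)$ (\cite[Proposition 1.6.14 (ii)]{NSW}, exactly as used in the proof of Lemma~\ref{lem:linear independence}); since the superdiagonal entries identify the abelianization of $\U_n(\F_p)$ with $(\F_p)^{n-1}$, we have $\dim_{\F_p} H^1(\U_n(\F_p),\F_p) = n-1$, whence $n-1 \leq d$, i.e.\ $n \leq d+1$.

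For sufficiency, assume $n \leq d+1$. If $n \leq 2$ this is immediate, since $\U_1$ is trivial, $\U_2(\F_p) \cong \F_p$, and $d \geq n-1$. So suppose $3 \leq n \leq d+1$; I would first reduce to the case $d \geq 3$ (if $d \leq 2$ then necessarily $n=3$ and a short direct argument applies). The heart of the proof is to choose an $\F_p$-basis $\chi_1,\dots,\chi_d$ of $H^1(G,\F_p)$ with
\[
\chi_1 \cup \chi_2 = \chi_2 \cup \chi_3 = \cdots = \chi_{d-1} \cup \chi_d = 0,
\]
compatible with the decomposition $H^1(G,\F_p) = \bigoplus_{i \in I} H^1(G_i,\F_p)$ of \cite[Theorem 4.1.4]{NSW} in the sense that each $\chi_j$ lies in a single summand $H^1(G_{i(j)},\F_p)$ with $i(1) \leq i(2) \leq \cdots \leq i(d)$. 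To build such a basis I would apply Corollary~\ref{cor:one-relator} to each factor $G_i$ (each being free or, as a Demushkin group, one-relator) to obtain on every $H^1(G_i,\F_p)$ an ordered basis with vanishing consecutive cup products, and then concatenate these bases over $i$ by repeated application of Lemma~\ref{lem:nice basis}, using Lemma~\ref{lem:cup product} to ensure that basis vectors lying in different summands always cup to zero.

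Given such a basis, let $\,^i\chi_j$ denote the restriction of $\chi_j$ to $G_i$, so $\,^i\chi_j = \chi_j$ when $i(j)=i$ and $\,^i\chi_j = 0$ otherwise; thus the $(n-1)$-tuple $(\,^i\chi_1,\dots,\,^i\chi_{n-1})$ has vanishing consecutive cup products in $H^1(G_i,\F_p)$. When $G_i$ is free this tuple has a complete defining system because $H^2(G_i,\F_p)=0$; when $G_i$ is Demushkin it has one by the stronger statement established in the proof of Proposition~\ref{prop: Demushkin n-Massey}, which furnishes a complete defining system (equivalently $0$ in the Massey product) for any tuple with vanishing consecutive cups, even allowing zero or repeated entries. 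By Remark~\ref{rmk:Dwyer} a complete defining system for $G_i$ is the same datum as a homomorphism $\rho_i \colon G_i \to \U_n(\F_p)$ with $(\rho_i)_{j,j+1} = -\,^i\chi_j$ for $1 \leq j \leq n-1$. Gluing the $\rho_i$ via the universal property of the free pro-$p$ product, just as in the proof of Proposition~\ref{prop:free product}, yields $\rho \colon G \to \U_n(\F_p)$ whose $(j,j+1)$ entry restricts to $-\,^i\chi_j$ on each $G_i$; since restriction $H^1(G,\F_p) \to \bigoplus_i H^1(G_i,\F_p)$ is injective, $\rho_{j,j+1} = -\chi_j$ for $1 \leq j \leq n-1$. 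As $\chi_1,\dots,\chi_{n-1}$ are $\F_p$-linearly independent, Lemma~\ref{lem:linear independence} shows that the composite of $\rho$ with the projection of $\U_n(\F_p)$ onto its abelianization $(\F_p)^{n-1}$ is surjective, hence $\rho$ itself is surjective.

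I expect the construction of the consecutive-isotropic basis of $H^1(G,\F_p)$ to be the main obstacle. Corollary~\ref{cor:one-relator} together with Lemma~\ref{lem:nice basis} handles it transparently for factors of rank $\geq 3$ and trivially for free factors, but care is needed in interleaving the summands and, above all, in treating rank-$2$ Demushkin factors, where the cup form on the $2$-dimensional $H^1$ may be a non-degenerate alternating form admitting no isotropic line; there one must use generators coming from other factors to keep the two basis vectors of such a block non-adjacent, which is exactly why one passes to $d \geq 3$ at the outset. Everything after this step is formal bookkeeping with defining systems and the universal property of free pro-$p$ products.
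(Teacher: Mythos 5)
Your proposal follows essentially the same route as the paper's proof: necessity is the identical rank count (the paper phrases it through the near-diagonal projection and Lemma~\ref{lem:linear independence} rather than through $H^1(\U_n(\F_p),\F_p)$, but it is the same computation), and sufficiency rests on the same ingredients --- a basis of $H^1(G,\F_p)$ with vanishing consecutive cup products obtained from Corollary~\ref{cor:one-relator}, Lemma~\ref{lem:cup product} and Lemma~\ref{lem:nice basis}, then the relaxed defining condition of Proposition~\ref{prop: Demushkin n-Massey}, Dwyer's dictionary, and a Frattini argument. The only structural difference is that you build the representation factor-by-factor (using the ``complete defining system'' form of Proposition~\ref{prop: Demushkin n-Massey}, which indeed allows zero entries) and glue via the universal property of the free product, i.e.\ you re-run the relevant direction of Proposition~\ref{prop:free product} in this special case, whereas the paper applies the defining/vanishing statement to $G$ itself and invokes \cite{MT1} once; these are interchangeable. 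Your insistence on interleaving the bases of the factors so that the two basis vectors of a rank-two Demushkin block are never adjacent is in fact more careful than the paper, whose appeal to Corollary~\ref{cor:one-relator} and Lemma~\ref{lem:nice basis} tacitly assumes each factor has rank at least $3$.

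The one genuine problem is the deferred claim that for $d\le 2$ (hence $d=2$, $n=3$) ``a short direct argument applies.'' In the remaining case where $G$ is a single Demushkin factor of rank $2$ with $q(G)\neq 2$ --- for instance $G=\Z_p\times\Z_p$ --- no such argument exists: the cup form on the two-dimensional space $H^1(G,\F_p)$ is then non-degenerate and alternating, so no two $\F_p$-linearly independent classes cup to zero, while a surjection $G\surj \U_3(\F_p)$ would produce exactly such a pair $(-\rho_{12},-\rho_{23})$ by Remark~\ref{rmk:Dwyer} and Lemma~\ref{lem:linear independence} (for $\Z_p\times\Z_p$ this is already clear because $\U_3(\F_p)$ is non-abelian). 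Only the rank-two case with $q=2$ survives, where $(v_1,v_1)=1$ lets one take the pair $v_1$, $v_1+v_2$. So you cannot close your reduction as promised; you would have to either prove the theorem only for $d\ge 3$ (plus the unproblematic small cases) or add a hypothesis excluding a single rank-two Demushkin factor with $q\neq 2$. Note that this blind spot is inherited from the paper itself: its proof quotes Corollary~\ref{cor:one-relator}, which requires rank at least $3$, and never addresses $d=2$, so the published statement has the same defect rather than a repair you could import.
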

\begin{proof}
Suppose first that there exists a surjective homomorphism $\rho\colon G\to \U_n(\F_p)$. Then the homomorphism 
\[
\varphi=(\rho_{12},\ldots,\rho_{n-1,n})\colon G\to \F_p\times \cdots \times \F_p
\]
induced by the projection of $\U_n(\F_p)$ on its near-by diagonal is also surjective. Hence by Lemma~\ref{lem:linear independence}  we see that $\rho_{12},\ldots,\rho_{n-1,n}$ are $\F_p$-linearly independent   in $H^1(G,\F_p)$. Therefore 
\[
n-1\leq \dim_{\F_p} H^1(G,\F_p)=d.
\]

We assume now that $n\leq d+1$. 

If $n=2$ then $\U_2(\F_p)\simeq \F_p$ and $1\leq d$. Hence $\U_2(\F_p)$ is a quotient of $G$. We suppose that $n\geq 3$. 
By Corollary~\ref{cor:one-relator} and Lemma~\ref{lem:nice basis} we can find a basis $v_1,\ldots,v_d$ for $H^1(G,\F_p)$ such that 
\[
v_1\cup v_2=v_2\cup v_3=\cdots = v_{d-1}\cup v_{d}=0.
\]
Then by Proposition~\ref{prop: Demushkin n-Massey} the $n-1$-fold Massey product $\langle v_1,\ldots,v_{n-1}\rangle$ is defined, and hence contains 0 by \cite[Theorem 4.2]{MT1}. Thus we obtain a representation 
$\rho\colon G\to \U_n(\F_p)$ such that $\rho_{i,i+1}=-v_i$ for every $i=1,2,\ldots,n-1$. By Lemma~\ref{lem:linear independence}, the induced homomorphism 
\[ \varphi=(\rho_{12},\ldots,\rho_{n-1,n})\colon G\to \F_p\times \cdots\times \F_p\]
is surjective. By usual Frattini argument applied to target group $\U_n(\F_p)$, this implies that $\rho$ is also surjective.
\end{proof}
\begin{rmk} In the famous Bourbaki report on Demushkin groups \cite{Se1}, J. -P. Serre in 1962 finally obtained the explicit description of $G_{\Q_2}(2)$ via generators and relations. I. R. Shafarevich in his very interesting article \cite[Section 2]{Sha2}, highlighted this group as the most curious group. We observe that we have now complete information about the number of Galois $\U_n(\F_2)$-extensions of $\Q_2$ for every $n\geq 2$. If $n=2$ then this number is 7.
If $n=3$ then this number is $18$ (see Remark~\ref{rmk:CP}). If $n=4$ then this number is 16 by Theorem~\ref{thm: local U4-extensions}. Finally from Theorem~\ref{thm: Demushkin-Un} we see that there are no Galois $\U_n(\F_2)$-extensions for $n\geq 5$. In \cite{AMT} we list all $\U_n(\F_2)$-Galois extensions of $\Q_2$ for all $n\geq 2$.
\end{rmk}
\section{Free products of Demushkin groups and free pro-$p$-groups}
Recall that we have the following exact sequence of finite groups
\[
1 \longrightarrow M \longrightarrow \U_4(\F_p) \xrightarrow{(a_{12},a_{23},a_{34})} (\F_p)^3\longrightarrow 1,
\]
where $a_{ij}\colon\U_4(\F_p)\to\F_p$ is the map sending a matrix to its $(i,j)$-coefficient, and $M$ is the kernel of the indicated homomorphism $(a_{12},a_{23},a_{34})$. This exact sequence induces a $B:=(\F_p)^3$-module structure on $M$.

Let $G=G_1*G_2$ be a free product in the category of pro-$p$-groups of two pro-$p$-groups $G_1$ and $G_2$.
Now let $\varphi\colon G\to B$ be any surjective continuous homomorphism.  Let $\varphi_1$ (respectively, $\varphi_2$) be the composition $\varphi_1\colon G_1\to G_1*G_2\stackrel{\varphi}{\to} B$ (respectively, $\varphi_2\colon G_2\to G_1*G_2\stackrel{\varphi}{\to} B$).
We consider $M$ as a $G$-module, denoted $M_\varphi$, via $\varphi$ and the action of $B$ on $M$. Note that $M$ is killed by $p$. Similarly, we have $G_1$-module $M_{\varphi_1}$ and $G_2$-module $M_{\varphi_2}$.

\subsection{A free product of a Demushkin group and a free pro-$p$-group}
In this subsection we assume that  $G_1$ is a Demushkin group of rank $d$, and that $G_2$ is a free pro-$p$-group of rank $e$. 

\begin{lem}
\label{lem:counting general Z1}
\mbox{}
\begin{enumerate}
\item If $\im \varphi_1\subseteq \{0\}\times \F_p\times \{0\}$ then  $|Z^1(G,M_{\varphi})|=p^{3d+3e}$.
\item If $\im \varphi_1\not\subseteq \{0\}\times \F_p\times \{0\}$ then $|Z^1(G,M_{\varphi})|=p^{3d+3e-1}$.
\end{enumerate}
\end{lem}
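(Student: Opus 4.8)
The plan is to reduce the computation of $|Z^1(G,M_\varphi)|$ for the free product $G = G_1 * G_2$ to the two factors, where $G_1$ is Demushkin (so Lemma~\ref{lem:counting Z1} applies) and $G_2$ is free (so Example~\ref{ex:Z1 free} applies). First I would observe that a $1$-cocycle on a free product is the same data as a pair consisting of a $1$-cocycle on $G_1$ and a $1$-cocycle on $G_2$ that are compatible with the respective restrictions of $\varphi$: indeed, by the Mayer--Vietoris/restriction isomorphism for free pro-$p$ products (\cite[Theorem 4.1.4]{NSW}), the restriction map
\[
\mathrm{res}\colon Z^1(G,M_\varphi)\to Z^1(G_1,M_{\varphi_1})\times Z^1(G_2,M_{\varphi_2})
\]
is injective, and it is surjective because a cocycle can be defined freely and independently on the (topological) generators of $G_1$ and of $G_2$, these two generating sets together forming a minimal generating set of $G = G_1 * G_2$. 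Hence $|Z^1(G,M_\varphi)| = |Z^1(G_1,M_{\varphi_1})|\cdot |Z^1(G_2,M_{\varphi_2})|$.

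Next I would compute each factor. Since $G_2$ is free pro-$p$ of rank $e$ and $M$ has order $p^3$, Example~\ref{ex:Z1 free} gives $|Z^1(G_2,M_{\varphi_2})| = |M|^e = p^{3e}$ regardless of $\varphi_2$. For the Demushkin factor $G_1$ of rank $d$, Lemma~\ref{lem:counting Z1} gives $|Z^1(G_1,M_{\varphi_1})| = p^{3d}$ if $\im\varphi_1\subseteq \{0\}\times\F_p\times\{0\}$, and $|Z^1(G_1,M_{\varphi_1})| = p^{3d-1}$ otherwise. Multiplying the two contributions yields $p^{3d+3e}$ in the first case and $p^{3d+3e-1}$ in the second, which is exactly the claimed dichotomy.

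The main obstacle I anticipate is making the surjectivity of $\mathrm{res}$ on the level of cocycles (not just cohomology) fully rigorous in the profinite/continuous setting: one must check that prescribing a continuous cocycle separately on $G_1$ and on $G_2$ really does glue to a continuous cocycle on the free pro-$p$ product, using the universal property of $G_1 * G_2$ together with the fact that $M_\varphi$ is finite and discrete. A clean way around any delicacy is to argue with weak solutions of embedding problems instead, exactly as in Example~\ref{ex:Z1 free}: apply Lemma~\ref{lem:Sol} to the weak embedding problem $\sE(G, f\colon \U_4(\F_p)\to B,\ \varphi\colon G\to B)$, note it has a weak solution because by Remark~\ref{rmk:Dwyer} (or directly) a lift exists over each factor and hence over the free product, and count weak solutions as pairs of weak solutions over $G_1$ and $G_2$; the counts of these are governed by Lemma~\ref{lem:counting Z1} and Example~\ref{ex:Z1 free} respectively. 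Either route gives $|Z^1(G,M_\varphi)| = |Z^1(G_1,M_{\varphi_1})|\cdot |Z^1(G_2,M_{\varphi_2})|$, and the rest is the bookkeeping above.
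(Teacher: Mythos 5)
Your argument is correct and essentially identical to the paper's: the authors likewise invoke the isomorphism $Z^1(G,M_\varphi)\cong Z^1(G_1,M_{\varphi_1})\oplus Z^1(G_2,M_{\varphi_2})$ from the proof of \cite[Theorem 4.1.4]{NSW}, compute the free factor via Example~\ref{ex:Z1 free} and the Demushkin factor via Lemma~\ref{lem:counting Z1}, and multiply. Your extra remarks on continuity and the embedding-problem reformulation are fine but not needed beyond what the cited NSW result already provides.
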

\begin{proof} Since $G_2$ is free of rank $e$, $\dim_{\F_p} Z^1(G_2,M_{\varphi_2})=3e$ by Example~\ref{ex:Z1 free}. Hence  
\[ \dim_{\F_p} Z^1(G,M_\varphi)=\dim_{\F_p}Z^1(G_1,M_{\varphi_1})+\dim_{\F_p}Z^1(G_2,M_{\varphi_2})=\dim_{\F_p}Z^1(G_1,M_{\varphi_1})+3e,\]
because $ Z^1(G,M_\varphi)\cong Z^1(G_1,M_{\varphi_1})\oplus Z^1(G_2,M_{\varphi_2})$ (see \cite[Proof of Theorem 4.1.4]{NSW}). The lemma then follows from Lemma~\ref{lem:counting Z1}.
\end{proof}
\begin{lem}
\label{lem:general LA}
Let $V$ and $W$ be two finite dimensional vector spaces over the finite field $\F_p$ with basis $\{v_1,v_2,\ldots,v_{d}\}$ and $\{w_1,\ldots,w_e\}$ respectively. Let $(\cdot,\cdot)\colon (V\oplus W)\times (V\oplus W)\to \F_p$ be a skew-symmetric bilinear form on $V\oplus W$. Assume that $(\cdot,\cdot)$ is non-degenerate when restricted to $V$ and trivial on $W$, and that $(v,w)=0$ for all $v\in V$, $w\in W$.
Let $N$ be the number of triples $(x,y,z)\in (V\oplus W)^3$ such that $(x,y)=(y,z)=0$ and that $x,y,z$ are $\F_p$-linearly independent.
\begin{enumerate}

\item If $(v_i,v_i)=0$ for every $1\leq i\leq d$, then 
\[N=(p^d-1)p^e(p^{d+e-1}-p)(p^{d+e-1}-p^2)+(p^e-1)(p^{d+e}-p)(p^{d+e}-p^2).\]
\item If $(v_1,v_1)=1$ and $(v_i,v_i)=0$ for every $2\leq i\leq d$, then $p=2$ and
\[
\begin{split}
N=(2^{d+e-1}-2)(2^{2d+2e-1}+3\cdot 2^{d+2e-1}-9\cdot2^{d+e-1}+4).
\end{split}
\]
\end{enumerate}
\end{lem}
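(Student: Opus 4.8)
The plan is to imitate the argument of Lemma~\ref{lem:LA}, stratifying the count of triples $(x,y,z)$ according to $y$ and, within each stratum, computing the number of admissible pairs $(x,z)\in (y^\perp)^2$ with $x,y,z$ linearly independent. Here $y^\perp:=\{v\in V\oplus W\mid (v,y)=0\}$. For any $y\neq 0$ write $y=y_V+y_W$ with $y_V\in V$, $y_W\in W$. Because the form is trivial on $W$ and kills all mixed pairs, $(v,y)=(v,y_V)$ for all $v$, so $y^\perp=y_V^\perp$ depends only on $y_V$. If $y_V\neq 0$ then, since $(\cdot,\cdot)$ is non-degenerate on $V$, $y^\perp$ has codimension $1$ in $V\oplus W$, i.e. $\dim y^\perp=d+e-1$; moreover $W\subseteq y^\perp$. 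If $y_V=0$ (so $y\in W\setminus\{0\}$) then $y^\perp=V\oplus W$, of dimension $d+e$. The key dichotomy, exactly as in Lemma~\ref{lem:LA}, is whether $y\in y^\perp$, equivalently whether $(y,y)=0$: if $y\in y^\perp$ then $|M(y)|=(|y^\perp|-p)(|y^\perp|-p^2)$ (choose $x\notin\F_p y$, then $z\notin\F_p y+\F_p x$, both inside $y^\perp$), while if $y\notin y^\perp$ then $y$ is automatically independent from $y^\perp$ (the argument from Case 2 of Lemma~\ref{lem:LA} applies verbatim: $0=(ay+bv,y)=a(y,y)$ forces $a=0$, then $b=0$) and $|M(y)|=(|y^\perp|-1)(|y^\perp|-p)$, the count of $\F_p$-linearly independent pairs in $y^\perp$.

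For part (1), every diagonal entry $(v_i,v_i)$ vanishes, so by the computation in Lemma~\ref{lem:LA}(1) one has $(y,y)=0$ for every $y$, hence $y\in y^\perp$ always. Split $V\oplus W\setminus\{0\}$ into the $(p^d-1)p^e$ elements with $y_V\neq 0$ — for which $|y^\perp|=p^{d+e-1}$ and $|M(y)|=(p^{d+e-1}-p)(p^{d+e-1}-p^2)$ — and the $p^e-1$ elements with $y_V=0$ — for which $|y^\perp|=p^{d+e}$ and $|M(y)|=(p^{d+e}-p)(p^{d+e}-p^2)$. Summing gives exactly the stated formula for $N$. For part (2) we have $p=2$; here $(y,y)=1$ precisely when $y_V$ has a nonzero $v_1$-component, by the computation of Cases 1--2 in Lemma~\ref{lem:LA}(2) together with the fact that $W$-components never contribute. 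One now partitions $V\oplus W\setminus\{0\}$ into three classes: (i) $y_V=0$, giving $2^e-1$ elements, all with $(y,y)=0$, $|y^\perp|=2^{d+e}$, so $|M(y)|=(2^{d+e}-2)(2^{d+e}-4)$; (ii) $y_V\neq 0$ with no $v_1$-component, giving $(2^{d-1}-1)2^e$ elements, all with $(y,y)=0$, $|y^\perp|=2^{d+e-1}$, so $|M(y)|=(2^{d+e-1}-2)(2^{d+e-1}-4)$; (iii) $y_V$ with a nonzero $v_1$-component, giving $2^{d-1}2^e$ elements, all with $(y,y)=1$, $|y^\perp|=2^{d+e-1}$, so $|M(y)|=(2^{d+e-1}-1)(2^{d+e-1}-2)$. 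Adding $N=\sum_y|M(y)|$ over the three classes and simplifying yields the claimed closed form.

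The routine but genuinely error-prone part — and the step I expect to be the main obstacle — is the algebraic simplification at the end of part (2): one must verify that
\[
(2^e-1)(2^{d+e}-2)(2^{d+e}-4)+(2^{d-1}-1)2^e(2^{d+e-1}-2)(2^{d+e-1}-4)+2^{d-1}2^e(2^{d+e-1}-1)(2^{d+e-1}-2)
\]
collapses to $(2^{d+e-1}-2)(2^{2d+2e-1}+3\cdot 2^{d+2e-1}-9\cdot 2^{d+e-1}+4)$. I would do this by setting $u:=2^{d+e-1}$ and $t:=2^{e}$ (so $2^{d+e}=2u$, $2^{d-1}=u/t$), expanding each product as a polynomial in $u$ and $t$, and matching coefficients against the target — taking care to factor out the common $(u-2)=(2^{d+e-1}-2)$ correctly at the end. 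A secondary point needing care is confirming the counting claim in the $(y,y)=1$ stratum, namely that $|M(y)|$ there equals the number of ordered $\F_2$-independent pairs in $y^\perp$ (not in a larger space), which follows from the independence of $y$ from $y^\perp$ noted above; and in the $y_V=0$ stratum one must also check $x,y,z$ independence properly — here $y$ lies in $y^\perp=V\oplus W$, so $|M(y)|$ is the number of pairs $(x,z)\in(y^\perp)^2$ with $\{x,y,z\}$ independent, i.e. $(|y^\perp|-p)(|y^\perp|-p^2)$ as recorded, consistent with the $y\in y^\perp$ formula above.
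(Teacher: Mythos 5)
Your proposal is correct and follows essentially the same route as the paper: stratify over $y$, note $y^\perp$ has dimension $d+e$ or $d+e-1$ according to whether $y\in W$ (equivalently $y_V=0$) or not, use $(y,y)=0$ versus $(y,y)=1$ to decide whether $|M(y)|=(|y^\perp|-p)(|y^\perp|-p^2)$ or $(|y^\perp|-1)(|y^\perp|-p)$, and sum over the same strata; your final simplification in part (2), e.g.\ via $u=2^{d+e-1}$ and factoring out $(u-2)$, indeed reproduces the stated closed form.
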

\begin{proof}
 For each $y\in V\oplus W \setminus \{0\}$, we denote 
\[
 y^\perp =\{ x\in V\oplus W \mid (x,y)=0\}.
\]
Then $y^\perp$ is an $\F_p$-vector space, and 
\[
\dim y^\perp=
\begin{cases}
\dim (V\oplus W) = d+e \text{ if } y\in W,\\
\dim V+ \dim W-1 = d+e-1 \text{ if } y\not\in W.
\end{cases}
\]

We set 
\[
\begin{aligned}
 M(y)&:=\{(x,z)\in (V\oplus W)^2 \mid (x,y)=(y,z)=0; x,y,z \text{ are $\F_p$-linearly independent}\}\\
&=\{(x,z)\in y^\perp\times y^\perp\mid x,y,z \text{ are $\F_p$-linearly independent}\}.
\end{aligned}
\]

(1) It is similar to Proof of Lemma~\ref{lem:LA}, we  can check that $y\in y^\perp$. Indeed, writing $y=\sum_{i\in I} a_i v_i+w$, $a_i\in \F_p$, $I\subseteq \{1,2,\ldots,d\}$, $w\in W$, then 
\[
\begin{aligned}
 (y,y)&=(\sum_{i\in I} a_i v_i+w, \sum_{j\in I} a_j v_j+w)=\sum_{i,j\in I}a_ia_j(v_i,v_j)+ \sum_{i\in I} a_i(v_i,w)+\sum_{i\in I} a_i(w,v_i)+(w,w)\\
&=\sum_{i\in I} a_i^2(v_i,v_i)+\sum_{\substack{ i\not=j,\\ i,j\in I}}a_ia_j(v_i,v_j)=0,
\end{aligned}
\]
because $(\cdot,\cdot)$ is skew-symmetric and $(v_i,v_i)=0$ for $1\leq i\leq d$, $(w,w)=0$ by assumption. Thus 
\[
 |M(y)|=
 \begin{cases}
 (p^{d+e}-p)(p^{d+e}-p^2) \text{  if } y\in W,\\
  (p^{d+e-1}-p)(p^{d+e-1}-p^2) \text{  if } y\not\in W.
 \end{cases}
\]
Therefore
\[
 N=\sum_{y\in V\oplus W\setminus\{0\}} |M(y)|=(p^e-1)(p^{d+e}-p)(p^{d+e}-p^2)+p^e(p^d-1)(p^{d+e-1}-p)(p^{d+e-1}-p^2),
\]
as desired.
\\
\\
(2) Let us write $y=\sum_{i\in I}v_i+w$, $I\subseteq \{1,2,\ldots,d\}$, $w\in W$.

{\bf Case 1:} $1\not\in I$. Then we claim that $y\in y^\perp$. Indeed
\[
\begin{aligned}
 (y,y)&=(\sum_{i\in I} v_i+w, \sum_{j\in I} v_j+w)=\sum_{i,j\in I}(v_i,v_j)+\sum_{i\in I}(v_i,w)+\sum_{i\in I}(w,v_i)+(w,w)\\
&=\sum_{i\in I} (v_i,v_i)+\sum_{\substack{ i\not=j,\\ i,j\in I}}(v_i,v_j)=0,
\end{aligned}
\]
because $(\cdot,\cdot)$ is skew-symmetric, $(w,w)=0$, and $(v_i,v_i)=0$ for $2\leq i\leq d$ by assumption. Thus 
\[
 |M(y)|=
 \begin{cases}
 (2^{d+e}-2)(2^{d+e}-4) \text{ if } y\in W,\\
 (2^{d+e-1}-2)(2^{d+e-1}-4). \text{ if } y\not\in W
 \end{cases}
\]

{\bf Case 2:} $1\in I$. Then we claim that $(y,y)=1$, and hence $y\not\in y^\perp$. Indeed, let $I^\prime:=I\setminus\{1\}$, then
\[
\begin{aligned}
 (y,y)&=(v_1+\sum_{i\in I^\prime} v_i+w, v_1+\sum_{i\in I^\prime} v_j+w)\\
&=(v_1,v_1)+ (v_1,\sum_{i\in I^\prime} v_i)+(\sum_{i\in I^\prime} v_i,v_1)+(\sum_{i\in I^\prime} v_i,\sum_{i\in I^\prime} v_i)\\
&=(v_1,v_1)=1,
\end{aligned}
\]
because $(\cdot,\cdot)$ is skew-symmetric, $(w,w)=0$ and $(\sum_{i\in I^\prime} v_i,\sum_{i\in I^\prime} v_i)=0$ by Case 1. 
We note also that $y$ is linearly independently from $y^\perp$, 
this means if $ay + bz = 0$ for some $z$ in $y^\perp$, then $a=b=0$.
(In fact, $0=(ay+bz,y)=a(y,y)+b(z,y)=a$, and hence $b=0$ also.) 
Thus $|M(y)|$ is  the number of pairs $(x,z)\in y^{\perp}$ such that $x,z$ are $\F_2$-linearly independent. Therefore  
\[
 |M(y)|=(2^{d+e-1}-1)(2^{d+e-1}-2).
\]

We then obtain
\[
\begin{aligned}
 N=&\sum_\text{$y$ in Case 1} |M(y)|+ \sum_\text{$y$ in Case 2} |M(y)|\\
=&(2^e-1)(2^{d+e}-2)(2^{d+e}-4)+2^e(2^{d-1}-1)(2^{d+e-1}-2)(2^{d+e-1}-4) +\\
&2^{d+e-1}(2^{d+e-1}-1)(2^{d+e-1}-2)\\
=& (2^{d+e-1}-2)(2^{2d+2e-1}+3\cdot 2^{d+2e-1}-9\cdot2^{d+e-1}+4),
\end{aligned}
\]
as desired.
\end{proof}

\begin{prop} Let $G=G_1*G_2$ be the free product of a Demushkin group $G_1$ of rank $d$ and a free group of rank $e$.  
Let $N$ be the number as in Lemma~\ref{lem:general LA}. Then $|{\rm Epi}(G,\U_4(\F_p))|$ is equal to
\[
\begin{split}
Np^{3d+3e-1}+ [p^d(p^e-1)(p^e-p)(p^e-p^2)+(p^d-1)p^2(p^e-1)(p^e-p)](p^{3d+3e}-p^{3d+3e-1}).
\end{split}
\]
\end{prop}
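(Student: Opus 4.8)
The plan is to invoke Proposition~\ref{prop:counting Epi} for $G=G_1*G_2$ and then settle the two combinatorial inputs it requires: the size of ${\rm TMP}(G,\F_p)$ and the values $|Z^1(G,M_\varphi)|$. First I would record the cohomology of the free product. By \cite[Theorem 4.1.4]{NSW} the restriction maps identify $H^1(G,\F_p)=V\oplus W$ with $V=H^1(G_1,\F_p)$ of dimension $d$ and $W=H^1(G_2,\F_p)$ of dimension $e$, and $H^2(G,\F_p)\cong H^2(G_1,\F_p)\cong\F_p$ because $H^2(G_2,\F_p)=0$. Under these identifications the cup product pairing on $H^1(G,\F_p)$ is exactly the skew-symmetric form treated in Lemma~\ref{lem:general LA}: it is non-degenerate on $V$ by the Demushkin property, it vanishes identically on $W$ since $H^2(G_2,\F_p)=0$, and it vanishes on mixed pairs by Lemma~\ref{lem:cup product}. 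Choosing on $V$ the basis provided by Proposition~\ref{prop:Demushkin-linear algebra} — case (1) if $q(G_1)\neq 2$, case (2) if $q(G_1)=2$, which forces $p=2$ — puts the form in the normal shape assumed in Lemma~\ref{lem:general LA}. In particular $H^1(G,\F_p)$ is finite, which together with Lemma~\ref{lem:counting general Z1} verifies the finiteness hypotheses of Proposition~\ref{prop:counting Epi}.

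Next I would show that ${\rm TMP}(G,\F_p)$ consists precisely of the $\F_p$-linearly independent triples $(x,y,z)$ with $x\cup y=y\cup z=0$, so that $|{\rm TMP}(G,\F_p)|=N$ for the number $N$ of Lemma~\ref{lem:general LA}. The inclusion $\subseteq$ is immediate from condition~(2) in the definition of a defining system applied to the pairs $(1,3)$ and $(2,4)$. For the reverse inclusion I must check that $x\cup y=y\cup z=0$ forces $\langle x,y,z\rangle$ not merely to be defined but to contain $0$; I would do this by the gluing argument used in the proof of Proposition~\ref{prop:free product} (specialized to $n=3$): restrict $x,y,z$ to $G_1$ and to $G_2$, use the vanishing triple Massey product property of $G_1$ (a Demushkin group, \cite[Theorem 4.2]{MT1}) and of $G_2$ (free, where $H^2=0$) to obtain lifts $\rho_i\colon G_i\to\U_4(\F_p)$ realizing the restrictions of $-x,-y,-z$ on the superdiagonal, and glue them by the universal property of the free pro-$p$ product into $\rho\colon G\to\U_4(\F_p)$ realizing $-x,-y,-z$.

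It then remains to split $|{\rm Epi}(G,\U_4(\F_p))|=\sum_{\varphi\in{\rm TMP}(G,\F_p)}|Z^1(G,M_\varphi)|$ according to the dichotomy of Lemma~\ref{lem:counting general Z1}. For $\varphi=(x,y,z)$ the condition $\im\varphi_1\subseteq\{0\}\times\F_p\times\{0\}$ says exactly that $x|_{G_1}=z|_{G_1}=0$, i.e.\ $x,z\in W$; Lemma~\ref{lem:counting general Z1} then gives $|Z^1(G,M_\varphi)|=p^{3d+3e}$ in that case and $p^{3d+3e-1}$ otherwise. Let $N_1$ be the number of triples in ${\rm TMP}(G,\F_p)$ with $x,z\in W$. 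Since $W$ pairs trivially with all of $H^1(G,\F_p)$ (again by $H^2(G_2,\F_p)=0$ and Lemma~\ref{lem:cup product}), the cup conditions are automatic for such triples, so $N_1$ is simply the number of linearly independent triples $(x,y,z)$ with $x,z\in W$: choosing $x\in W\setminus\{0\}$, then $z\in W\setminus\langle x\rangle$, then $y\in H^1(G,\F_p)\setminus\langle x,z\rangle$ and sorting $y$ according to whether its $W$-component lies in $\langle x,z\rangle$ gives
\[
N_1=p^d(p^e-1)(p^e-p)(p^e-p^2)+(p^d-1)p^2(p^e-1)(p^e-p).
\]
Hence $|{\rm Epi}(G,\U_4(\F_p))|=N_1 p^{3d+3e}+(N-N_1)p^{3d+3e-1}=N p^{3d+3e-1}+N_1(p^{3d+3e}-p^{3d+3e-1})$, which is the asserted formula; the argument is uniform in whether $q(G_1)=2$, only the value of $N$ taken from Lemma~\ref{lem:general LA} changing.

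I expect the main obstacle to be the identification of ${\rm TMP}(G,\F_p)$ in the second step, specifically the part asserting that $\langle x,y,z\rangle$ contains $0$, which genuinely exploits the free-product structure (via the universal property together with vanishing triple Massey products on the factors) rather than formal properties of triple Massey products. The remaining work is the elementary point count for $N_1$ and a routine bookkeeping of the two cases of Lemma~\ref{lem:counting general Z1}.
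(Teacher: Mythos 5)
Your proposal is correct and follows essentially the same route as the paper: Proposition~\ref{prop:counting Epi} combined with the dichotomy of Lemma~\ref{lem:counting general Z1} and the count $N$ of Lemma~\ref{lem:general LA}, with your direct count of the triples having $x,z\in W$ agreeing (after factoring) with the paper's case analysis on $\im\varphi_1$ and $\im\varphi_2$, namely $(p^e-1)(p^e-p)(p^{d+e}-p^2)=p^d(p^e-1)(p^e-p)(p^e-p^2)+(p^d-1)p^2(p^e-1)(p^e-p)$. You also spell out, via the gluing argument from Proposition~\ref{prop:free product} and \cite[Theorem 4.2]{MT1}, the identification $|{\rm TMP}(G,\F_p)|=N$, a point the paper uses implicitly; this is a welcome addition rather than a deviation.
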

\begin{proof}
Let $\varphi\in {\rm TMP}(G,\F_p)$. We also consider $\varphi$ as a surjective homomorphism $G\to B:=(\F_p)^3$.  Let $\varphi_1\colon G_1\to B$ be the composition $G_1\to G_1*G_2\stackrel{\varphi}{\to} B$ . We define $\varphi_2\colon G_2\to B$ similarly. Note that $\varphi$ is surjective if and only if $\im \varphi_1+\im\varphi_2=B$.
\\
\\
{\bf Case 1}: $\im\varphi_1=0\in B$. Then $\varphi$ is surjective if and only $\varphi_2$ is surjective. The number of $\varphi$'s in ${\rm TMP}(G,\F_p)$ with $\im\varphi_1=0$ is
\[
(p^e-1)(p^e-p)(p^e-p^2).
\]
{\bf Case 2}: $\im\varphi_1=\{0\}\times \F_p\times \{0\}$. 

{\bf Subcase 2.1}: $\im \varphi_2=B$. The number of such $\varphi$'s in ${\rm TMP}(G,\F_p)$  in this subcase is 
\[
(p^d-1)(p^e-1)(p^e-p)(p^e-p^2). 
\]

{\bf Subcase 2.2:} $\im \varphi_2\not=B$ and $\im\varphi_2+\{0\} \times \F_p \times \{0\}=B$. Then there exist $\lambda,\mu\in \F_p$ such that $\im\varphi_2=\{(a,\lambda a+\mu b,b)\mid a,b\in \F_p\}$. 
The number of such $\varphi$'s in ${\rm TMP}(G,\F_p)$  in this subcase is 
\[
(p^d-1)p^2(p^e-1)(p^e-p).
\]
{\bf Case 3}: $\im\varphi_1\not\subseteq \{0\}\times \F_p\times \{0\}$. The number of such $\varphi$'s in ${\rm TMP}(G,\F_p)$ in this case is just 
\[
N-M,
\]
where $N=|{\rm TMP}(G,\F_p)|$, which is computed as in Lemma~\ref{lem:general LA}, and $M$ is given by
\[
\begin{aligned}
M&:=&(p^e-1)(p^e-p)(p^e-p^2)+(p^d-1)(p^e-1)(p^e-p)(p^e-p^2)\\
&&+(p^d-1)p^2(p^e-1)(p^e-p)\\
&=& p^d(p^e-1)(p^e-p)(p^e-p^2)+(p^d-1)p^2(p^e-1)(p^e-p)
\end{aligned}.
\]
Combining with Proposition \ref{prop:counting Epi} and Lemma~\ref{lem:counting general Z1}, we obtain the result.
\end{proof}
Observe that setting $d = 0$ we recover a special case of Shafarevich's result in \cite{Sha} related to the case when $G$ is a free pro-$p$-group.
\subsection{A free product of two Demushkin groups}
In this subsection we assume that $G_1$ and $G_2$ are Demushkin groups of rank $d$ and $e$ respectively. For simplicity we shall consider the case both $q$-invariants of $G_1$ and $G_2$ are greater than 2.

\begin{lem}
\label{lem:counting general Z1 DD}
\mbox{}
\begin{enumerate}
\item If either $\im \varphi_1$ or $\im\varphi_2$, but not both, is a subspace of $\{0\}\times \F_p\times \{0\}$ then  $|Z^1(G,M_{\varphi})|=p^{3d+3e-1}$.
\item If neither $\im \varphi_1$ nor $\im\varphi_2$ is  a subspace of $\{0\}\times \F_p\times \{0\}$ then $|Z^1(G,M_{\varphi})|=p^{3d+3e-2}$.
\end{enumerate}
\end{lem}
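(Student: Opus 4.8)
The plan is to reduce everything to the single--Demushkin computation of Lemma~\ref{lem:counting Z1}, in exactly the same way that Lemma~\ref{lem:counting general Z1} handled the free product of a Demushkin group with a free pro-$p$-group. The key input is that for the free pro-$p$ product $G=G_1*G_2$ there is a natural isomorphism of $\F_p$-vector spaces
\[
Z^1(G,M_\varphi)\cong Z^1(G_1,M_{\varphi_1})\oplus Z^1(G_2,M_{\varphi_2}),
\]
coming from \cite[Proof of Theorem 4.1.4]{NSW}, so that
\[
\dim_{\F_p}Z^1(G,M_\varphi)=\dim_{\F_p}Z^1(G_1,M_{\varphi_1})+\dim_{\F_p}Z^1(G_2,M_{\varphi_2}).
\]

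Next I would apply Lemma~\ref{lem:counting Z1} to each of the Demushkin groups $G_1$ and $G_2$, with the homomorphisms $\varphi_1\colon G_1\to B$ and $\varphi_2\colon G_2\to B$. That lemma says the $G_1$-summand has $\F_p$-dimension $3d$ if $\im\varphi_1\subseteq\{0\}\times\F_p\times\{0\}$ and $3d-1$ otherwise, and similarly for $G_2$ with $e$ in place of $d$. In case (1) exactly one of the two images lies in $\{0\}\times\F_p\times\{0\}$, so the summand dimensions are $3d$ and $3e-1$ in some order, their sum is $3d+3e-1$, and hence $|Z^1(G,M_\varphi)|=p^{3d+3e-1}$. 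In case (2) neither image lies in $\{0\}\times\F_p\times\{0\}$, so both summands drop by one, the total dimension is $(3d-1)+(3e-1)=3d+3e-2$, and $|Z^1(G,M_\varphi)|=p^{3d+3e-2}$.

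I do not expect a genuine obstacle here: all the cohomological content has already been extracted in Lemma~\ref{lem:counting Z1}, and what remains is bookkeeping. The only points deserving a moment's care are that the dichotomy used in the statement of the present lemma is precisely the one appearing in Lemma~\ref{lem:counting Z1}; that the displayed decomposition of $Z^1$ is additive on dimensions (it is an $\F_p$-linear isomorphism, which is all that is needed for the count); and that each factor $G_i$ has rank at least $3$ so that Lemma~\ref{lem:counting Z1} is applicable. The hypothesis that both $q$-invariants exceed $2$ plays no role in this lemma and will only be needed for the subsequent linear-algebra count.
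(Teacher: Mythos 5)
Your proposal is correct and is essentially the paper's own proof: the paper likewise invokes the decomposition $Z^1(G,M_\varphi)\cong Z^1(G_1,M_{\varphi_1})\oplus Z^1(G_2,M_{\varphi_2})$ from \cite[Proof of Theorem 4.1.4]{NSW} and then applies Lemma~\ref{lem:counting Z1} to each Demushkin factor, with the same case bookkeeping. Your side remarks (additivity of dimensions, irrelevance of the $q$-invariant hypothesis here) are accurate and do not change the argument.
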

\begin{proof} Note that  
\[ \dim_{\F_p} Z^1(G,M_\varphi)=\dim_{\F_p}Z^1(G_1,M_{\varphi_1})+\dim_{\F_p}Z^1(G_2,M_{\varphi_2}),\]
because $ Z^1(G,M_\varphi)\cong Z^1(G_1,M_{\varphi_1})\oplus Z^1(G_2,M_{\varphi_2})$ (see \cite[Proof of Theorem 4.1.4]{NSW}). The lemma then follows from Lemma~\ref{lem:counting Z1}.
\end{proof}
\begin{lem}
\label{lem:general LA DD}
Let $V$ and $W$ be two finite dimensional vector spaces over the finite field $\F_p$ with basis $\{v_1,v_2,\ldots,v_{d}\}$ and $\{w_1,\ldots,w_e\}$ respectively. Let $(\cdot,\cdot)_1, (\cdot,\cdot)_2 \colon (V\oplus W)\times (V\oplus W)\to \F_p$ be two skew-symmetric bilinear forms on $V\oplus W$. Assume that 
\begin{enumerate}
\item[(a)] $(\cdot,\cdot)_1$ is non-degenerate when restricted to $V$ and trivial on $W$; and that
\item[(b)] $(\cdot,\cdot)_2$ is non-degenerate when restricted to $W$ and trivial on $V$; and that
\item[(c)] $(v,w)_1=(v,w)_2=0$ for all $v\in V$, $w\in W$.
\end{enumerate}
Let $N$ be the number of triples $(x,y,z)\in (V\oplus W)^3$ such that $(x,y)_1=(x,y)_2=(y,z)_1=(y,z)_2=0$ and that $x,y,z$ are $\F_p$-linearly independent. The following statement is true.

 If $(v_i,v_i)=0$ for every $1\leq i\leq d$ and $(w_j,w_j)=0$ for every $1\leq j\leq e$, then 
\[N=(p^d+p^e-2)(p^{d+e-1}-p)(p^{d+e-1}-p^2)+(p^d-1)(p^e-1)(p^{d+e-2}-p)(p^{d+e-2}-p^2).\]
\end{lem}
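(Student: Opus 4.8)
The plan is to mimic the counting strategy used in the proofs of Lemma~\ref{lem:LA} and Lemma~\ref{lem:general LA}: for each nonzero $y\in V\oplus W$ I would compute the size of
\[
M(y):=\{(x,z)\in (V\oplus W)^2\mid (x,y)_1=(x,y)_2=(y,z)_1=(y,z)_2=0;\ x,y,z\ \text{$\F_p$-linearly independent}\},
\]
and then obtain $N$ by summing $|M(y)|$ over all $y\neq 0$. The whole argument rests on two simple structural observations, after which only bookkeeping remains.

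First I would describe the joint orthogonal complement. Writing $y=y_V+y_W$ with $y_V\in V$ and $y_W\in W$, the hypotheses that $(\cdot,\cdot)_1$ is trivial on $W$ and on cross terms, and $(\cdot,\cdot)_2$ is trivial on $V$ and on cross terms, give $(x,y)_1=(x_V,y_V)_1$ and $(x,y)_2=(x_W,y_W)_2$. Hence the set $y^\perp$ of all $x$ satisfying the four orthogonality conditions is the direct sum $y_V^{\perp_1}\oplus y_W^{\perp_2}$, where $y_V^{\perp_1}\subseteq V$ and $y_W^{\perp_2}\subseteq W$ are the orthogonal complements for the respective forms. Using non-degeneracy of $(\cdot,\cdot)_1$ on $V$ and of $(\cdot,\cdot)_2$ on $W$,
\[
\dim y^\perp=
\begin{cases}
d+e-2 & \text{if } y_V\neq 0 \text{ and } y_W\neq 0,\\
d+e-1 & \text{if exactly one of } y_V,y_W \text{ is }0.
\end{cases}
\]
The second observation is that, exactly as in the proof of Lemma~\ref{lem:LA}(1), the assumption $(v_i,v_i)=0$ for all $i$ makes $(\cdot,\cdot)_1$ alternating on $V$, so $y_V\in y_V^{\perp_1}$; likewise $y_W\in y_W^{\perp_2}$; therefore $y\in y^\perp$ for every $y$. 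Consequently, to count pairs $(x,z)\in y^\perp\times y^\perp$ with $x,y,z$ independent one may first pick $x\in y^\perp\setminus\F_p y$, which has $p^k-p$ elements where $k=\dim y^\perp$, and then $z\in y^\perp\setminus(\F_p x+\F_p y)$, which has $p^k-p^2$ elements, so $|M(y)|=(p^k-p)(p^k-p^2)$.

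Finally I would split $(V\oplus W)\setminus\{0\}$ into the $p^d-1$ vectors lying in $V$, the $p^e-1$ vectors lying in $W$ (both contributing with $k=d+e-1$), and the $(p^d-1)(p^e-1)$ vectors with both components nonzero (contributing with $k=d+e-2$), and add up:
\[
N=\bigl[(p^d-1)+(p^e-1)\bigr](p^{d+e-1}-p)(p^{d+e-1}-p^2)+(p^d-1)(p^e-1)(p^{d+e-2}-p)(p^{d+e-2}-p^2),
\]
which is the asserted formula. There is no substantive obstacle here: the only point requiring care is correctly partitioning $V\oplus W$ into the three cases according to which components of $y$ vanish, together with the observation that $y$ always lies in its own orthogonal complement under both forms; everything else is routine arithmetic parallel to Lemma~\ref{lem:general LA}(1).
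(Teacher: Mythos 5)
Your proposal is correct and follows essentially the same route as the paper: sum $|M(y)|=(p^{k}-p)(p^{k}-p^{2})$ over nonzero $y$, where $k=\dim y^{\perp}$ is $d+e-1$ when $y$ lies in $V$ or $W$ and $d+e-2$ otherwise, using that $y\in y^{\perp}$ because the basis condition forces both forms to be alternating. Your explicit decomposition $y^{\perp}=y_V^{\perp_1}\oplus y_W^{\perp_2}$ is just a slightly more detailed justification of the dimension count the paper states directly.
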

\begin{proof}
 For each $y\in V\oplus W \setminus \{0\}$, we denote 
\[
 y^\perp =\{ x\in V\oplus W \mid (x,y)_1=(x,y)_2=0\}.
\]
Then $y^\perp$ is an $\F_p$-vector space, and 
\[
\dim y^\perp=
\begin{cases}
\dim V+\dim W-1 = d+e-1 \text{ if } y\in V\text{ or } y\in W,\\
\dim V-1 + \dim W-1 = d+e-2 \text{ if } y\not\in V\cup W.
\end{cases}
\]

We set 
\[
\begin{aligned}
 M(y)&:=\begin{split}&\{(x,z)\in (V\oplus W)^2 \mid (x,y)_1=(x,y)_2=(y,z)_1=(y,z)_2=0; \\
                     &x,y,z  \text{ are $\F_p$-linearly independent}\}
\end{split}\\
&=\{(x,z)\in y^\perp\times y^\perp\mid x,y,z \text{ are $\F_p$-linearly independent}\}.
\end{aligned}
\]

We  can check that $y\in y^\perp$ as in Proof of Lemma~\ref{lem:general LA}.  Thus 
\[
 |M(y)|=
 \begin{cases}
 (p^{d+e-1}-p)(p^{d+e-1}-p^2) \text{  if } y\in V\cup W,\\
  (p^{d+e-2}-p)(p^{d+e-2}-p^2) \text{  if } y\not\in V\cup W.
 \end{cases}
\]
Therefore
\[
\begin{split}
N&=\sum_{y\in V\oplus W\setminus\{0\}} |M(y)|\\
&=(p^d+p^e-2)(p^{d+e-1}-p)(p^{d+e-1}-p^2)+(p^d-1)(p^e-1)(p^{d+e-2}-p)(p^{d+e-2}-p^2),
\end{split}
\]
as desired.
\end{proof}

\begin{prop} Let $G=G_1*G_2$ be the free product of a Demushkin group $G_1$ of rank $d$ and a Demushkin group $G_2$ of rank $e$.  Assume that the $q$-invariants of both $G_1$ and $G_2$ are greater than 2.
Let $N$ be the number as in Lemma~\ref{lem:general LA DD}. Then $|{\rm Epi}(G,\U_4(\F_p))|$ is equal to
\[
\begin{split}
Np^{3d+3e-2}+ [& p^d(p^e-1)(p^e-p)(p^e-p^2)+ p^e(p^d-1)(p^d-p)(p^d-p^2)+ \\
                             &p^{2}(p^d-1)(p^e-1)(p^e-p)+p^{2}(p^e-1)(p^d-1)(p^d-p)](p^{3d+3e-1}-p^{3d+3e-2}).
\end{split}
\]
\end{prop}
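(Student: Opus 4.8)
The plan is to imitate the proof of the preceding proposition, breaking the computation of $|{\rm Epi}(G,\U_4(\F_p))|$ via Proposition~\ref{prop:counting Epi} into three parts: determining ${\rm TMP}(G,\F_p)$, evaluating the fibres $|Z^1(G,M_\varphi)|$, and a counting argument.

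First I would describe ${\rm TMP}(G,\F_p)$. Using the restriction isomorphism $H^1(G,\F_p)=H^1(G_1,\F_p)\oplus H^1(G_2,\F_p)$ (see \cite[Theorem 4.1.4]{NSW}) and writing ${}^ix$ for the $G_i$-component, Lemma~\ref{lem:cup product} shows that $(x,y,z)$ satisfies $x\cup y=y\cup z=0$ in $H^2(G,\F_p)$ exactly when the four restricted cup products ${}^ix\cup{}^iy$ and ${}^iy\cup{}^iz$ ($i=1,2$) all vanish. When they do, $\langle x,y,z\rangle$ is defined and contains $0$: by the vanishing triple Massey product property of each Demushkin group $G_i$ together with Dwyer's dictionary (Remark~\ref{rmk:Dwyer}) there are homomorphisms $\rho_i\colon G_i\to\U_4(\F_p)$ whose near-diagonal entries realise $-{}^ix,-{}^iy,-{}^iz$, and one glues them by the universal property of the free product as in the proofs of Proposition~\ref{prop:free product} and Theorem~\ref{thm: Demushkin-Un}. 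Since both $q$-invariants exceed $2$, Proposition~\ref{prop:Demushkin-linear algebra}(1) provides bases of $H^1(G_1,\F_p)$ and $H^1(G_2,\F_p)$ on which the two cup forms are alternating, so ${\rm TMP}(G,\F_p)$ is precisely the set enumerated in Lemma~\ref{lem:general LA DD}, and $|{\rm TMP}(G,\F_p)|=N$.

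Next, for $\varphi=(x,y,z)\in{\rm TMP}(G,\F_p)$ the value $|Z^1(G,M_\varphi)|$ is given by Lemma~\ref{lem:counting general Z1 DD}. The relevant invariant is whether $\im\varphi_i$ lies in the distinguished line $L:=\{0\}\times\F_p\times\{0\}$, which by inspection of $\varphi_i=({}^ix,{}^iy,{}^iz)$ happens exactly when ${}^ix={}^iz=0$. Since $\varphi$ is surjective (Lemma~\ref{lem:linear independence}) we have $\im\varphi_1+\im\varphi_2=B$, so both images cannot lie in $L$; hence each $\varphi\in{\rm TMP}(G,\F_p)$ contributes $p^{3d+3e-1}$ if exactly one of $\im\varphi_1,\im\varphi_2$ lies in $L$, and $p^{3d+3e-2}$ otherwise. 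Writing $\tilde M$ for the number of $\varphi$ of the first type, Proposition~\ref{prop:counting Epi} gives $|{\rm Epi}(G,\U_4(\F_p))|=\tilde M\,p^{3d+3e-1}+(N-\tilde M)\,p^{3d+3e-2}$, and it remains to check that $\tilde M$ equals the bracketed four-term sum.

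Computing $\tilde M$ is the main task. By the symmetry $G_1\leftrightarrow G_2$ it suffices to count the $\varphi\in{\rm TMP}(G,\F_p)$ with $\im\varphi_1\subseteq L$ and then add the mirror count; the two halves should produce the four terms in pairs. Fixing $\im\varphi_1\subseteq L$ forces $x,z\in H^1(G_2,\F_p)$, and I would subdivide according to whether $\im\varphi_1=0$ (then $y\in H^1(G_2,\F_p)$ too, and one counts configurations living entirely in $G_2$, subject to the $G_2$-cup vanishing and to $\im\varphi_2=B$) or $\im\varphi_1=L$ (then ${}^1y\neq0$ is a free parameter in $H^1(G_1,\F_p)$, while ${}^2y$ must lie in the orthogonal complement of $\langle x,z\rangle$ for the cup form of $G_2$, and surjectivity of $\varphi$ reduces to $\im\varphi_2+L=B$, i.e.\ to $x,z$ being linearly independent). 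Each branch is then an enumeration over $x,z\in H^1(G_2,\F_p)$ and the constrained ${}^2y$, carried out using the non-degeneracy of the cup form on $H^1(G_2,\F_p)$ exactly as in the proofs of Lemma~\ref{lem:general LA DD} and of the preceding proposition. I expect the genuine difficulty to be organisational rather than conceptual: keeping the several subcases and their mirrors disjoint, imposing surjectivity correctly in each, and collapsing the resulting polynomial in $p$ into the stated shape. Substituting $N$ and $\tilde M$ into Proposition~\ref{prop:counting Epi} then yields the claimed formula; formally degenerating $G_2$ to a free pro-$p$-group recovers the preceding proposition, which is a useful consistency check.
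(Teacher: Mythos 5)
Your skeleton is exactly the paper's: identify ${\rm TMP}(G,\F_p)$ with the set counted by Lemma~\ref{lem:general LA DD} (via Lemma~\ref{lem:cup product}, Dwyer's dictionary and gluing over the free product), read off the fibre sizes from Lemma~\ref{lem:counting general Z1 DD}, and write $|{\rm Epi}(G,\U_4(\F_p))|=Np^{3d+3e-2}+\tilde M\,(p^{3d+3e-1}-p^{3d+3e-2})$, where $\tilde M$ is the number of $\varphi\in{\rm TMP}(G,\F_p)$ with exactly one of $\im\varphi_1,\im\varphi_2$ inside $L=\{0\}\times\F_p\times\{0\}$. The genuine gap is the step you only announce: you never compute $\tilde M$, and with the constraints you (correctly) impose the computation does not produce the bracketed four-term sum. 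In your branch $\im\varphi_1=L$ the admissible ${}^2y$ range over $\langle x,z\rangle^\perp\subseteq H^1(G_2,\F_p)$, a space of dimension $e-2$ because the $G_2$-cup form is non-degenerate and $x,z$ are independent, so that branch contributes $(p^d-1)(p^e-1)(p^e-p)\,p^{e-2}$; in your branch $\im\varphi_1=0$ the count is the Demushkin count of Lemma~\ref{lem:LA}(1) applied to $G_2$, namely $(p^e-1)(p^{e-1}-p)(p^{e-1}-p^2)$. By contrast, the portion of the stated bracket corresponding to $\im\varphi_1\subseteq L$ is $(p^e-1)(p^e-p)(p^e-p^2)+(p^d-1)(p^e-1)(p^e-p)\,p^{e}$, i.e.\ what one gets when no cup-product condition at all is imposed on the factor carrying $x$ and $z$ (as if that factor were free). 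This is precisely how the paper's own proof reaches the bracket: its cases $\im\varphi_1=0$, $\im\varphi_1=L$ with $\im\varphi_2=B$, and $\im\varphi_2+L=B$ are counted purely by surjectivity and linear independence — e.g.\ in its Subcase 3.2 it allows all $p^2$ choices $y_2=\lambda x_2+\mu z_2$ without requiring $\lambda\,(x_2\cup z_2)=\mu\,(x_2\cup z_2)=0$ — whereas membership in ${\rm TMP}(G,\F_p)$ requires $x_2\cup y_2=y_2\cup z_2=0$ in $H^2(G_2,\F_p)$, exactly the conditions your set-up keeps.

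So the proposal cannot be completed as written: your (correct) translation of the TMP conditions on the $G_2$-side is incompatible with the target bracket, and the sentence ``it remains to check that $\tilde M$ equals the bracketed four-term sum'' is not an organisational chore but the point where the argument breaks. Carrying out your enumeration gives $\tilde M=(p^e-1)(p^{e-1}-p)(p^{e-1}-p^2)+(p^d-1)(p^e-1)(p^e-p)p^{e-2}$ plus the mirror terms, which differs from the bracket already in the first factor whenever $e\geq 3$. You must either exhibit an error in this enumeration (I do not see one) or conclude that the formula you are asked to prove needs the cup-vanishing conditions reinstated; note also that your proposed consistency check, degenerating $G_2$ to a free pro-$p$-group, cannot detect this, because for a free factor those conditions are vacuous and both counts coincide.
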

\begin{proof}
Let $\varphi\in {\rm TMP}(G,\F_p)$. We also consider $\varphi$ as a surjective homomorphism $G\to B$. Let $\varphi_1\colon G_1\to B$ be the composition $G_1\to G_1*G_2\stackrel{\varphi}{\to} B$ . We define $\varphi_2\colon G_2\to B$ similarly. Note that $\varphi$ is surjective if and only if $\im \varphi_1+\im\varphi_2=B$.
\\
\\
{\bf Case 1}: $\im\varphi_1=0\in B$. Then $\varphi$ is surjective if and only $\varphi_2$ is surjective. The number of $\varphi$'s in ${\rm TMP}(G,\F_p)$ with $\im\varphi_1=0$ is
\[
(p^e-1)(p^e-p)(p^e-p^2).
\]
\\
\\
{\bf Case 2}: $\im\varphi_2=0\in B$.  The number of $\varphi$'s in ${\rm TMP}(G,\F_p)$ with $\im\varphi_2=0$ is
\[
(p^d-1)(p^d-p)(p^d-p^2).
\]
\\
\\
{\bf Case 3}: $\im\varphi_1=\{0\}\times \F_p\times \{0\}$. 

{\bf Subcase 3.1}: $\im \varphi_2=B$. The number of such $\varphi$'s in ${\rm TMP}(G,\F_p)$  in this subcase is 
\[
(p^d-1)(p^e-1)(p^e-p)(p^e-p^2). 
\]

{\bf Subcase 3.2:} $\im \varphi_2\not=B$ and $\im\varphi_2+\{0\} \times \F_p \times \{0\}=B$. Then there exist $\lambda,\mu\in \F_p$ such that $\im\varphi_2=\{(a,\lambda a+\mu b,b)\mid a,b\in \F_p\}$. 
The number of such $\varphi$'s in ${\rm TMP}(G,\F_p)$  in this subcase is 
\[
(p^d-1)p^2(p^e-1)(p^e-p).
\]
\\
\\
{\bf Case 4}: $\im\varphi_2=\{0\}\times \F_p\times \{0\}$. 

{\bf Subcase 3.1}: $\im \varphi_1=B$. The number of such $\varphi$'s in ${\rm TMP}(G,\F_p)$  in this subcase is 
\[
(p^e-1)(p^d-1)(p^d-p)(p^d-p^2). 
\]

{\bf Subcase 3.2:} $\im \varphi_1\not=B$ and $\im\varphi_1+\{0\} \times \F_p \times \{0\}=B$. Then there exist $\lambda,\mu\in \F_p$ such that $\im\varphi_1=\{(a,\lambda a+\mu b,b)\mid a,b\in \F_p\}$. 
The number of such $\varphi$'s in ${\rm TMP}(G,\F_p)$  in this subcase is 
\[
(p^e-1)p^2(p^d-1)(p^d-p).
\]
\\
\\
{\bf Case 4}: $\im\varphi_1$ and $\im\varphi_2$ are not contained in $\{0\}\times \F_p\times \{0\}$. The number of such $\varphi$'s in ${\rm TMP}(G,\F_p)$ in this case is just 
\[
N-(\text{ the sum of those numbers found in previous cases}),
\]
where $N=|{\rm TMP}(G,\F_p)|$, which is computed as in Lemma~\ref{lem:general LA DD}.

Combining with Proposition \ref{prop:counting Epi} and Lemma~\ref{lem:counting general Z1 DD}, we obtain the result.
\end{proof}

\end{document}